\newtheorem{definition}{Definition}[section]
\newtheorem{theorem}[definition]{Theorem}
\newtheorem{lemma}[definition]{Lemma}
\newtheorem{corollary}[definition]{Corollary}
\newtheorem{example}[definition]{Example}
\newtheorem{proposition}[definition]{Proposition}
\begin{document}
\title{\bf Using Catalan words and a $q$-shuffle \\algebra to
describe 
the Beck PBW\\ basis for the positive part
of  $U_q(\widehat{\mathfrak{sl}}_2)$
}
 \author{
Paul Terwilliger 
}
\date{}

\maketitle
\begin{abstract} We consider the positive part $U^+_q$ of the quantized  enveloping algebra
$U_q(\widehat{\mathfrak{sl}}_2)$. The algebra $U^+_q$ has a presentation involving two generators and two relations, called the $q$-Serre relations.
There is a PBW basis for $U^+_q$ due to Damiani, and a PBW basis for $U^+_q$ due to Beck.
In 2019 we used Catalan words and a $q$-shuffle algebra to express 
the Damiani PBW basis in closed form. In this paper we use a similar approach to express the Beck PBW basis in closed form. We also consider how the Damiani PBW basis and the Beck PBW basis
are  related to the alternating PBW basis for $U^+_q$.
\bigskip

\noindent
{\bf Keywords}.  Catalan word, $q$-shuffle algebra, PBW basis; $q$-Serre relations.
\hfil\break
\noindent {\bf 2020 Mathematics Subject Classification}. 
Primary: 17B37. Secondary: 05E14, 81R50.

 \end{abstract}

 \section{Introduction}
 The quantized enveloping algebra $U_q(\widehat{\mathfrak{sl}}_2)$ appears  in representation theory \cite{charp}, statistical mechanics  \cite{hongkang, JM}, combinatorics \cite{TD00, drg, compactUqp},
 and the theory of tridiagonal pairs \cite{uqsl2hat, shape, nonnil,TwoRel, ItoTer}. In the present paper we consider a well known subalgebra $U^+_q$ of  $U_q(\widehat{\mathfrak{sl}}_2)$,
called the positive part \cite{baspp,
beck,bcp,
lusztig, pospart, xxz}.
 The algebra $U^+_q$  has a presentation involving
two generators $A$, $B$ and two relations, called the $q$-Serre relations:
 \begin{align*}
\lbrack A, \lbrack A, \lbrack A, B\rbrack_q \rbrack_{q^{-1}} \rbrack =0, \qquad \qquad 
\lbrack B, \lbrack B, \lbrack B, A \rbrack_q \rbrack_{q^{-1}}\rbrack = 0.
\end{align*}
In \cite{damiani} I. Damiani obtained a PBW basis for 
 $U^+_q$,
consisting of
some elements $\lbrace E_{n\delta+\alpha_0}\rbrace_{n=0}^\infty$,
$\lbrace E_{n\delta+\alpha_1}\rbrace_{n=0}^\infty$,
$\lbrace E_{n\delta}\rbrace_{n=1}^\infty$ that are
 defined recursively. In \cite{catalan} we expressed these elements in closed form, using Catalan words and
 a $q$-shuffle algebra. In Section 6 we will review this result in detail, and for now give a brief summary.
Start with a free associative algebra $\mathbb V$ on two generators $x,y$. These generators are called letters.
For an integer $n\geq 0$, a word of length $n$ in $\mathbb V$ is a product of letters $u_1 u_2 \cdots u_n$.
The vector space 
$\mathbb V$ has a basis consisting of its words; this basis is called standard.
In \cite{rosso1, rosso} M. Rosso introduced 
an associative algebra structure on $\mathbb V$, called a
$q$-shuffle algebra.
For letters $u,v$ their 
$q$-shuffle product is
$u\star v = uv+q^{( u,v) }vu$, where
$( u,v)=2$ 
(resp. $(u,v) =-2$) 
if $u=v$ (resp. 
 $u\not=v$).
 In \cite[Theorem~15]{rosso} Rosso gave an injective algebra homomorphism
$\natural$ from $U^+_q$ into the $q$-shuffle algebra
${\mathbb V}$, that sends $A\mapsto x$ and $B\mapsto y$.
In \cite{catalan} we applied $\natural $ to the Damiani PBW basis,
and expressed the image in the standard basis for $\mathbb V$.
This image involves words of the following type.
Define $\overline x = 1$
and $\overline y = -1$.
A word $u_1u_2\cdots u_n$ in $\mathbb V$ 
is said to be Catalan whenever
$\overline u_1+
\overline u_2+\cdots  + 
\overline u_i$ is nonnegative for 
$1 \leq i \leq n-1$ and zero for $i=n$.
In this case $n$ is even.
For $n\geq 0$ define
\begin{align*}
C_n =
  \sum u_1u_2\cdots u_{2n}
\lbrack 1\rbrack_q
\lbrack 1+\overline u_1\rbrack_q
\lbrack 1+\overline u_1+\overline u_2\rbrack_q
\cdots
\lbrack 1+\overline u_1+\overline u_2+ \cdots +\overline u_{2n}\rbrack_q,
\end{align*}
where the sum is over all the Catalan words $u_1 u_2 \cdots u_{2n}$
in $\mathbb V$ that have length $2n$. In \cite[Theorem~1.7]{catalan} we showed that the map
$\natural$ sends
\begin{align*}
E_{n\delta+\alpha_0} \mapsto  q^{-2n}(q-q^{-1})^{2n} xC_n, \qquad \quad 
E_{n\delta+\alpha_1} \mapsto
 q^{-2n}(q-q^{-1})^{2n} C_ny
 \end{align*}
 for $n\geq 0$, and
 \begin{align*}
 E_{n\delta} \mapsto  -q^{-2n}(q-q^{-1})^{2n-1} C_n
 \end{align*}
 for $n\geq 1$. In \cite[Proposition~6.1]{beck} J. Beck obtained a  PBW basis for $U^+_q$ by adjusting the Damiani PBW basis as follows.
 The elements $\lbrace E_{n\delta}\rbrace_{n=1}^\infty$ are replaced by some elements
 $\lbrace E^{\rm Beck}_{n \delta} \rbrace_{n=1}^\infty$ that satisfy the generating function identity below, see \cite[p.~6]{bcp}. 
 Referring
 to the exponential function and an indeterminate $t$,
 \begin{align*}
{\rm exp}  \Biggl((q-q^{-1}) \sum_{k=1}^\infty E^{\rm Beck}_{k\delta}  t^k \Biggr) = 1 - (q-q^{-1}) \sum_{k=1}^\infty E_{k\delta}  t^k.
\end{align*}
The main result of the present paper is that $\natural$ sends
  \begin{align*}
E^{\rm Beck}_{n\delta} \mapsto \frac{\lbrack 2n \rbrack_q}{n} q^{-2n} (q-q^{-1})^{2n-1} x C_{n-1} y
 \end{align*}
 for $n\geq 1$. In the above line, the notation $x C_{n-1}y$ refers to the free product.
 \medskip
 
 \noindent
We use our main result to obtain a number of corollaries and subsidiary results. For instance, we show that the following holds in the $q$-shuffle algebra $\mathbb V$:
\begin{align*}
&{\rm exp}  \Biggl( \sum_{k=1}^\infty \frac{ \lbrack 2 k \rbrack_q }{k} x C_{k-1} y t^k \Biggr) = 1 + \sum_{k=1}^\infty C_k t^k.   
\end{align*}
\noindent In the above line, the exponential function is with respect to the $q$-shuffle product, and the notation $xC_{k-1}y$ refers to the free product.
\medskip

\noindent 
In \cite{alternating} we introduced the alternating words in $\mathbb V$, and used them to obtain the alternating PBW basis for $U^+_q$ \cite[Theorem~10.1]{alternating}.
The following words are alternating:
\begin{align*}
&\tilde G_{1} = xy, \qquad \tilde G_{2} =
xyxy,\qquad \tilde G_3 = xyxyxy, \qquad \ldots
\end{align*}
Using  our main result and \cite[Proposition~11.8]{alternating}, we show that the following holds in the $q$-shuffle algebra $\mathbb V$:
\begin{align*}
&{\rm exp}  \Biggl(-\sum_{k=1}^\infty \frac{ (-1)^k \lbrack  k \rbrack_q }{k} x C_{k-1} y t^k \Biggr) = 1+ \sum_{k=1}^\infty \tilde G_k t^k.
\end{align*}
\noindent In the above line, the exponential function is with respect to the $q$-shuffle product, and the notation $xC_{k-1}y$ refers to the free product.
 \medskip

 \noindent The paper is organized as follows. Section 2 contains some preliminaries.
 In Section 3 we recall the algebra $U^+_q$. In Section 4, we review the PBW bases for $U^+_q$ due to Damiani and Beck.
 In Sections 5, 6 we review the embedding of $U^+_q$ into the $q$-shuffle algebra $\mathbb V$.
 Sections 7, 8 contain our main result and some corollaries.
 In Section 9, we apply our main result to the alternating words in $\mathbb V$.
 In Appendix A we give some examples that illustrate certain results from the main body of the paper.

 \section{Preliminaries} We now begin our formal argument. 
 Recall the natural numbers $\mathbb N = \lbrace 0,1,2,\ldots \rbrace$.
 Let $\mathbb F$ denote a field with characteristic zero. Throughout this paper, every  vector space we discuss is  over $\mathbb F$.
 Every algebra we discuss is  associative, over $\mathbb F$, and has a multiplicative identity. A subalgebra has the same multiplicative identity
 as the parent algebra.
  \begin{definition}\label{def:pbw}
 \rm 
(See \cite[p.~299]{damiani}.)
Let $ \mathcal A$ denote an algebra. A {\it Poincar\'e-Birkhoff-Witt} (or {\it PBW}) basis for $\mathcal A$
consists of a subset $\Omega \subseteq \mathcal A$ and a linear order $<$ on $\Omega$
such that the following is a basis for the vector space $\mathcal A$:
\begin{align*}
a_1 a_2 \cdots a_n \qquad n \in \mathbb N, \qquad a_1, a_2, \ldots, a_n \in \Omega, \qquad
a_1 \leq a_2 \leq \cdots \leq a_n.
\end{align*}
We interpret the empty product as the multiplicative identity in $\mathcal A$.
\end{definition}
 
 \noindent We will be discussing generating functions. Let $\mathcal A$ denote an algebra and let $t$ denote an indeterminate.
For a sequence $\lbrace a_k \rbrace_{k \in \mathbb N}$ of elements in $\mathcal A$, the corresponding generating function is
\begin{align*}
 a(t) = \sum_{k \in \mathbb N} a_k t^k.
 \end{align*} 
 The above sum is formal; issues of convergence are not considered.
 We call $a(t)$ the {\it generating function over $\mathcal A$ with coefficients $\lbrace a_k \rbrace_{k \in \mathbb N}$}. The coefficient $a_0$ is called the {\it constant coefficient}.
  For generating functions
 $a(t)=\sum_{k \in \mathbb N} a_k t^k$ and
 $b(t) = \sum_{k \in \mathbb N} b_k t^k$ over $\mathcal A$, their product $a(t)b(t)$ is the generating function $\sum_{k \in \mathbb N}c_k t^k$  such that 
 $c_k= \sum_{i=0}^k a_i b_{k-i}$ for $k\in \mathbb N$.
 The set of generating functions over $\mathcal A$ forms an algebra.
 Consider a generating function $a(t) = \sum_{k=1}^\infty a_k t^k$ over $\mathcal A$ with constant coefficient 0. Then the exponential 
 \begin{align*}
 {\rm exp}\, a(t) = \sum_{n\in \mathbb N} \frac{\bigl(a(t)\bigr)^n}{n!}
 \end{align*}
 is a generating function over $\mathcal A$ with constant coefficient 1. Moreover
 the natural logarithm 
 \begin{align*}
 {\rm ln}\bigl(1+ a(t)\bigr) = \sum_{n\in \mathbb N} \frac{(-1)^n \bigl(a(t)\bigr)^{n+1}}{n+1}
 \end{align*}
 is a generating function over $\mathcal A$ with constant coefficient 0. 
 Let  $a(t)=\sum_{k=1}^\infty a_k t^k$ and $b(t)=\sum_{k=1}^\infty b_kt^k$ denote generating functions over $\mathcal A$ that have constant coefficient 0. Then ${\rm exp}\, a(t) = 1+ b(t)$ if and only if  $a(t) = {\rm ln} \bigl(1+ b(t)\bigr)$.

 \begin{definition}\label{def:gr}\rm 
 A  {\it grading} of an algebra $\mathcal A$ is a sequence  $\lbrace \mathcal A_n \rbrace_{n \in \mathbb N}$ of subspaces of $\mathcal A$ such that
(i) $1 \in \mathcal A_0$; (ii) the sum $\mathcal A = \sum_{n \in \mathbb N} \mathcal A_n$ is direct; (iii) $\mathcal A_r \mathcal A_s \subseteq \mathcal A_{r+s} $ for $r,s\in \mathbb N$.
For $n \in \mathbb N$ the subspace $\mathcal A_n$ is called the {\it $n$-homogeneous component} of the grading.
\end{definition}

  \noindent Throughout the paper, fix a nonzero $q \in \mathbb F$
that is not a root of unity.
Recall the notation
\begin{align*}
\lbrack n\rbrack_q = \frac{q^n-q^{-n}}{q-q^{-1}}
\qquad \qquad n \in \mathbb N.
\end{align*}

\section{The algebra $U^+_q$}
In this section we recall the algebra $ U^+_q$.
\medskip

\noindent For elements $X, Y$ in any algebra, define their
commutator and $q$-commutator by 
\begin{align*}
\lbrack X, Y \rbrack = XY-YX, \qquad \qquad
\lbrack X, Y \rbrack_q = q XY- q^{-1}YX.
\end{align*}
\noindent Note that 
\begin{align}
\label{eq:qs}
\lbrack X, \lbrack X, \lbrack X, Y\rbrack_q \rbrack_{q^{-1}} \rbrack
= 
X^3Y-\lbrack 3\rbrack_q X^2YX+ 
\lbrack 3\rbrack_q XYX^2 -YX^3.
\end{align}

\begin{definition} \label{def:U} \rm
(See \cite[Corollary~3.2.6]{lusztig}.) 
Define the algebra $U^+_q$ by generators $A$, $B$ and relations
\begin{align}
\label{eq:qOns1}
&\lbrack A, \lbrack A, \lbrack A, B\rbrack_q \rbrack_{q^{-1}} \rbrack =0,
\\
\label{eq:qOns2}
&\lbrack B, \lbrack B, \lbrack B, A\rbrack_q \rbrack_{q^{-1}}\rbrack = 0.
\end{align}
We call $U^+_q$ the {\it positive part of $U_q(\widehat{\mathfrak{sl}}_2)$}.
The relations \eqref{eq:qOns1}, \eqref{eq:qOns2}  are called the {\it $q$-Serre relations}.
\end{definition}

\noindent For the moment abbreviate $U^+= U^+_q$. Since the $q$-Serre relations are homogeneous, the algebra $U^+$ has a grading
$\lbrace U^+_n \rbrace_{n \in \mathbb N}$ with the following property: for $n \in \mathbb N$ the subspace $U^+_n$ is spanned by the products $g_1 g_2 \cdots g_n$
such that $g_i$ is among $A$, $B$ for $1 \leq i \leq n$. In particular $U^+_0 = \mathbb F 1$ and $U^+_1$ is spanned by $A, B$.

\section{Two PBW bases for $U^+_q$}

\noindent In \cite{damiani},  Damiani obtained a PBW basis for $U^+_q$ that involves some elements
\begin{align}
\lbrace E_{n \delta+ \alpha_0} \rbrace_{n=0}^\infty,
\qquad \quad 
\lbrace E_{n \delta+ \alpha_1} \rbrace_{n=0}^\infty,
\qquad \quad 
\lbrace E_{n \delta} \rbrace_{n=1}^\infty.
\label{eq:Upbw}
\end{align}
These elements are recursively defined  as follows.  
\begin{align}
E_{\alpha_0} = A, \qquad \qquad
E_{\alpha_1} = B, \qquad \qquad
E_{\delta} = q^{-2}BA-AB
\label{eq:BAalt}
\end{align}
and for $n\geq 1$,
\begin{align}
&
E_{n \delta+\alpha_0} =
\frac{
\lbrack E_\delta, E_{(n-1)\delta+ \alpha_0} \rbrack
}
{q+q^{-1}},
\qquad \qquad
E_{n \delta+\alpha_1} =
 \frac{
 \lbrack
 E_{(n-1)\delta+ \alpha_1},
 E_\delta
 \rbrack
 }
 {q+q^{-1}},
 \label{eq:dam1introalt}
 \\
 &
 \qquad \qquad
 E_{n \delta} =
 q^{-2}  E_{(n-1)\delta+\alpha_1} A
 - A E_{(n-1)\delta+\alpha_1}.
 \label{eq:dam2introalt}
\end{align}


\begin{proposition}
    \label{prop:PBWbasis}
    {\rm (See \cite[p.~308]{damiani}.)}
A PBW basis for $U^+_q$ is obtained by the elements
{\rm \eqref{eq:Upbw}}
  in the 
     linear
    order
 \begin{align*}
 E_{\alpha_0} < E_{\delta+\alpha_0} <
  E_{2\delta+\alpha_0}
  < \cdots
  <
   E_{\delta} < E_{2\delta}
    < E_{3\delta}
 < \cdots
  <
   E_{2\delta + \alpha_1} <
     E_{\delta + \alpha_1} < E_{\alpha_1}.
     \end{align*}
   \end{proposition}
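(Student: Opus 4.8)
The plan is to prove the two defining features of a PBW basis separately: that the ordered monomials in the elements \eqref{eq:Upbw} span $U^+_q$, and that they are linearly independent. As bookkeeping, I would first refine the grading $\{U^+_n\}_{n\in\mathbb N}$ of Section 3 to the finer weight grading in which the homogeneous component $U^+_{(r,s)}$ is spanned by the words in $A,B$ using $A$ exactly $r$ times and $B$ exactly $s$ times; each $U^+_{(r,s)}$ is finite-dimensional, and every root vector in \eqref{eq:Upbw} is homogeneous with $E_{n\delta+\alpha_0}$ of weight $(n+1,n)$, $E_{n\delta}$ of weight $(n,n)$, and $E_{n\delta+\alpha_1}$ of weight $(n,n+1)$, as one reads off from \eqref{eq:BAalt}. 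Each ordered PBW monomial then lies in a single weight component, so spanning and independence can both be settled weight-by-weight.

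For spanning, I would derive the straightening relations among the root vectors from the recursions \eqref{eq:BAalt}--\eqref{eq:dam2introalt}: the commutators $[E_{m\delta+\alpha_0},E_{n\delta+\alpha_0}]$ and $[E_{m\delta+\alpha_1},E_{n\delta+\alpha_1}]$, the mutual commutativity of the imaginary root vectors $\{E_{n\delta}\}_{n=1}^\infty$, and the cross relations expressing $[E_{m\delta},E_{n\delta+\alpha_0}]$, $[E_{n\delta+\alpha_0},E_{m\delta+\alpha_1}]$, and the like as linear combinations of ordered monomials of the same weight. Granting these, any product of root vectors is rewritten into the prescribed order $E_{\alpha_0}<\cdots<E_\delta<\cdots<E_{\alpha_1}$ by an induction on weight in which each application of a straightening relation either sorts an inversion or passes to strictly lower factors; since $A=E_{\alpha_0}$ and $B=E_{\alpha_1}$ generate $U^+_q$, the ordered monomials then span.

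For independence I would run a dimension count. Counting ordered PBW monomials by weight yields the generating function
\[
\prod_{n\ge 0}\frac{1}{1-a^{\,n+1}b^{\,n}}\;\prod_{n\ge 1}\frac{1}{1-a^{\,n}b^{\,n}}\;\prod_{n\ge 0}\frac{1}{1-a^{\,n}b^{\,n+1}},
\]
one geometric factor per root vector (each may be repeated since the order is $\le$), where $a,b$ track the $A$- and $B$-degrees. I would then compute $\sum_{r,s}(\dim U^+_{(r,s)})\,a^r b^s$ independently: because the $q$-Serre relations are a flat deformation of the classical Serre relations, $\dim U^+_{(r,s)}$ equals the corresponding dimension of the classical $U(\mathfrak n^+)$ for $\widehat{\mathfrak{sl}}_2$, whose Hilbert series is the product over the positive roots $n\delta+\alpha_0$, $n\delta$, $n\delta+\alpha_1$ by the classical PBW theorem. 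Under $e^{\alpha_0}=a$, $e^{\alpha_1}=b$ this is exactly the displayed product, so the two generating functions agree; since the ordered monomials already span the finite-dimensional $U^+_{(r,s)}$, equality of dimensions forces independence and the proof is complete.

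The hard part will be the straightening relations involving the imaginary root vectors $\{E_{n\delta}\}$: proving they commute with one another and controlling their commutators with the real root vectors $E_{n\delta+\alpha_0}$ and $E_{n\delta+\alpha_1}$. This is the delicate technical core of Damiani's construction, and it is what must be extracted carefully from the recursions. An alternative that sidesteps this bookkeeping for the independence half is to use Rosso's injective $q$-shuffle embedding $\natural$ \cite{rosso}: once the root vectors are expanded in the standard basis of $\mathbb V$ one checks directly that the images of the ordered monomials are linearly independent there, which is precisely the route our later closed-form Catalan-word formulas make transparent; this machinery is only developed in Sections 5--6, so at the present stage I would invoke \cite{damiani} for the relations and conclude via the dimension count above.
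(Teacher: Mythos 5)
The paper does not prove this proposition at all: it is quoted verbatim from Damiani \cite[p.~308]{damiani}, so the ``paper's own proof'' is a citation. Measured against that, your sketch is a reasonable reconstruction of the standard template (weight grading, straightening to get spanning, character count to get independence), which is indeed the shape of Damiani's argument. But as a proof it has two gaps, one acknowledged and one not.

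The acknowledged gap is the straightening relations involving the imaginary root vectors: commutativity of the $E_{n\delta}$ and the commutators $[E_{m\delta},E_{n\delta+\alpha_i}]$, $[E_{m\delta+\alpha_0},E_{n\delta+\alpha_1}]$. These occupy essentially all of pp.~300--307 of \cite{damiani}; deferring them defers the entire content of the spanning half. The unacknowledged and more serious gap is in the independence half. The assertion that $\dim U^+_{(r,s)}$ equals the classical dimension ``because the $q$-Serre relations are a flat deformation of the classical Serre relations'' is not an argument: semicontinuity of the rank of the graded pieces of the ideal generated by the relations only yields $\dim U^+_{(r,s)}\le\dim U(\mathfrak n^+)_{(r,s)}$ for generic $q$, which is the same inequality your spanning argument already provides. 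Independence requires the reverse inequality, and that is precisely the deep input (Lusztig's nondegeneracy theorem for the bilinear form on $\mathbf f$, equivalently the quantum Gabber--Kac theorem, or Rosso's injectivity of $\natural$). As written, the dimension count is circular. Your fallback --- checking independence of the images of the ordered monomials under $\natural$ in $\mathbb V$ --- is a legitimate route consistent with the tools this paper develops, but it still requires an actual verification of independence in the shuffle algebra, which you do not supply; so the honest conclusion is the one the paper itself draws, namely to cite \cite{damiani} for the full statement.
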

\noindent The PBW basis in   Proposition  \ref{prop:PBWbasis} will be called the {\it Damiani PBW basis}.

\begin{lemma}\label{lem:Ugr} For the grading $\lbrace U^+_n \rbrace_{n \in \mathbb N}$ of $U^+=U^+_q$, 
we have
$E_{n\delta+\alpha_0}, E_{n\delta+\alpha_1}\in U^+_{2n+1}$ for $n \geq 0$ and
$E_{n \delta}\in U^+_{2n}$
for $n \geq 1$.
\end{lemma}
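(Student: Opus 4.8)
The plan is to prove the grading statement by induction, exploiting the fact that the recursive definitions in \eqref{eq:BAalt}--\eqref{eq:dam2introalt} are built entirely from the generators $A$, $B$ together with the operations of multiplication and the commutator/$q$-commutator, all of which respect the grading. Recall from the discussion after Definition~\ref{def:U} that the homogeneous component $U^+_n$ is spanned by products of $n$ generators, so in particular $A, B \in U^+_1$; moreover property (iii) of Definition~\ref{def:gr} gives $U^+_r U^+_s \subseteq U^+_{r+s}$, and since the commutator $[X,Y]=XY-YX$ and the $q$-commutator $[X,Y]_q = qXY - q^{-1}YX$ are $\mathbb F$-linear combinations of $XY$ and $YX$, we have $[X,Y],[X,Y]_q \in U^+_{r+s}$ whenever $X \in U^+_r$ and $Y \in U^+_s$.

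First I would dispose of the base cases directly from \eqref{eq:BAalt}: the elements $E_{\alpha_0}=A$ and $E_{\alpha_1}=B$ lie in $U^+_1 = U^+_{2\cdot 0 + 1}$, matching the claim for $n=0$, and $E_\delta = q^{-2}BA - AB$ lies in $U^+_2 = U^+_{2\cdot 1}$ since it is a linear combination of products of two generators, matching the claim for $n=1$. Next I would set up a single simultaneous induction on $n$ that simultaneously tracks all three families. The inductive hypothesis for a given $n-1$ asserts $E_{(n-1)\delta+\alpha_0}, E_{(n-1)\delta+\alpha_1} \in U^+_{2n-1}$ and (for $n-1 \geq 1$) $E_{(n-1)\delta} \in U^+_{2n-2}$.

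For the inductive step I would read off the degrees from the recursions. From \eqref{eq:dam1introalt}, $E_{n\delta+\alpha_0}$ is $(q+q^{-1})^{-1}$ times the commutator of $E_\delta \in U^+_2$ with $E_{(n-1)\delta+\alpha_0} \in U^+_{2n-1}$, hence lies in $U^+_{2+(2n-1)} = U^+_{2n+1}$, as required; the argument for $E_{n\delta+\alpha_1}$ is identical. From \eqref{eq:dam2introalt}, $E_{n\delta} = q^{-2}E_{(n-1)\delta+\alpha_1}A - A E_{(n-1)\delta+\alpha_1}$ is a linear combination of products of an element of $U^+_{2n-1}$ with the element $A \in U^+_1$, hence lies in $U^+_{2n-1+1} = U^+_{2n}$, as required. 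This closes the induction and proves every assertion of the lemma.

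I do not expect any genuine obstacle here, since the result is a bookkeeping consequence of homogeneity; the only point requiring care is to organize the three families into one coherent induction so that the degree of $E_{n\delta}$ can be derived from $E_{(n-1)\delta+\alpha_1}$, and the degrees of $E_{n\delta+\alpha_0}$, $E_{n\delta+\alpha_1}$ from $E_\delta$ and their predecessors, all at the same stage. One should also note explicitly that the scalars $q+q^{-1}$, $q^{-2}$ are nonzero (guaranteed since $q$ is nonzero and not a root of unity), so that dividing by $q+q^{-1}$ keeps the element inside the same homogeneous component rather than altering its degree.
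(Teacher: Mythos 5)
Your proof is correct and is exactly what the paper's one-line proof (``Use \eqref{eq:BAalt}--\eqref{eq:dam2introalt}'') intends: an induction on $n$ reading off degrees from the defining recursions, using that the grading satisfies $U^+_r U^+_s \subseteq U^+_{r+s}$ and that $A,B \in U^+_1$. You have simply spelled out the bookkeeping that the paper leaves to the reader.
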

\begin{proof} Use \eqref{eq:BAalt}--\eqref{eq:dam2introalt}.
 \end{proof}

\noindent 
 Next we recall some relations satisfied by the elements of the Damiani PBW basis.
\medskip

\begin{lemma}\label{lem:mc} {\rm (See \cite[p.~307]{damiani}.)} The elements $\lbrace E_{n\delta}\rbrace_{n=1}^\infty$ mutually
commute.
\end{lemma}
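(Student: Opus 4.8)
The cleanest reduction is to transport the statement into the $q$-shuffle algebra via Rosso's embedding. Since $\natural$ is an injective algebra homomorphism (see \cite{rosso}), two elements of $U^+_q$ commute if and only if their images commute under the $q$-shuffle product $\star$. By the closed forms recalled in the Introduction (from \cite{catalan}), the images $\natural(E_{m\delta})$ and $\natural(E_{n\delta})$ are scalar multiples of $C_m$ and $C_n$, so the assertion is equivalent to $C_m \star C_n = C_n \star C_m$ for all $m,n\geq 1$. This reformulation is attractive, but one must prove the shuffle identity by an argument \emph{independent} of the exponential generating-function identities stated in the Introduction, since those are consequences of the present paper's main theorem and invoking them here would be circular.

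Absent that shortcut, I would give a self-contained proof following Damiani's strategy, working entirely from the recursions \eqref{eq:BAalt}--\eqref{eq:dam2introalt} and using Lemma \ref{lem:Ugr} to track homogeneous degrees. The plan is a simultaneous induction on total degree that establishes a whole package of Drinfeld-type commutation relations among the three families $E_{n\delta+\alpha_0}$, $E_{n\delta+\alpha_1}$, $E_{n\delta}$, with the mutual commutativity of the imaginary root vectors $E_{n\delta}$ emerging as the final and deepest output. Two relations are immediate from the recursions and seed the induction: from $E_{n\delta+\alpha_0}=\lbrack E_\delta, E_{(n-1)\delta+\alpha_0}\rbrack/(q+q^{-1})$ one reads off $\lbrack E_\delta, A\rbrack=(q+q^{-1})E_{\delta+\alpha_0}$ and more generally $\lbrack E_\delta, E_{(n-1)\delta+\alpha_0}\rbrack=(q+q^{-1})E_{n\delta+\alpha_0}$, and similarly $\lbrack E_{(n-1)\delta+\alpha_1}, E_\delta\rbrack=(q+q^{-1})E_{n\delta+\alpha_1}$. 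Grading then confines each bracket $\lbrack E_{m\delta}, E_{n\delta}\rbrack$ to the homogeneous component $U^+_{2m+2n}$, which restricts the terms that can appear on the right-hand side and makes the induction tractable.

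The main obstacle is the cascade triggered when one tries to reduce $\lbrack E_{m\delta}, E_{n\delta}\rbrack$ directly. Expanding $E_{n\delta}=q^{-2}E_{(n-1)\delta+\alpha_1}A - A E_{(n-1)\delta+\alpha_1}$ and applying the Leibniz rule forces in the auxiliary brackets $\lbrack E_{m\delta}, E_{(n-1)\delta+\alpha_1}\rbrack$, $\lbrack E_{m\delta}, A\rbrack$, together with mixed brackets $\lbrack E_{k\delta+\alpha_0}, E_{l\delta+\alpha_1}\rbrack$ produced along the way; none of these can be disposed of in isolation, so all of them must be carried as simultaneous inductive hypotheses and their scalar coefficients controlled in lockstep. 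The crux is verifying that, once every term is rewritten via the lower-degree relations, the resulting expression collapses to zero: the nontrivial signs and the factors $q^{\pm 2}$, $q+q^{-1}$, and $\lbrack k\rbrack_q$ must cancel exactly. This bookkeeping is precisely the technical heart of \cite[p.~307]{damiani}; I would organize it by fixing the relation package, ordering the induction by $m+n$, and treating the base cases $\lbrack E_\delta, E_{n\delta}\rbrack=0$ first, as these already exhibit the full pattern of cancellation that the general step reproduces.
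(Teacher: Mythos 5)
This lemma is not proved in the paper at all: it is imported from Damiani \cite[p.~307]{damiani} as a known fact, so there is no in-paper argument to compare yours against. With that understood, your meta-observations are sound. You are right that the $q$-shuffle reduction would be circular in the logical structure of this paper, since Corollary~\ref{cor:com} (the commutativity $C_i\star C_j=C_j\star C_i$) is itself deduced from the present lemma together with Proposition~\ref{thm:mainres}, and the exponential identities of the Introduction sit even further downstream. Your outline of the alternative --- a simultaneous induction on total degree over a package of Drinfeld-type commutation relations among the three families, with Lemma~\ref{lem:Ugr} used to confine $\lbrack E_{m\delta},E_{n\delta}\rbrack$ to $U^+_{2m+2n}$ --- is a fair description of how Damiani actually proceeds.

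As a proof, however, the proposal has a genuine gap: the decisive content is announced but never executed. You do not fix the precise list of auxiliary relations to be carried as simultaneous induction hypotheses (beyond the two that are read off directly from \eqref{eq:dam1introalt}), you do not compute the base cases $\lbrack E_\delta,E_{n\delta}\rbrack=0$, and you do not verify that the Leibniz expansion of $\lbrack E_{m\delta},E_{n\delta}\rbrack$ via \eqref{eq:dam2introalt} collapses to zero --- you explicitly defer this ``bookkeeping'' to \cite[p.~307]{damiani}. Since the entire difficulty of the lemma lives in exactly that cancellation (the signs and the factors $q^{\pm 2}$, $q+q^{-1}$, $\lbrack k\rbrack_q$ interacting with relations such as the one in Lemma~\ref{lem:dam2}), what you have written is a plan for a proof rather than a proof; it cannot be checked for correctness until the relation package is written down and the inductive step is carried through.
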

\begin{lemma}
\label{lem:dam2}
{\rm (See \cite[p.~307]{damiani}.)}
For $i,j \in \mathbb N$,
\begin{align*}
\lbrack E_{i\delta+\alpha_0}, E_{j\delta+\alpha_1}\rbrack_q = -q E_{(i+j+1)\delta}.
\end{align*}
\end{lemma}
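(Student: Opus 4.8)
The plan is to induct on $i$, keeping the statement quantified over all $j \in \mathbb N$ at each stage. For the base case $i=0$ I would expand the $q$-commutator directly, writing $\lbrack E_{\alpha_0}, E_{j\delta+\alpha_1}\rbrack_q = q A E_{j\delta+\alpha_1} - q^{-1} E_{j\delta+\alpha_1} A$ and recognizing the right-hand side as $-q$ times the recursion \eqref{eq:dam2introalt} for $E_{(j+1)\delta}$. This settles the claim for $i=0$ and every $j$.

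For the inductive step I fix $i \geq 1$, assume the result for $i-1$ and all $j$, and build the argument around the elementary identity
\begin{align*}
\lbrack \lbrack D, F\rbrack, G\rbrack_q = \lbrack D, \lbrack F, G\rbrack_q \rbrack + \lbrack F, \lbrack G, D\rbrack \rbrack_q,
\end{align*}
valid for arbitrary $D,F,G$ in any algebra, which I would verify by expanding both sides into the monomials $DFG, FDG, GDF, GFD$. Setting $D = E_\delta$, $F = E_{(i-1)\delta+\alpha_0}$, $G = E_{j\delta+\alpha_1}$, the first recursion in \eqref{eq:dam1introalt} turns the left-hand side into $(q+q^{-1}) \lbrack E_{i\delta+\alpha_0}, E_{j\delta+\alpha_1}\rbrack_q$, which is exactly the quantity to be evaluated.

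The two terms on the right are then disposed of using the data already in hand. In $\lbrack D, \lbrack F, G\rbrack_q\rbrack$ the inductive hypothesis gives $\lbrack F, G\rbrack_q = -q E_{(i+j)\delta}$, and since $E_\delta$ commutes with every $E_{n\delta}$ by Lemma \ref{lem:mc}, this term vanishes. In $\lbrack F, \lbrack G, D\rbrack\rbrack_q$ the second recursion in \eqref{eq:dam1introalt} gives $\lbrack G, D\rbrack = (q+q^{-1}) E_{(j+1)\delta+\alpha_1}$, so the term equals $(q+q^{-1})\lbrack E_{(i-1)\delta+\alpha_0}, E_{(j+1)\delta+\alpha_1}\rbrack_q$, which the inductive hypothesis applied at index $j+1$ evaluates to $(q+q^{-1})(-q E_{(i+j+1)\delta})$. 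Cancelling the factor $q+q^{-1}$ gives the claim for $i$.

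The step I expect to be the main obstacle is not any single calculation but arranging the induction so that the identity can be applied: the surviving term feeds on the hypothesis at the shifted second index $j+1$ while the vanishing term uses index $j$, so the induction must run on $i$ alone with $j$ universally quantified rather than on $i+j$ or on either variable in isolation. Once that is set up, everything reduces to the bookkeeping of passing between ordinary and $q$-commutators, which the displayed identity organizes cleanly, together with the two recursions and the commutativity of the $\lbrace E_{n\delta}\rbrace$.
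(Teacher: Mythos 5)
Your argument is correct. Note that the paper itself gives no proof of this lemma at all: it is quoted verbatim from Damiani (p.~307), so there is nothing internal to compare against. Your induction on $i$ with $j$ universally quantified does work as a self-contained derivation within the paper's framework. The base case is a direct match: $\lbrack A, E_{j\delta+\alpha_1}\rbrack_q = qAE_{j\delta+\alpha_1}-q^{-1}E_{j\delta+\alpha_1}A$ equals $-q$ times the right-hand side of \eqref{eq:dam2introalt} at $n=j+1$. The bracket identity $\lbrack \lbrack D,F\rbrack, G\rbrack_q = \lbrack D, \lbrack F,G\rbrack_q\rbrack + \lbrack F, \lbrack G,D\rbrack\rbrack_q$ checks out (the expansion actually produces six monomials, with $DGF$ and $FGD$ cancelling, not just the four you name --- a cosmetic point). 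With $D=E_\delta$, $F=E_{(i-1)\delta+\alpha_0}$, $G=E_{j\delta+\alpha_1}$, the left side becomes $(q+q^{-1})\lbrack E_{i\delta+\alpha_0},E_{j\delta+\alpha_1}\rbrack_q$ via \eqref{eq:dam1introalt}, the first term on the right vanishes by the inductive hypothesis at $(i-1,j)$ together with Lemma \ref{lem:mc}, and the second term evaluates via \eqref{eq:dam1introalt} and the hypothesis at $(i-1,j+1)$; both instances are covered by your quantification, and $q+q^{-1}\neq 0$ since $q$ is not a root of unity. The one caveat worth flagging is your reliance on Lemma \ref{lem:mc}, which the paper likewise only quotes from Damiani; in Damiani's original treatment the commutativity of the $E_{n\delta}$ and the present identity are established jointly in an intertwined induction, so a genuinely from-scratch proof would have to carry both statements along together. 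Taking Lemma \ref{lem:mc} as an available prior result, as the paper's ordering permits, your proof is complete.
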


\noindent We just gave some relations involving the elements of the Damiani PBW basis. Additional relations involving these elements can be found in \cite{damiani}, see also
\cite[Section~3]{catalan}.
\medskip

\noindent 
We have been discussing the Damiani PBW basis.
Next we discuss a variation on this PBW basis, due to J. Beck \cite{beck}.
Our discussion will involve some generating functions in an indeterminate $t$.

\begin{definition}  \rm {\rm (See \cite[p.~6]{bcp}.)}
Define the elements $\lbrace E^{\rm Beck}_{n \delta} \rbrace_{n=1}^\infty$ in $U^+_q$ such that
\begin{align}
\label{eq:expEE}
&{\rm exp}  \Biggl((q-q^{-1}) \sum_{k=1}^\infty E^{\rm Beck}_{k\delta}  t^k \Biggr) = 1 - (q-q^{-1}) \sum_{k=1}^\infty E_{k\delta}  t^k.
\end{align}
\end{definition}

\begin{example}\label{ex:EE} \rm We have
\begin{align*}
&E_\delta = - E^{\rm Beck}_\delta, \qquad \quad 
E_{2\delta} = - E^{\rm Beck}_{2\delta} -\frac{q-q^{-1}}{2} \bigl(E^{\rm Beck}_{\delta}\bigr)^2, 
\\
&E_{3\delta} = -E^{\rm Beck}_{3\delta}
-(q-q^{-1}) E^{\rm Beck}_{\delta} E^{\rm Beck}_{2\delta}
-\frac{(q-q^{-1})^2}{6} \bigl(E^{\rm Beck}_{\delta}\bigr)^3.
\end{align*}
Moreover
\begin{align*}
&E^{\rm Beck}_\delta = - E_\delta, \qquad \quad 
E^{\rm Beck}_{2\delta} = - E_{2\delta} -\frac{q-q^{-1}}{2} E^2_{\delta}, 
\\
&E^{\rm Beck}_{3\delta} = -E_{3\delta}
-(q-q^{-1}) E_{\delta} E_{2\delta}
-\frac{(q-q^{-1})^2}{3} E_{\delta}^3.
\end{align*}
\end{example}
\noindent We clarify how the elements $\lbrace E_{n\delta} \rbrace_{n=1}^\infty$ and $\lbrace E^{\rm Beck}_{n\delta} \rbrace_{n=1}^\infty$ are related.
\begin{lemma} \label{lem:hom} The following hold for $n\geq 1$:
\begin{enumerate}
\item[\rm (i)]  $E_{n\delta} $ is a homogeneous polynomial in $E^{\rm Beck}_\delta, E^{\rm Beck}_{2\delta}, \ldots, E^{\rm Beck}_{n\delta}$ that has total degree $n$, where we view
$E^{\rm Beck}_{k\delta}$ as having degree $k$ for $1 \leq k \leq n$;
\item[\rm (ii)] $E^{\rm Beck}_{n\delta}$ is a homogeneous polynomial in $E_\delta, E_{2\delta}, \ldots, E_{n\delta}$ that has total degree $n$, where we view
$E_{k\delta}$ as having degree $k$ for $1 \leq k \leq n$.
\end{enumerate}
\end{lemma}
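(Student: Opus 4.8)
The plan is to read off both statements directly from the generating function identity \eqref{eq:expEE} by extracting the coefficient of $t^n$, using the $\exp$/$\ln$ correspondence recorded in the preliminaries. Write $a(t) = (q-q^{-1})\sum_{k=1}^\infty E^{\rm Beck}_{k\delta}\, t^k$ and $b(t) = -(q-q^{-1})\sum_{k=1}^\infty E_{k\delta}\, t^k$, so that both are generating functions with constant coefficient $0$ and \eqref{eq:expEE} reads $\exp a(t) = 1 + b(t)$; equivalently $a(t) = \ln\bigl(1 + b(t)\bigr)$.

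First I would treat part (ii). Since the elements $\lbrace E_{k\delta}\rbrace_{k=1}^\infty$ mutually commute by Lemma \ref{lem:mc}, the coefficients of $b(t)$ mutually commute, so each power $b(t)^m$ is a well-defined generating function whose coefficients are (commutative) polynomials in the $E_{k\delta}$. Expanding $a(t) = \sum_{m\geq 1} \frac{(-1)^{m-1}}{m}\, b(t)^m$ and collecting the coefficient of $t^n$, one finds that $(q-q^{-1})\,E^{\rm Beck}_{n\delta}$ equals a sum of scalar multiples of monomials $E_{k_1\delta}\cdots E_{k_m\delta}$ with $k_i\geq 1$ and $k_1+\cdots+k_m = n$. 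Because every factor contributes at least a power $t^1$, only $m\leq n$ contribute, so the sum is finite and each index $k_i$ lies in $\lbrace 1,\dots,n\rbrace$; this shows $E^{\rm Beck}_{n\delta}$ is a polynomial in $E_\delta,\dots,E_{n\delta}$. Assigning degree $k$ to $E_{k\delta}$, each surviving monomial has total degree $\sum_i k_i = n$, giving homogeneity of total degree $n$. As a byproduct, the $E^{\rm Beck}_{k\delta}$ are polynomials in mutually commuting elements, and hence themselves mutually commute.

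Part (i) is then symmetric. Using the mutual commutativity of the $\lbrace E^{\rm Beck}_{k\delta}\rbrace_{k=1}^\infty$ just established, each power $a(t)^m$ has coefficients that are commutative polynomials in the $E^{\rm Beck}_{k\delta}$. Expanding $b(t) = \exp a(t) - 1 = \sum_{m\geq 1}\frac{1}{m!}\,a(t)^m$ and extracting the coefficient of $t^n$ expresses $-(q-q^{-1})\,E_{n\delta}$ as a finite sum of scalar multiples of monomials $E^{\rm Beck}_{k_1\delta}\cdots E^{\rm Beck}_{k_m\delta}$ with $k_i\geq 1$ and $\sum_i k_i = n$, again with $m\leq n$. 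This yields the desired homogeneous polynomial of total degree $n$ in $E^{\rm Beck}_\delta,\dots,E^{\rm Beck}_{n\delta}$.

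The argument is essentially bookkeeping, so I do not anticipate a serious obstacle. The two points that require care are that each coefficient is a genuinely finite sum, so that the word \emph{polynomial} is literal, which holds because $k_i\geq 1$ forces $m\leq n$; and that the variables commute, so that \emph{polynomial} is unambiguous, which comes from Lemma \ref{lem:mc} in part (ii) and then propagates to part (i). The main thing to get right is the order of deduction: establishing commutativity of the Beck family in part (ii) before invoking it in part (i).
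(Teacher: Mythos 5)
Your argument is correct and follows essentially the same route as the paper, whose entire proof is ``Use \eqref{eq:expEE} and induction on $n$'': both rest on reading off the coefficient of $t^n$ in the generating-function identity, and you simply replace the induction with a direct expansion of the $\exp$/$\ln$ series, which is a legitimate (and more explicit) way to carry out the same bookkeeping. Your attention to finiteness of each coefficient and to the commutativity needed to speak of polynomials (via Lemma \ref{lem:mc}, then propagated to the Beck family) is sound and fills in exactly what the paper leaves implicit.
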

\begin{proof} Use \eqref{eq:expEE} and induction on $n$.
\end{proof}
\begin{lemma}\label{lem:BeckG} For the grading $\lbrace U^+_n \rbrace_{n \in \mathbb N}$ of $U^+=U^+_q$, 
we have
$E^{\rm Beck}_{n \delta}\in U^+_{2n}$
for $n \geq 1$.
\end{lemma}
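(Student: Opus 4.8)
The plan is to reduce the claim to Lemma~\ref{lem:Ugr} by way of the polynomial description in Lemma~\ref{lem:hom}(ii) together with the multiplicativity of the grading. The key observation is that the ``total degree $n$'' weighting used in Lemma~\ref{lem:hom} (where $E_{k\delta}$ counts as degree $k$) is exactly half of the homogeneous degree in the grading $\lbrace U^+_n \rbrace_{n \in \mathbb N}$, since $E_{k\delta}$ lives in $U^+_{2k}$.

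First I would invoke Lemma~\ref{lem:hom}(ii): the element $E^{\rm Beck}_{n\delta}$ is a homogeneous polynomial of total degree $n$ in $E_\delta, E_{2\delta}, \ldots, E_{n\delta}$, where each $E_{k\delta}$ is assigned degree $k$. Hence $E^{\rm Beck}_{n\delta}$ is a linear combination of monomials of the form $E_{k_1\delta} E_{k_2\delta} \cdots E_{k_m\delta}$ with each $k_i \geq 1$ and $k_1 + k_2 + \cdots + k_m = n$. Next I would place each factor in the grading: by Lemma~\ref{lem:Ugr} we have $E_{k_i\delta} \in U^+_{2k_i}$ for $1 \leq i \leq m$. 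Applying the grading property $U^+_r U^+_s \subseteq U^+_{r+s}$ from Definition~\ref{def:gr}(iii) repeatedly, the monomial lies in $U^+_{2k_1 + 2k_2 + \cdots + 2k_m} = U^+_{2n}$.

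Finally, since $E^{\rm Beck}_{n\delta}$ is a linear combination of monomials each lying in the subspace $U^+_{2n}$, it follows that $E^{\rm Beck}_{n\delta} \in U^+_{2n}$, which is the assertion. I do not anticipate any substantive obstacle: the entire content has been packaged into the earlier lemmas, and the proof is a one-line combination of Lemma~\ref{lem:hom}(ii), Lemma~\ref{lem:Ugr}, and Definition~\ref{def:gr}(iii). The only point to verify is the routine bookkeeping that weighted degree $n$ in the $E_{k\delta}$ translates to homogeneous degree $2n$, which is immediate from the factor of $2$ appearing in $E_{k\delta} \in U^+_{2k}$.
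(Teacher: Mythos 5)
Your proposal is correct and follows exactly the paper's route: the paper's proof is the one-line citation ``By Lemma~\ref{lem:Ugr} and Lemma~\ref{lem:hom}(ii),'' and your argument simply spells out the bookkeeping (each weighted-degree-$n$ monomial lands in $U^+_{2n}$ by the grading property) that the paper leaves implicit.
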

\begin{proof} By  Lemma \ref{lem:Ugr} and Lemma \ref{lem:hom}(ii).
\end{proof}

\begin{proposition} \label{prop:Beckpbw} 
{\rm (See \cite[Proposition~6.1]{beck}.)}
A PBW basis for $U^+_q$
is obtained by the elements 
 \begin{align*}
    \lbrace E_{n \delta + \alpha_0}\rbrace_{n=0}^\infty,
     \qquad \quad
      \lbrace E_{n \delta + \alpha_1}\rbrace_{n=0}^\infty,
       \qquad \quad
        \lbrace E^{\rm Beck}_{n \delta}\rbrace_{n=1}^\infty
         \end{align*}
   in the
     linear
    order
\begin{align*}
 &E_{\alpha_0} < E_{\delta+\alpha_0} <
  E_{2\delta+\alpha_0}
  <  \cdots
  <
   E^{\rm Beck}_{\delta} < E^{\rm Beck}_{2\delta}
    < E^{\rm Beck}_{3\delta}
 < \cdots 
<
   E_{2\delta + \alpha_1} <
     E_{\delta + \alpha_1} < E_{\alpha_1}.
     \end{align*}
\end{proposition}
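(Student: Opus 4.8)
The plan is to deduce the Beck PBW basis from the Damiani PBW basis (Proposition \ref{prop:PBWbasis}) by exhibiting an invertible, grading-respecting change of basis that affects only the ``$\delta$-part'' of each ordered monomial. Write $\mathcal Z$ for the subalgebra of $U^+_q$ generated by $\{E_{n\delta}\}_{n=1}^\infty$; by Lemma \ref{lem:mc} this subalgebra is commutative. The ordered monomials $\prod_k E_{k\delta}^{m_k}$ occurring as the middle factors of the Damiani basis form a subset of that basis, hence are linearly independent, so they constitute a basis of $\mathcal Z$; in particular the $E_{k\delta}$ are algebraically independent. By Lemma \ref{lem:hom} the elements $E^{\rm Beck}_{n\delta}$ lie in $\mathcal Z$ and generate it, and being polynomials in the mutually commuting $E_{k\delta}$ they too mutually commute, so ordered Beck $\delta$-monomials make sense.

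The first substantive step is to show that the ordered Beck monomials $\prod_k (E^{\rm Beck}_{k\delta})^{m_k}$ form a basis of $\mathcal Z$. Here I would grade $\mathcal Z$ by $\delta$-degree, setting $\mathcal Z_N = \mathcal Z \cap U^+_{2N}$; by Lemma \ref{lem:BeckG} together with the homogeneity of the $E_{k\delta}$, both families of monomials with $\sum_k k\,m_k = N$ lie in $\mathcal Z_N$, and each family has exactly $p(N)$ members, where $p(N)$ denotes the number of partitions of $N$. Taking the logarithm of \eqref{eq:expEE} yields $E^{\rm Beck}_{k\delta} = -E_{k\delta} + (\text{a polynomial in } E_{j\delta}\ \text{with}\ j<k)$, so expanding $\prod_k (E^{\rm Beck}_{k\delta})^{m_k}$ and isolating the leading term $\pm\prod_k E_{k\delta}^{m_k}$ shows that, when both families are indexed by the partition $\lambda$ having $m_k$ parts equal to $k$ and the partitions are ordered by dominance, the transition matrix between the two families is triangular with nonzero diagonal entries. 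Since $\dim \mathcal Z_N = p(N)$, this finite triangular matrix is invertible and the Beck monomials form a basis of $\mathcal Z_N$ for every $N$, hence of $\mathcal Z$.

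The second step lifts this to the full basis. Because the $\alpha_0$-type, $\delta$-type, and $\alpha_1$-type generators occupy three consecutive blocks in both linear orders, every Beck ordered monomial factors as $P\cdot Q^{\rm Beck}\cdot R$, where $P$ is an ordered product of the $E_{i\delta+\alpha_0}$, $R$ is an ordered product of the $E_{j\delta+\alpha_1}$, and $Q^{\rm Beck}$ is an ordered Beck $\delta$-monomial; the Damiani ordered monomials have the identical shape with $Q^{\rm Beck}$ replaced by an ordered $E$-$\delta$-monomial. Expanding $Q^{\rm Beck}$ by means of the triangular expansion from the first step writes $P\cdot Q^{\rm Beck}\cdot R$ as a linear combination of Damiani basis elements having the same $P$ and $R$ and varying $\delta$-middle factor. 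Organizing the transition matrix according to the pair $(P,R)$ therefore makes it block diagonal, each block being the invertible matrix from the first step restricted to the relevant $\delta$-degree. An invertible transition matrix carries the Damiani PBW basis onto the set of Beck ordered monomials, which is consequently a basis of $U^+_q$; combined with the stated linear order this gives the asserted PBW basis in the sense of Definition \ref{def:pbw}.

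I expect the main obstacle to be the first step: verifying carefully that the exp/log relation \eqref{eq:expEE} produces a \emph{strictly} triangular correction (only lower-degree $E_{j\delta}$ appear in $E^{\rm Beck}_{k\delta}+E_{k\delta}$), and pinning down a monomial order on partitions, such as dominance, for which the transition matrix between the two families of $\delta$-monomials is genuinely triangular with nonzero diagonal. Once this finite-dimensional linear-algebra fact is established in each $\delta$-degree, the passage to the full basis in the second step is a routine block-diagonal argument that uses only the block structure of the two linear orders.
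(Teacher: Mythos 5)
Your argument is correct, but it is worth noting that the paper does not prove this proposition at all: it is quoted from Beck \cite[Proposition~6.1]{beck}, whose original argument runs through the braid-group (Lusztig) symmetries of the full quantum affine algebra and the structure of the affine root system. What you supply instead is a self-contained deduction from material already in the paper: the Damiani PBW basis (Proposition \ref{prop:PBWbasis}), the commutativity of the $E_{n\delta}$ (Lemma \ref{lem:mc}), the gradings (Lemmas \ref{lem:Ugr}, \ref{lem:BeckG}), and the defining relation \eqref{eq:expEE}. The key points all check out: taking the logarithm of \eqref{eq:expEE} does give $E^{\rm Beck}_{k\delta}=-E_{k\delta}+(\text{terms of degree}\geq 2\text{ in }E_{j\delta},\ j<k)$, each correction term refines the part $k$ into at least two smaller parts, refinement implies (strict) dominance, so the transition matrix on each finite homogeneous piece $\mathcal Z_N$ is triangular with diagonal entries $(-1)^{\sum_k m_k}$ for any linear extension of dominance (ordering partitions by number of parts would work just as well and is even simpler); and since both linear orders place the $\alpha_0$-block, the $\delta$-block, and the $\alpha_1$-block in the same consecutive positions, the change of basis is block diagonal over the pairs $(P,R)$ and the lift to all of $U^+_q$ is immediate. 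The one ingredient you use that the paper itself states without detailed proof is Lemma \ref{lem:hom}, but that follows from the same exp/log expansion you already carry out, so your argument is genuinely complete. Your route buys a proof internal to the paper's framework at the cost of taking Damiani's theorem as input; Beck's route is independent of Damiani's basis but requires the machinery of the whole affine quantum group.
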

\noindent The PBW basis in Proposition \ref{prop:Beckpbw} will be called the {\it Beck PBW basis}.
\medskip
\noindent
 Next we recall some relations satisfied by the elements of the Beck PBW basis.
\begin{lemma}\label{lem:Bmc} {\rm (See \cite[Proposition~1.2]{bcp}.)} The elements $\lbrace E^{\rm Beck}_{n\delta}\rbrace_{n=1}^\infty$ mutually
commute.
\end{lemma}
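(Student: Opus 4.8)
The plan is to deduce the mutual commutativity of the $\{E^{\rm Beck}_{n\delta}\}_{n=1}^\infty$ from the corresponding fact for the $\{E_{n\delta}\}_{n=1}^\infty$ already recorded in Lemma \ref{lem:mc}. The key observation is that every Beck element lies inside the commutative subalgebra of $U^+_q$ generated by the Damiani elements $\{E_{k\delta}\}_{k=1}^\infty$, so commutativity is inherited.

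First I would invoke Lemma \ref{lem:hom}(ii): for each $n \geq 1$ the element $E^{\rm Beck}_{n\delta}$ is a polynomial in $E_\delta, E_{2\delta}, \ldots, E_{n\delta}$. Let $\mathcal C$ denote the subalgebra of $U^+_q$ generated by $\{E_{k\delta}\}_{k=1}^\infty$. Then $E^{\rm Beck}_{m\delta}$ and $E^{\rm Beck}_{n\delta}$ both lie in $\mathcal C$ for all $m,n\geq 1$. By Lemma \ref{lem:mc} the generators of $\mathcal C$ mutually commute, so $\mathcal C$ is commutative; hence $E^{\rm Beck}_{m\delta}$ and $E^{\rm Beck}_{n\delta}$ commute. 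This completes the argument.

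Equivalently, one could argue directly from the defining generating function identity \eqref{eq:expEE} without citing Lemma \ref{lem:hom}. Rewriting that identity as $(q-q^{-1})\sum_{k=1}^\infty E^{\rm Beck}_{k\delta} t^k = {\rm ln}\bigl(1 - (q-q^{-1})\sum_{k=1}^\infty E_{k\delta} t^k\bigr)$, one notes that the right-hand side is a formal power series in the generating function $\sum_{k=1}^\infty E_{k\delta} t^k$, all of whose coefficients lie in $\mathcal C$; since $\mathcal C$ is closed under multiplication, every coefficient of the logarithm lies in $\mathcal C$, giving $E^{\rm Beck}_{n\delta}\in\mathcal C$ for all $n$. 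There is no genuine obstacle in either route: the only thing that must be checked is that the passage from the $E_{k\delta}$ to the $E^{\rm Beck}_{k\delta}$, via $\exp$ and $\ln$ of generating functions, never leaves the commutative subalgebra $\mathcal C$, and this is immediate once the commutativity of the $\{E_{k\delta}\}$ supplied by Lemma \ref{lem:mc} is in hand.
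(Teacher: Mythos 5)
Your argument is correct. The paper itself offers no internal proof of this lemma --- it simply cites \cite[Proposition~1.2]{bcp} --- so your proposal supplies a self-contained derivation where the paper defers to the literature. The derivation is sound: by Lemma \ref{lem:hom}(ii) (equivalently, by applying $\rm ln$ to both sides of \eqref{eq:expEE} and reading off coefficients of $t^n$), each $E^{\rm Beck}_{n\delta}$ lies in the subalgebra $\mathcal C$ of $U^+_q$ generated by $\lbrace E_{k\delta}\rbrace_{k=1}^\infty$; since those generators mutually commute by Lemma \ref{lem:mc}, the algebra $\mathcal C$ is commutative and the claim follows. There is no circularity, as Lemma \ref{lem:hom} is established independently of the present lemma. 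What your route buys is that the commutativity of the Beck imaginary root vectors is seen to be a formal consequence of Damiani's Lemma \ref{lem:mc} together with the defining identity \eqref{eq:expEE}, rather than a separate fact requiring the machinery of \cite{bcp}.
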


\begin{lemma} {\rm (See \cite[Proposition~1.2]{bcp}.)} For $k\geq 1$ and $\ell \geq 0$,
\begin{align}
\label{eq:con2}
\lbrack E_{\ell \delta+\alpha_0}, E^{\rm Beck}_{k \delta} \rbrack &= \frac{\lbrack 2k \rbrack_q}{k} E_{(k+\ell)\delta+\alpha_0},
\\
\label{eq:con1}
\lbrack E^{\rm Beck}_{k \delta}, E_{\ell \delta+\alpha_1} \rbrack &= \frac{\lbrack 2k \rbrack_q}{k} E_{(k+\ell)\delta+\alpha_1}.
\end{align}
\end{lemma}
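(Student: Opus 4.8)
The plan is to prove \eqref{eq:con2} directly and then deduce \eqref{eq:con1} from it by a symmetry of $U^+_q$, attacking \eqref{eq:con2} by trading the Beck generators for Damiani generators through the defining identity \eqref{eq:expEE}. For the symmetry, note that interchanging $A,B$ and reversing the order of factors sends the $q$-Serre relation \eqref{eq:qOns1} to a scalar multiple of \eqref{eq:qOns2} (compare \eqref{eq:qs}), so there is an $\mathbb F$-algebra antiautomorphism $\dagger$ of $U^+_q$ with $A\mapsto B$ and $B\mapsto A$. A short induction on $\ell$ using \eqref{eq:BAalt}--\eqref{eq:dam2introalt} together with Lemma \ref{lem:dam2} shows that $\dagger$ fixes $E_\delta$, interchanges $E_{\ell\delta+\alpha_0}\leftrightarrow E_{\ell\delta+\alpha_1}$ for all $\ell\ge 0$, and fixes every $E_{k\delta}$; since by Lemma \ref{lem:hom}(ii) and Lemma \ref{lem:mc} each $E^{\rm Beck}_{k\delta}$ is a polynomial in the mutually commuting $E_{j\delta}$, the antiautomorphism $\dagger$ fixes every $E^{\rm Beck}_{k\delta}$ as well. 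Applying $\dagger$ to \eqref{eq:con2} and using that it reverses commutators then yields \eqref{eq:con1}, so it is enough to prove \eqref{eq:con2}.

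Next I would reformulate \eqref{eq:con2} so that the Beck generators disappear. Put $\Phi(s)=\sum_{k\ge 1}E^{\rm Beck}_{k\delta}s^{k}$ and $\psi(s)=1-(q-q^{-1})\sum_{k\ge 1}E_{k\delta}s^{k}$, so that \eqref{eq:expEE} reads $\psi(s)=\exp\bigl((q-q^{-1})\Phi(s)\bigr)$. Because the $E^{\rm Beck}_{k\delta}$ commute (Lemma \ref{lem:Bmc}), conjugation by $\psi(s)$ equals $\exp\bigl((q-q^{-1})\,\mathrm{ad}\,\Phi(s)\bigr)$. Let $T$ be the formal shift operator on the span of $\{E_{m\delta+\alpha_0}\}_{m\ge 0}$ defined by $T\,E_{m\delta+\alpha_0}=E_{(m+1)\delta+\alpha_0}$. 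Using the elementary power-series identity
\[
(q-q^{-1})\sum_{k\ge 1}\frac{[2k]_q}{k}\,s^{k}=\ln\frac{1-q^{-2}s}{1-q^{2}s},
\]
one checks that \eqref{eq:con2}, for all $k\ge 1$ and $\ell\ge 0$, is exactly equivalent to the closed exchange relation
\[
\psi(s)\,E_{\ell\delta+\alpha_0}\,\psi(s)^{-1}=\frac{1-q^{2}sT}{1-q^{-2}sT}\,E_{\ell\delta+\alpha_0}\qquad(\ell\ge 0),
\]
where the right-hand side is expanded as a formal power series in $s$. This identity involves only the Damiani data, so it remains to prove it.

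Finally I would prove the exchange relation. Its coefficient of $s^{1}$ is the recursion \eqref{eq:dam1introalt}, i.e. $[E_\delta,E_{\ell\delta+\alpha_0}]=(q+q^{-1})E_{(\ell+1)\delta+\alpha_0}$, which with $E^{\rm Beck}_\delta=-E_\delta$ gives the base case $k=1$. For the higher coefficients I would induct on $k$: differentiating \eqref{eq:expEE} gives the Newton-type recursion
\[
k\,E^{\rm Beck}_{k\delta}=-k\,E_{k\delta}+(q-q^{-1})\sum_{j=1}^{k-1}j\,E^{\rm Beck}_{j\delta}\,E_{(k-j)\delta},
\]
so applying $\mathrm{ad}(E_{\ell\delta+\alpha_0})$ and expanding by the Leibniz rule reduces \eqref{eq:con2} for $k$ to the inductive hypothesis together with the commutator of the real root vector $E_{\ell\delta+\alpha_0}$ with the Damiani imaginary vector $E_{k\delta}$. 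I expect this last commutator to be the main obstacle: in contrast with the Beck case it is genuinely nonlinear, and evaluating it requires importing the further relations among $\{E_{m\delta+\alpha_0}\}$ and $\{E_{m\delta+\alpha_1}\}$ recorded in \cite{damiani} (see also \cite[Section~3]{catalan}), used in combination with Lemma \ref{lem:dam2} and Lemma \ref{lem:mc}. The whole purpose of passing to the Beck generators, and the reason a logarithm appears in the series identity above, is that this nonlinear Damiani exchange relation linearizes after the change of variables \eqref{eq:expEE}; carrying out that linearization cleanly is where the real work lies.

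As an independent check, once the closed forms of Sections 7--8 are in hand one can verify \eqref{eq:con2} inside the $q$-shuffle algebra: since $\natural$ is an injective algebra homomorphism, \eqref{eq:con2} is equivalent to the single identity $xC_\ell\star xC_{k-1}y-xC_{k-1}y\star xC_\ell=(q-q^{-1})\,xC_{k+\ell}$. This route presupposes the main theorem, however, and so cannot serve as a proof of the present lemma.
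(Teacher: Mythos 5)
First, a point of comparison: the paper does not prove this lemma at all --- it imports it verbatim from \cite[Proposition~1.2]{bcp} --- so any complete argument here would necessarily go beyond what the paper supplies. Your reduction of \eqref{eq:con1} to \eqref{eq:con2} is correct and complete: the antiautomorphism $\dagger$ exists because it sends the first $q$-Serre relation to $-1$ times the second; it fixes $E_\delta=q^{-2}BA-AB$, swaps $E_{\ell\delta+\alpha_0}\leftrightarrow E_{\ell\delta+\alpha_1}$ by \eqref{eq:dam1introalt}, fixes each $E_{k\delta}$ by the two specializations $i=0$ and $j=0$ of Lemma \ref{lem:dam2}, and hence fixes each $E^{\rm Beck}_{k\delta}$; applying $\dagger$ to \eqref{eq:con2} and reversing the commutator yields \eqref{eq:con1}. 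The generating-function reformulation, the Newton-type recursion $k\,E^{\rm Beck}_{k\delta}=-k\,E_{k\delta}+(q-q^{-1})\sum_{j=1}^{k-1}j\,E^{\rm Beck}_{j\delta}E_{(k-j)\delta}$ obtained by differentiating \eqref{eq:expEE}, and the base case $k=1$ (via $E^{\rm Beck}_\delta=-E_\delta$ and \eqref{eq:dam1introalt}) are all correct as well.

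The gap is exactly where you locate it, and it is the heart of the lemma. Your inductive step requires the value of $\lbrack E_{\ell\delta+\alpha_0},E_{m\delta}\rbrack$ for $1\le m\le k$, and this commutator is not a scalar multiple of $E_{(m+\ell)\delta+\alpha_0}$: in Damiani's presentation it equals such a term plus a nonzero combination of products of imaginary root vectors with real root vectors --- precisely the nonlinearity that Beck's change of variables \eqref{eq:expEE} is designed to cancel. You neither state that relation precisely nor verify that, once it is substituted into the Leibniz expansion alongside the inductive hypothesis, the product terms cancel and the coefficient of $E_{(k+\ell)\delta+\alpha_0}$ comes out to $\lbrack 2k\rbrack_q/k$. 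Your remark that ``carrying out that linearization cleanly is where the real work lies'' is an accurate diagnosis, but it leaves the statement unproven: what you have is a sound strategy and a correct reduction to one family of Damiani exchange relations, not a proof. To close it you would need to import the exact relation from \cite{damiani} (or \cite[Section~3]{catalan}) and do the telescoping computation, or else follow \cite{bcp}, where the identity is established through the braid-group realization of the imaginary root vectors rather than by manipulating the Damiani recursion. The $q$-shuffle verification you mention at the end is, as you note yourself, circular in the context of this paper, since Theorem \ref{thm:Beckmap} is proved using this lemma.
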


\noindent In Section 6 we will return our attention to the Damiani PBW basis and the Beck PBW basis. In the meantime, we will discuss an embedding, due to Rosso\cite{rosso1, rosso},
of the algebra $U^+_q$ into a $q$-shuffle algebra. For this $q$-shuffle algebra, the underlying vector space is a free algebra on two generators. We denote this free algebra by $\mathbb V$.

 \section{The free algebra $\mathbb V$}
 \noindent
 Let $x$, $y$ denote noncommuting indeterminates.
 Let $\mathbb V$ denote the free algebra generated by $x$ and $y$.
 By a {\it letter} in $\mathbb V$ we mean $x$ or $y$. For $n \in \mathbb N$, by a {\it word of length $n$} in $\mathbb V$ we mean a product of letters
 $a_1 a_2\cdots a_n$. We interpret the word of length $0$ to be the multiplicative identity in $\mathbb V$; this word is called {\it trivial} and denoted by 1.
 The vector space $\mathbb V$ has a (linear) basis consisting of its words; this basis is called {\it standard}.
 We endow the vector space $\mathbb V$ with a  bilinear form $\langle\,,\,\rangle: \mathbb V \times \mathbb V \to \mathbb F$ with respect to which
 the standard basis is orthonormal. This bilinear form is symmetric and nondegenerate.
 For a subspace $W \subseteq \mathbb V$, recall its orthogonal complement $W^\perp = \lbrace v \in \mathbb V | \langle v,w\rangle = 0 \;\forall w \in W\rbrace$. 
\medskip
 
\noindent 
 For $n \in \mathbb N$ let $\mathbb V_n$ denote the subspace of $\mathbb V$ spanned by the words of length $n$. The sum $\mathbb V=\sum_{n\in \mathbb N} \mathbb V_n$ is direct and the summands
 are mutually orthogonal. We have $\mathbb V_0 = \mathbb F 1$. We have  $\mathbb V_r \mathbb V_s \subseteq \mathbb V_{r+s}$ for $r,s\in \mathbb N$. By these comments the sequence
 $\lbrace \mathbb V_n \rbrace_{n \in \mathbb N}$ is a grading of the algebra $\mathbb V$.
 \begin{lemma} \label{lem:XY}
 For $r,s \in \mathbb N$ and $X, X' \in \mathbb V_r$ and $Y, Y' \in \mathbb V_s$ we have
 \begin{align*}
 \langle XY, X'Y'\rangle = \langle X, X' \rangle \langle Y, Y'\rangle.
 \end{align*}
 \end{lemma}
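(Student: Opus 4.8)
The plan is to reduce the claim to the case in which $X, X', Y, Y'$ are words, and then to exploit two structural facts: that multiplication in the free algebra $\mathbb V$ is given by concatenation of words, and that the standard basis is orthonormal, so that $\langle u, v\rangle$ equals $1$ if $u=v$ and $0$ otherwise for words $u,v$.

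First I would invoke bilinearity. Since $X, X' \in \mathbb V_r$, each is a linear combination of words of length $r$, and likewise $Y, Y'$ are linear combinations of words of length $s$. The assignment $(X,Y)\mapsto \langle XY, X'Y'\rangle$ is bilinear in its arguments because multiplication $\mathbb V \times \mathbb V \to \mathbb V$ is bilinear and $\langle\,,\,\rangle$ is bilinear; the same holds in the arguments $X'$ and $Y'$, and the right-hand side $\langle X, X'\rangle\langle Y, Y'\rangle$ is multilinear in all four arguments as well. Hence it suffices to verify the identity when $X, X', Y, Y'$ each range over words of the prescribed lengths.

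Next I would treat the word case. Let $X, X'$ be words of length $r$ and $Y, Y'$ words of length $s$. Then $XY$ and $X'Y'$ are the concatenated words, each of length $r+s$, so by orthonormality of the standard basis $\langle XY, X'Y'\rangle$ equals $1$ if $XY = X'Y'$ and $0$ otherwise. The key combinatorial observation is that a word of length $r+s$ factors uniquely as a prefix of length $r$ followed by a suffix of length $s$; comparing the first $r$ letters and the last $s$ letters shows that $XY = X'Y'$ if and only if $X = X'$ and $Y = Y'$. Therefore $\langle XY, X'Y'\rangle = \langle X, X'\rangle\langle Y, Y'\rangle$ in the word case, and the bilinear extension from the previous paragraph completes the proof.

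There is essentially no serious obstacle here; the argument is routine once the reduction to words is made. The only point requiring care is to keep the homogeneity hypotheses $X,X'\in\mathbb V_r$ and $Y,Y'\in\mathbb V_s$ in force throughout, since the unique prefix/suffix factorization — and hence the whole argument — relies on the factors having the fixed lengths $r$ and $s$. Absent the degree constraint a product word could split in several ways and the identity would fail, so fixing the lengths is precisely what makes the factorization, and thus the whole lemma, go through.
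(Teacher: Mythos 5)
Your proposal is correct and follows the same idea as the paper, which dispatches the lemma with the one-line remark that the standard basis is orthonormal; you have simply spelled out the bilinear reduction to words and the unique prefix/suffix factorization that the paper leaves implicit.
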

 \begin{proof} Since the standard basis for $\mathbb V$ is orthonormal.
 \end{proof}
 
 \begin{definition}\label{def:Jpm} \rm Define $J^+, J^- \in \mathbb V$ by
 \begin{align*}
 J^+ &= xxxy-\lbrack 3 \rbrack_q xxyx + \lbrack 3 \rbrack_q xyxx- yxxx,
 \\
 J^- &= yyyx-\lbrack 3 \rbrack_q yyxy + \lbrack 3 \rbrack_q yxyy-xyyy.
 \end{align*}
 \end{definition}
 
 \begin{definition}\label{def:J}\rm Let $J$ denote the $2$-sided ideal of the free algebra $\mathbb V$ generated by $J^+, J^-$.
 \end{definition}
 \noindent  Consider the quotient algebra $\mathbb V/J$. Since the algebra $\mathbb V$ is freely generated by $x$ and $y$,
 there exists an algebra homomorphism $\xi:\mathbb V\to U^+_q$ that sends $x\mapsto A$ and $y \mapsto B$. The kernel of $\xi$
 is equal to $J$, in view of \eqref{eq:qs} and
 Definition \ref{def:U}. Therefore, $\xi$ induces an algebra isomorphism $\mathbb V/J \to U^+_q$ that sends $x+J \mapsto A$ and $y+J\mapsto B$.
 \medskip
 
 \noindent We just described a connection between $J$ and $U^+_q$. In Proposition \ref{prop:orthog} 
 we will describe another connection between $J$ and $U^+_q$.

 \section{The $q$-shuffle algebra $\mathbb V$}
 
 \noindent In the previous section we discussed the free algebra $\mathbb V$. There is
another algebra structure on $\mathbb V$,
called the $q$-shuffle algebra.
This algebra was introduced by Rosso
\cite{rosso1, rosso} and described further by Green
\cite{green}. We will adopt the approach of 
\cite{green}, which is suited to our purpose.
The $q$-shuffle product 
is denoted by $\star$. To describe this product, we start
with some special cases.
We have $1 \star v = v \star 1 = v$ for $v \in \mathbb V$.
 For
letters $u,v$ we have
\begin{align*}
u \star v = uv + vu q^{(u,v)}
\end{align*}
\noindent where
\bigskip
\centerline{
\begin{tabular}[t]{c|cc}
$(\,,\,)$ & $x$ & $y$
   \\  \hline
   $x$ &
   $2$ & $-2$
     \\
     $y$ &
      $-2$ & $2$
         \\
              \end{tabular}
              }
              \medskip

\noindent
 Thus
   \begin{align*}
    &x \star y = xy+ q^{-2}yx,
 \qquad \qquad \quad
   y \star x = yx + q^{-2}xy,
\\
&x \star x = (1+ q^2)xx \qquad \qquad \quad
    y\star y = (1+q^2)yy.
\end{align*}
\noindent For a letter $u$ and
  a nontrivial word $v= v_1v_2\cdots v_n$ in $\mathbb V$,
  \begin{align*}
  &u \star v =
  \sum_{i=0}^n v_1 \cdots v_{i} u v_{i+1} \cdots v_n
  q^{
  ( v_1, u)+
  ( v_2, u)+
  \cdots + ( v_{i}, u)},
  \\
  &v \star u = \sum_{i=0}^n v_1 \cdots v_{i} u v_{i+1} \cdots v_n
  q^{
  ( v_{n},u)
  +
  ( v_{n-1},u)
  +
  \cdots
  +
  (v_{i+1},u)
  }.
  \end{align*}
   For example
  \begin{align*}
     & x\star (yyy)= xyyy+ q^{-2} yxyy+ q^{-4} yyxy+q^{-6}yyyx,
    \\
    &(yyy)\star x = q^{-6}xyyy+ q^{-4} yxyy+ q^{-2} yyxy+yyyx.
      \end{align*}
\noindent For nontrivial words $u=u_1u_2\cdots u_r$
and $v=v_1v_2\cdots v_s$ in $\mathbb V$,
\begin{align}
\label{eq:uvcirc}
&u \star v  = u_1\bigl((u_2\cdots u_r) \star v\bigr)
+ v_1\bigl(u \star (v_2 \cdots v_s)\bigr)
q^{
( u_1, v_1) +
( u_2, v_1) +
\cdots
+
(u_r, v_1)},
\\
\label{eq:uvcirc2}
&u\star v =
\bigl(u \star (v_1 \cdots v_{s-1})\bigr)v_s +
\bigl((u_1 \cdots u_{r-1}) \star v\bigr)u_r
q^{
( u_r, v_1) +
(u_r, v_2) + \cdots +
(u_r, v_s)
}.
\end{align}
 For example, take $r=2$ and $s=2$. 
We have
\begin{align*}
 u \star v &= 
               u_1 u_2 v_1 v_2 
            \\
            &+
               u_1 v_1 u_2 v_2  q^{( u_2, v_1)}
             \\
	     &+ 
      u_1 v_1 v_2 u_2  q^{( u_2, v_1)+ ( u_2,v_2)}
             \\
	     &+ 
      v_1 u_1 u_2 v_2  q^{(u_1, v_1)+ ( u_2,v_1)}
             \\
	     &+ 
      v_1 u_1 v_2 u_2  q^{( u_1, v_1)+ 
       (u_2,v_1)+( u_2,v_2)}
             \\
	     &+ 
      v_1 v_2 u_1 u_2  q^{( u_1, v_1)+ 
       ( u_1,v_2)+(u_2,v_1)+ ( u_2,v_2)}.
\end{align*}
\noindent Above Lemma  \ref{lem:XY}
 we mentioned a grading of the free algebra $\mathbb V$. This is also a grading for the $q$-shuffle algebra $\mathbb V$.

\begin{definition}\label{def:Usub} \rm Let $U$ denote the subalgebra of the $q$-shuffle algebra $\mathbb V$ generated by $x,y$. 
\end{definition}
 \noindent 
The algebra $U$ is described as follows.
With some effort
(or by
\cite[Theorem~13]{rosso1},
\cite[p.~10]{green})
one
obtains
\begin{align}
&
x \star x \star x \star y -
\lbrack 3 \rbrack_q
x \star x\star y \star x +
\lbrack 3 \rbrack_q
x \star y \star x \star x -
y \star x \star x \star x  = 0,
\label{eq:qsc1}
\\
&
y \star y \star y \star x -
\lbrack 3 \rbrack_q
y \star y \star x \star y +
\lbrack 3 \rbrack_q
y \star x \star y \star y -
x \star y \star y \star y  = 0.
\label{eq:qsc2}
\end{align}
So in the $q$-shuffle algebra $\mathbb V$ the elements
$x,y$ satisfy the
$q$-Serre relations.
Consequently there exists an algebra homomorphism
$\natural$ from $U^+_q$ to the $q$-shuffle algebra $\mathbb V$,
that sends $A\mapsto x$ and $B\mapsto y$.
The map $\natural$ has image $U$
by Definition
\ref{def:Usub}, 
and is injective by
  \cite[Theorem~15]{rosso}. 
Therefore $\natural: U^+_q \to U$ is an algebra isomorphism. See \cite{grosse, leclerc,negut, boxq} for more information about the $q$-shuffle algebra $\mathbb V$ and its relationship to $U^+_q$.
\medskip

\noindent Earlier we mentioned a grading for both $U^+_q$ and the $q$-shuffle algebra $\mathbb V$. These gradings are related as follows.
The algebra $U$ inherits the grading of $U^+_q$ via $\natural$. With respect to this grading, for $n \in \mathbb N$ the $n$-homogeneous component of $U$ is the
$\natural$-image of the $n$-homogeneous component of $U^+_q$. This homogeneous component is equal to $\mathbb V_n \cap U$.
\medskip

\noindent The following result is a variation on \cite[Theorem~5]{leclerc}.

\begin{proposition}\label{prop:orthog} {\rm (See \cite[Lemma~6.5]{boxq}.)}
 The ideal $J$ from Definition \ref{def:J} and the subalgebra $U$ from Definition \ref{def:Usub} 
are orthogonal complements with respect to the bilinear form $\langle \,,\,\rangle$.
\end{proposition}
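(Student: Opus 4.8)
The plan is to prove that $J$ and $U$ are orthogonal complements with respect to $\langle\,,\,\rangle$ by exploiting the grading and a counting argument, together with a compatibility between the $q$-shuffle product and the bilinear form. Since both $J$ and $U$ respect the grading $\{\mathbb V_n\}_{n\in\mathbb N}$ (the ideal $J$ is generated by homogeneous elements $J^+,J^-\in\mathbb V_4$, and $U$ is the $\natural$-image of the graded algebra $U^+_q$), and since the homogeneous components $\mathbb V_n$ are mutually orthogonal, it suffices to work inside each $\mathbb V_n$ separately. So I would first reduce to showing, for each $n$, that $J\cap\mathbb V_n$ and $U\cap\mathbb V_n$ are orthogonal complements inside the finite-dimensional space $\mathbb V_n$.

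The central tool I would establish is an adjointness relation tying the free-algebra multiplication to the $q$-shuffle product. The natural statement is that, with respect to $\langle\,,\,\rangle$, the operation of left (or right) multiplication by a letter $u$ in the free algebra is adjoint to the operation of $q$-shuffle multiplication by $u$; concretely, one wants something of the form $\langle uv, w\rangle = \langle v,\, \partial_u(w)\rangle$ where $\partial_u$ is a ``deletion'' or truncation operator dual to the shuffle insertion of $u$, and correspondingly an identity relating $\langle v, u\star w\rangle$ to free products. The recursive formulas \eqref{eq:uvcirc} and \eqref{eq:uvcirc2} for $u\star v$, read against the orthonormality of the standard basis (Lemma \ref{lem:XY}), are exactly what makes such an adjointness computation go through: inserting a letter under $\star$ is dual to stripping a letter under the free multiplication, up to the powers of $q$ encoded in $(\,,\,)$. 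From this adjointness one deduces that $u\star(\,\cdot\,)$ maps $U^\perp$ into $U^\perp$, so that $U^\perp$ is a left ideal — and symmetrically a right ideal — for the free product, hence a two-sided ideal of the free algebra $\mathbb V$.

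Granting that $U^\perp$ is a two-sided ideal, I would then show $U^\perp\supseteq J$ by checking that $J^+$ and $J^-$ lie in $U^\perp$. This is the concrete verification that $\langle J^+, w\rangle = 0 = \langle J^-, w\rangle$ for every word $w$ in the subalgebra $U$; because $U$ is generated by $x,y$ under $\star$, and because $J^+,J^-$ are the free-algebra expressions of the $q$-Serre relations \eqref{eq:qsc1}, \eqref{eq:qsc2} which vanish in $U$, the adjointness relation should convert the pairing $\langle J^\pm, w\rangle$ into a pairing of $1$ against the $\star$-expansion of the corresponding $q$-Serre combination, which is zero. Since $U^\perp$ is an ideal containing the generators of $J$, we get $J\subseteq U^\perp$, equivalently $U\subseteq J^\perp$.

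For the reverse inclusion I would invoke a dimension count in each graded piece. Because $\xi$ induces an isomorphism $\mathbb V/J\to U^+_q$ and $\natural:U^+_q\to U$ is an isomorphism, the graded dimensions match: $\dim(\mathbb V_n/(J\cap\mathbb V_n)) = \dim(U\cap\mathbb V_n)$. Combined with $\dim(U\cap\mathbb V_n)+\dim(U^\perp\cap\mathbb V_n)=\dim\mathbb V_n$ (nondegeneracy of $\langle\,,\,\rangle$ restricted to the finite-dimensional $\mathbb V_n$) and the already-proved inclusion $J\cap\mathbb V_n\subseteq U^\perp\cap\mathbb V_n$, the dimensions of $J\cap\mathbb V_n$ and $U^\perp\cap\mathbb V_n$ agree, forcing equality $J\cap\mathbb V_n = U^\perp\cap\mathbb V_n$ in each degree and hence $J=U^\perp$ globally; symmetrically $U=J^\perp$. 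The main obstacle I anticipate is establishing the adjointness identity cleanly: one must get the $q$-powers from the bilinear form $(\,,\,)$ on letters to match up correctly between the shuffle-insertion side and the free-deletion side, and verify that the deletion operator dual to $u\star(\,\cdot\,)$ is well defined on the standard basis. Once that bookkeeping is pinned down via \eqref{eq:uvcirc}--\eqref{eq:uvcirc2} and Lemma \ref{lem:XY}, the ideal property and the membership $J^\pm\in U^\perp$ follow, and the dimension argument closes the proof.
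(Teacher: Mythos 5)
The paper offers no proof of this proposition to compare against: it is quoted verbatim from \cite[Lemma~6.5]{boxq} (and noted to be a variant of \cite[Theorem~5]{leclerc}). Judged on its own, your architecture is the standard one and is sound: reduce to the mutually orthogonal homogeneous components $\mathbb V_n$; show $U^\perp$ is a two-sided ideal of the free algebra containing $J^+,J^-$, hence containing $J$; then force $J\cap\mathbb V_n=U^\perp\cap\mathbb V_n$ by the dimension count $\dim(U\cap\mathbb V_n)=\dim U^+_n=\dim\mathbb V_n-\dim(J\cap\mathbb V_n)$ coming from the graded isomorphisms $\mathbb V/J\cong U^+_q\cong U$ together with nondegeneracy of the form on $\mathbb V_n$. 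That last step is airtight as you state it.

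The one place your argument is genuinely garbled is the ideal property of $U^\perp$. You write that ``$u\star(\,\cdot\,)$ maps $U^\perp$ into $U^\perp$, so that $U^\perp$ is a left ideal for the free product''; as stated this is a non sequitur --- closure of $U^\perp$ under $\star$-multiplication is neither what you need nor what adjointness gives you. The correct chain is: left concatenation by a letter $a$ is adjoint to the deletion operator $\partial_a$ (which strips a leading $a$ and annihilates words not beginning with $a$), so $a\,U^\perp\subseteq U^\perp$ holds precisely when $\partial_a(U)\subseteq U$; and the reason $\partial_a$ preserves $U$ is that \eqref{eq:uvcirc} says exactly that $\partial_a$ is a twisted derivation of $\star$, namely $\partial_a(v\star w)=\partial_a(v)\star w+q^{(v_1,a)+\cdots+(v_r,a)}\,v\star\partial_a(w)$ for words $v=v_1\cdots v_r$ and $w$, whence $\partial_a$ maps the $\star$-subalgebra generated by $x,y$ into itself by induction on the number of $\star$-factors (with \eqref{eq:uvcirc2} handling right concatenation). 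You point at the right formulas but never isolate this skew-derivation property, and without it the ideal property does not follow. Likewise your justification of $\langle J^\pm,U\rangle=0$ (``a pairing of $1$ against the $\star$-expansion of the $q$-Serre combination'') does not describe the actual computation; but since by multidegree only the four products $x\star x\star x\star y,\dots,y\star x\star x\star x$ can pair nontrivially with $J^+$, this step is an honest finite check and it does work out (for instance $\langle J^+,x\star x\star x\star y\rangle$ is a nonzero scalar times $1-\lbrack 3\rbrack_q q^{-2}+\lbrack 3\rbrack_q q^{-4}-q^{-6}=0$). So: right approach, correct endgame, but you must pin down the skew-derivation property of the deletion operators before the middle of the argument is actually a proof.
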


\noindent In \cite[Theorem~1.7]{catalan} we applied the map $\natural $ to each element in the Damiani PBW basis for $U^+_q$,
and expressed the image in the standard basis
   for $\mathbb V$.
  We will review this result in Proposition \ref{thm:mainres} below. In order to prepare for Proposition \ref{thm:mainres}, we make some comments.

\begin{definition}
\label{def:CAT1} \rm
Define $\overline x = 1$
and $\overline y = -1$.
A word $u_1u_2\cdots u_n$
in $\mathbb V$ is  said to be {\it Catalan} whenever 
$\overline u_1+
\overline u_2+\cdots +
\overline u_i$ is nonnegative for $1 \leq i \leq n-1$ and zero for $i=n$.
 In this case $n$ is even.
\end{definition}

\begin{example}
\label{ex:CatEX} 
For $0\leq n \leq 3$ we display the Catalan words of length $2n$.
\bigskip

\centerline{
\begin{tabular}[t]{c|c}
   $n$  & {\rm Catalan words of length $2n$} 
   \\
   \hline
 $ 0 $  &  $1$
 \\
 $ 1 $  &  $xy$
 \\
 $ 2 $  &  $xyxy, \quad xxyy$
 \\
 $ 3 $  & 
 $xyxyxy,
 \quad xxyyxy,
 \quad xyxxyy,
 \quad xxyxyy,
 \quad xxxyyy$
   \end{tabular}}
\end{example}

\begin{definition} 
\label{def:CnIntro} {\rm (See \cite[Definition~1.5]{catalan}.)}
\rm For $n\in \mathbb N$ define
\begin{align}
C_n = 
&  \sum u_1u_2\cdots u_{2n} 
\lbrack 1\rbrack_q
\lbrack 1+\overline u_1\rbrack_q
\lbrack 1+\overline u_1+\overline u_2\rbrack_q
\cdots 
\lbrack 1+\overline u_1+\overline u_2+ \cdots +\overline u_{2n}\rbrack_q,
\label{eq:cdefIntro}
\end{align}
where the sum is over all the Catalan words $u_1 u_2 \cdots u_{2n}$
in $\mathbb V$ that have length $2n$. 
We call $C_n$ the {\it $n^{\rm th}$ Catalan element} in $\mathbb V$. Note that $C_n \in \mathbb V_{2n}$.
\end{definition}

\begin{example} 
\label{ex:CnIntro} We have
\begin{align*}
&\qquad \qquad C_0 = 1,
\qquad \qquad
 C_1 = \lbrack 2 \rbrack_q  xy,
\qquad \qquad
 C_2 = \lbrack 2\rbrack^2_q xyxy+ 
\lbrack 3 \rbrack_q \lbrack 2 \rbrack^2_q 
 xxyy,
\\
&
C_3 = 
\lbrack 2 \rbrack^3_q
      xyxyxy + 
       \lbrack 3\rbrack_q
\lbrack 2 \rbrack^3_q
      xxyyxy
     +
     \lbrack 3 \rbrack_q
\lbrack 2 \rbrack^3_q
     xyxxyy
     +
     \lbrack 3 \rbrack^2_q
\lbrack 2 \rbrack^3_q
     xxyxyy
     +
\lbrack 4 \rbrack_q
\lbrack 3 \rbrack^2_q 
\lbrack 2 \rbrack^2_q 
     xxxyyy.
\end{align*}
\end{example}

\begin{proposition} {\rm (See \cite[Theorem~1.7]{catalan}.)}
\label{thm:mainres} 
The map $\natural$ sends
\begin{align}
E_{n\delta+\alpha_0} \mapsto  q^{-2n}(q-q^{-1})^{2n} xC_n,
\qquad  \quad 
E_{n\delta+\alpha_1} \mapsto
 q^{-2n}(q-q^{-1})^{2n} C_ny 
\label{eq:main1}
\end{align}
for $n\geq 0$, and
\begin{align}
E_{n\delta} \mapsto  -q^{-2n}(q-q^{-1})^{2n-1} C_n
\label{eq:main2}
\end{align}
for $n\geq 1$.
\end{proposition}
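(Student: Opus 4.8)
The plan is to prove \eqref{eq:main1} and \eqref{eq:main2} simultaneously by induction on $n$, by pushing the defining recursions \eqref{eq:BAalt}--\eqref{eq:dam2introalt} through the algebra homomorphism $\natural$. Since $\natural$ sends the associative product of $U^+_q$ to the shuffle product $\star$, every commutator in those recursions becomes a $\star$-commutator, so the entire computation takes place inside the $q$-shuffle algebra $\mathbb V$, and \eqref{eq:main1}, \eqref{eq:main2} become assertions about the explicit elements $xC_n$, $C_ny$, $C_n$ written in the standard basis. For the base cases I would check $n=0$ directly: $E_{\alpha_0}=A\mapsto x=xC_0$ and $E_{\alpha_1}=B\mapsto y=C_0y$, since $C_0=1$. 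For $E_\delta$ I would compute $q^{-2}\,y\star x-x\star y=(q^{-4}-1)xy$ and compare with $-q^{-2}(q-q^{-1})C_1=-q^{-2}(q-q^{-1})\lbrack 2\rbrack_q\,xy$, using $(q-q^{-1})\lbrack 2\rbrack_q=q^2-q^{-2}$; the two agree. This anchors the induction and pins down $\natural(E_\delta)=-q^{-2}(q-q^{-1})C_1$.

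In the inductive step, I would substitute the induction hypotheses for $\natural(E_\delta)$, $\natural(E_{(n-1)\delta+\alpha_0})$, $\natural(E_{(n-1)\delta+\alpha_1})$ into \eqref{eq:dam1introalt}, \eqref{eq:dam2introalt} and cancel the common scalar $q^{-2n}(q-q^{-1})^{2n-1}$. This reduces the whole proposition to three identities in the $q$-shuffle algebra, valid for $n\geq 1$:
\begin{align*}
&(xy)\star(xC_{n-1})-(xC_{n-1})\star(xy)=-(q-q^{-1})\,xC_n,\\
&(C_{n-1}y)\star(xy)-(xy)\star(C_{n-1}y)=-(q-q^{-1})\,C_ny,\\
&q^{-2}(C_{n-1}y)\star x-x\star(C_{n-1}y)=-q^{-2}(q-q^{-1})\,C_n,
\end{align*}
where the products inside $xC_{n-1}$, $C_{n-1}y$, $C_n$ are free products while $\star$ is the shuffle product. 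I would attack these by expanding the left-hand sides with the shuffle-product formulas \eqref{eq:uvcirc}, \eqref{eq:uvcirc2}; the third identity is the most tractable, since there $\star$ is taken with the single letter $x$, which simply inserts one $x$ into each gap of every word appearing in $C_{n-1}y$. Concretely, for a word $w=w_1\cdots w_m$ the product $x\star w$ (resp. $w\star x$) inserts an $x$ at gap $i$ with weight $q^{2h_i}$ (resp. $q^{2(h_m-h_i)}$), where $h_i=\overline w_1+\cdots+\overline w_i$ is the height. Tracking these weights turns each identity into a statement about how inserting a letter into a Dyck word reshapes the height-product coefficient $\prod_i\lbrack 1+h_i\rbrack_q$ of Definition \ref{def:CnIntro}.

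The main obstacle is exactly this combinatorial bookkeeping, and two things must be verified at once: every non-Catalan word produced by the insertions must cancel, and every Catalan word of length $2n$ must survive with precisely the coefficient prescribed by Definition \ref{def:CnIntro}. The coefficient matching is governed by an elementary identity such as $q^{2h}-q^{-4-2h}=q^{-2}(q-q^{-1})(q^{h+1}+q^{-h-1})\lbrack h+1\rbrack_q$, which converts a difference of insertion weights into the new bracket factor $\lbrack 1+h\rbrack_q$ demanded by $C_n$. I would organize the surviving terms according to the first-return decomposition of a Catalan word ($u_1\cdots u_{2n}=x\,w'\,y\,w''$ with $w'$, $w''$ Catalan), so that the recursion stays aligned with the recursive structure of the $C_n$. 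Once the three displayed identities are in hand, the induction closes and \eqref{eq:main1}, \eqref{eq:main2} follow; Lemma \ref{lem:mc} and Lemma \ref{lem:Ugr} provide consistency checks (mutual commutativity of the $\lbrace E_{n\delta}\rbrace$ and the correct homogeneous degrees) along the way.
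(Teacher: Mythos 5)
First, a point of comparison: the paper does not actually prove Proposition \ref{thm:mainres} --- it is quoted from \cite[Theorem~1.7]{catalan} with a citation only --- so there is no internal proof to measure your attempt against; the relevant benchmark is the proof in \cite{catalan}, which your plan broadly parallels (induction on $n$, pushing the Damiani recursion \eqref{eq:BAalt}--\eqref{eq:dam2introalt} through $\natural$). Your reduction is correct as far as it goes: the base cases check, the scalar bookkeeping is right (using $\lbrack 2\rbrack_q=q+q^{-1}$ to absorb the denominators in \eqref{eq:dam1introalt}), and the three displayed $\star$-identities are exactly what the inductive step requires; they hold for $n=1$ by direct computation.

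However, there is a genuine gap: those three identities carry essentially the entire content of the theorem, and your treatment of them is a plan rather than a proof. Two specific problems. (1) The insertion-weight picture you describe (insert one letter into each gap with weight $q^{2h_i}$ or $q^{2(h_m-h_i)}$) applies only to the third identity, where one shuffles with the single letter $x$; the first two identities shuffle with the two-letter word $xy$, which by \eqref{eq:uvcirc}, \eqref{eq:uvcirc2} produces double insertions with interleaved weights, and you give no indication of how the required cancellations work there. (2) Even in the single-letter case, the identity $q^{2h}-q^{-4-2h}=q^{-2}(q-q^{-1})\lbrack 2h+2\rbrack_q$ accounts only for the weight attached to the insertion gap. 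Inserting $x$ at gap $j$ raises every subsequent height by $1$, so every factor $\lbrack 1+h_i\rbrack_q$ with $i\geq j$ in the coefficient from Definition \ref{def:CnIntro} becomes $\lbrack 2+h_i\rbrack_q$; moreover a fixed word of length $2n$ arises from several distinct gaps $j$, and it is the sum over all of these contributions that must reproduce the single prescribed coefficient of $C_n$, while all non-Catalan words must cancel. That summation and cancellation is where the real work of \cite{catalan} lies (it occupies most of that paper and is organized around different structural lemmas for the $C_n$), and your sketch does not carry it out. As written, the argument reduces the proposition to statements that are not easier than the proposition itself.
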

\noindent We emphasize that in 
{\rm (\ref{eq:main1})}, 
the notations $xC_n$ and $C_ny$ refer to the free product.
\medskip

\noindent We mention three consequences of Proposition \ref{thm:mainres}.

\begin{corollary}
    \label{prop:PBWbasisC}
A PBW basis for $U$ is obtained by the elements
\begin{align*}
\lbrace xC_n \rbrace_{n=0}^\infty, \qquad \quad
\lbrace C_n y \rbrace_{n=0}^\infty, \qquad \quad
\lbrace C_n  \rbrace_{n=1}^\infty
\end{align*}
  in the 
     linear
    order
 \begin{align*}
 x < xC_1< xC_2 < \cdots < C_1 < C_2 < C_3 < \cdots < C_2y < C_1y < y.
     \end{align*}
   \end{corollary}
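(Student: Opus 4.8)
The plan is to deduce Corollary \ref{prop:PBWbasisC} directly from Proposition \ref{prop:PBWbasis} by transporting the Damiani PBW basis through the algebra isomorphism $\natural : U^+_q \to U$. Since $\natural$ is an algebra isomorphism, it carries any PBW basis of $U^+_q$ to a PBW basis of $U$: a subset $\Omega \subseteq U^+_q$ equipped with a linear order $<$ satisfies the defining property of Definition \ref{def:pbw} if and only if its $\natural$-image $\natural(\Omega) \subseteq U$, equipped with the induced order, does. The only content to check is that an isomorphism sends ordered products to ordered products and bases to bases, which is immediate.

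Concretely, I would start from the Damiani PBW basis $\Omega = \lbrace E_{n\delta+\alpha_0}\rbrace \cup \lbrace E_{n\delta+\alpha_1}\rbrace \cup \lbrace E_{n\delta}\rbrace$ in the linear order displayed in Proposition \ref{prop:PBWbasis}, and apply $\natural$ term by term using Proposition \ref{thm:mainres}. This sends $E_{n\delta+\alpha_0} \mapsto q^{-2n}(q-q^{-1})^{2n}\, xC_n$, $E_{n\delta+\alpha_1} \mapsto q^{-2n}(q-q^{-1})^{2n}\, C_ny$, and $E_{n\delta} \mapsto -q^{-2n}(q-q^{-1})^{2n-1}\, C_n$. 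Each image is a nonzero scalar multiple of the corresponding element $xC_n$, $C_ny$, or $C_n$. Since $\natural$ is an isomorphism and each basis element is merely rescaled by a nonzero constant, the collection $\lbrace xC_n\rbrace_{n=0}^\infty \cup \lbrace C_ny\rbrace_{n=0}^\infty \cup \lbrace C_n\rbrace_{n=1}^\infty$ is a PBW basis for $U$ in the order inherited from the Damiani order under the index correspondence. That inherited order is exactly
\[
x < xC_1 < xC_2 < \cdots < C_1 < C_2 < C_3 < \cdots < C_2y < C_1y < y,
\]
since $xC_0 = x$ and $C_0y = y$.

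The one point requiring a remark is the effect of rescaling each generator by a nonzero scalar. If $\lbrace a_i\rbrace$ is a PBW basis with order $<$, and we replace each $a_i$ by $\lambda_i a_i$ with $\lambda_i \in \mathbb F$ nonzero, then the ordered monomials in the new generators differ from the ordered monomials in the old generators only by nonzero scalar factors, so they again form a basis; the linear order is unaffected because we keep the same indexing set. I expect this scalar-bookkeeping to be the only obstacle, and it is minor: since the $q$-shuffle product is graded and each generator lies in a single homogeneous component $\mathbb V_n \cap U$, the rescaling factors multiply cleanly across any ordered product and never vanish. Thus the Damiani PBW property for $U^+_q$ transfers verbatim to the stated PBW basis for $U$.
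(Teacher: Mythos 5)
Your proposal is correct and follows exactly the paper's route: the paper's proof is the one-line citation of Propositions \ref{prop:PBWbasis} and \ref{thm:mainres}, i.e.\ transporting the Damiani PBW basis through the isomorphism $\natural$ and absorbing the nonzero scalars. Your additional remarks on the scalar rescaling and the identifications $xC_0=x$, $C_0y=y$ are the implicit details the paper leaves to the reader, and you have handled them correctly.
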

\begin{proof} By Propositions    \ref{prop:PBWbasis}, \ref{thm:mainres}.
\end{proof}

\begin{corollary} {\rm (See \cite[Corollary~1.8]{catalan}.)}
\label{cor:com}
For $i,j \in \mathbb N$,
\begin{equation}
C_i \star C_j = C_j \star C_i.
\end{equation}
\end{corollary}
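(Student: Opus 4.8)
The plan is to transport the commutativity of the Damiani imaginary root vectors across the isomorphism $\natural$. The essential input is Lemma \ref{lem:mc}, which asserts that the elements $\lbrace E_{n\delta}\rbrace_{n=1}^\infty$ mutually commute in $U^+_q$, together with the fact that $\natural : U^+_q \to U$ is an algebra isomorphism. Since $\natural$ sends products to $\star$-products, applying $\natural$ to the relation $E_{i\delta}E_{j\delta} = E_{j\delta}E_{i\delta}$ yields
\begin{align*}
\natural(E_{i\delta}) \star \natural(E_{j\delta}) = \natural(E_{j\delta}) \star \natural(E_{i\delta})
\end{align*}
for all $i,j \geq 1$.

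Next I would invoke Proposition \ref{thm:mainres}, specifically \eqref{eq:main2}, to substitute $\natural(E_{n\delta}) = -q^{-2n}(q-q^{-1})^{2n-1}C_n$ for $n\geq 1$. Both sides then acquire the same scalar prefactor $q^{-2i-2j}(q-q^{-1})^{2i+2j-2}$, since scalars are central and commute with the $\star$-product. Because $q$ is nonzero and not a root of unity, this prefactor is nonzero, so it may be cancelled, leaving $C_i \star C_j = C_j \star C_i$ for all $i,j \geq 1$.

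Finally I would dispose of the boundary case in which $i=0$ or $j=0$. Here one uses only $C_0 = 1$ (from Definition \ref{def:CnIntro} or Example \ref{ex:CnIntro}) together with the fact that $1$ is the multiplicative identity for the $q$-shuffle product, so that $C_0 \star C_j = C_j = C_j \star C_0$. This completes all cases.

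I do not expect any genuine obstacle: the statement is a direct pullback of a known commutation relation in $U^+_q$ through the algebra isomorphism $\natural$, and the only points requiring the slightest care are the cancellation of the nonvanishing scalar (justified by $q$ not being a root of unity) and the separate, trivial treatment of the index value $0$.
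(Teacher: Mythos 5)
Your proposal is correct and follows the paper's own (one-line) proof exactly: apply $\natural$ to the commutation relation of Lemma \ref{lem:mc} and substitute via \eqref{eq:main2} from Proposition \ref{thm:mainres}. Your explicit treatment of the scalar cancellation and of the boundary case $i=0$ or $j=0$ (using $C_0=1$) merely fills in details the paper leaves implicit.
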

\begin{proof}  By Lemma \ref{lem:mc} and Proposition \ref{thm:mainres}.
\end{proof}

\begin{corollary} \label{cor:Qcom}
{\rm (See \cite[Corollary~3.6]{catalan}.)}
For $i,j\in \mathbb N$,
\begin{align}
q^{-1}C_{i+j+1} = \frac{ q (xC_i)\star (C_jy)-q^{-1} (C_j y)\star (x C_i)}
{q-q^{-1}}. \label{eq:CCC}
\end{align}
\end{corollary}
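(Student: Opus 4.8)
The statement to prove is Corollary~\ref{cor:Qcom}, asserting that
\begin{align*}
q^{-1}C_{i+j+1} = \frac{q\,(xC_i)\star(C_jy) - q^{-1}(C_jy)\star(xC_i)}{q-q^{-1}}
\end{align*}
for $i,j \in \mathbb N$. The cleanest route is to pull this statement back through the algebra isomorphism $\natural: U^+_q \to U$ and recognize it as the image of Lemma~\ref{lem:dam2}, which states $[E_{i\delta+\alpha_0}, E_{j\delta+\alpha_1}]_q = -q\,E_{(i+j+1)\delta}$. The plan is to rewrite this $q$-commutator identity in $U^+_q$ and transport each term into the $q$-shuffle algebra $\mathbb V$ using the closed forms from Proposition~\ref{thm:mainres}.

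First I would expand the $q$-commutator: by the definition $[X,Y]_q = qXY - q^{-1}YX$, Lemma~\ref{lem:dam2} reads
\begin{align*}
q\,E_{i\delta+\alpha_0}E_{j\delta+\alpha_1} - q^{-1}E_{j\delta+\alpha_1}E_{i\delta+\alpha_0} = -q\,E_{(i+j+1)\delta}.
\end{align*}
Next I would apply the homomorphism $\natural$ to both sides, using that $\natural$ sends products to $\star$-products. By Proposition~\ref{thm:mainres} we have $\natural(E_{i\delta+\alpha_0}) = q^{-2i}(q-q^{-1})^{2i}\,xC_i$, $\natural(E_{j\delta+\alpha_1}) = q^{-2j}(q-q^{-1})^{2j}\,C_jy$, and $\natural(E_{(i+j+1)\delta}) = -q^{-2(i+j+1)}(q-q^{-1})^{2(i+j+1)-1}\,C_{i+j+1}$. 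Substituting these gives
\begin{align*}
q^{-2i-2j}(q-q^{-1})^{2i+2j}\Bigl(q\,(xC_i)\star(C_jy) - q^{-1}(C_jy)\star(xC_i)\Bigr) = q^{-2i-2j-1}(q-q^{-1})^{2i+2j+1}\,C_{i+j+1},
\end{align*}
where the right-hand side carries an extra overall sign from the two minus signs cancelling.

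The remaining work is purely bookkeeping: dividing both sides by the common factor $q^{-2i-2j}(q-q^{-1})^{2i+2j}$ reduces the identity to
\begin{align*}
q\,(xC_i)\star(C_jy) - q^{-1}(C_jy)\star(xC_i) = q^{-1}(q-q^{-1})\,C_{i+j+1},
\end{align*}
which is exactly \eqref{eq:CCC} after dividing through by $q-q^{-1}$. I do not anticipate a genuine obstacle here, since every ingredient is supplied: the main point is that $\natural$ is an algebra homomorphism intertwining multiplication in $U^+_q$ with $\star$ in $\mathbb V$, so the algebraic relation transports verbatim. The only place demanding care is tracking the powers of $q$ and $(q-q^{-1})$ through the substitution and confirming the sign, but these are routine and the prefactors are designed to cancel cleanly. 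One subtlety worth flagging is that the identity involves the free product $xC_i$ and $C_jy$ as inputs to $\star$; since these are the actual $\natural$-images of the PBW elements, the free-product interpretation is forced by Proposition~\ref{thm:mainres} and requires no separate justification.
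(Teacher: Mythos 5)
Your proposal is correct and takes exactly the same route as the paper, whose proof is the one-line citation ``By Lemma \ref{lem:dam2} and Proposition \ref{thm:mainres}''; you have simply carried out the bookkeeping that the paper leaves implicit, and the powers of $q$ and $(q-q^{-1})$ and the sign all check out.
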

\begin{proof} By Lemma
\ref{lem:dam2} and Proposition \ref{thm:mainres}.
\end{proof}

\noindent We just displayed some relations involving the Catalan elements. 
Additional relations involving the Catalan elements can be found in \cite[Section~3]{catalan}.

 \section{The main result}
 
 \noindent In this section we prove our main result, which is Theorem \ref{thm:Beckmap}.
  Recall the map $\natural $ from below Definition \ref{def:Usub}.

 \begin{theorem} \label{thm:Beckmap}
The map
$\natural$ sends

\begin{align}
E^{\rm Beck}_{n\delta} \mapsto \frac{\lbrack 2n \rbrack_q}{n} q^{-2n} (q-q^{-1})^{2n-1} x C_{n-1} y
\label{eq:xcy}
\end{align}
 for $n\geq 1$. 
\end{theorem}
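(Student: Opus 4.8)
The plan is to compute $\natural(E^{\rm Beck}_{n\delta})$ by pushing the defining generating-function identity \eqref{eq:expEE} through the homomorphism $\natural$ and then extracting coefficients via a recursion. First I would apply $\natural$ to \eqref{eq:expEE}. Since $\natural$ is a homomorphism of graded algebras it carries the free-product exponential on the left to the exponential formed with the $q$-shuffle product $\star$, and by Proposition \ref{thm:mainres} it sends $E_{k\delta}\mapsto -q^{-2k}(q-q^{-1})^{2k-1}C_k$. Writing $\beta_k=(q-q^{-1})\natural(E^{\rm Beck}_{k\delta})$ and $W_k=q^{-2k}(q-q^{-1})^{2k}C_k$, the identity \eqref{eq:expEE} becomes
\begin{align*}
{\rm exp}\Bigl(\sum_{k=1}^\infty \beta_k t^k\Bigr)=1+\sum_{k=1}^\infty W_k t^k,
\end{align*}
where ${\rm exp}$ is with respect to $\star$. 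This reduces Theorem \ref{thm:Beckmap} to the claim that $\beta_n=\frac{\lbrack 2n\rbrack_q}{n}q^{-2n}(q-q^{-1})^{2n}\,xC_{n-1}y$ for $n\geq 1$.

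Next I would convert the exponential identity into a recursion. By Lemma \ref{lem:Bmc} the elements $E^{\rm Beck}_{k\delta}$ mutually commute, so the $\beta_k$ mutually commute under $\star$; hence the left-hand side may be differentiated formally in $t$, which gives
\begin{align*}
nW_n=\sum_{j=0}^{n-1}(n-j)\,W_j\star\beta_{n-j},\qquad W_0=1.
\end{align*}
The $j=0$ term is $n\beta_n$, so this determines $\beta_n$ from $W_1,\dots,W_n$ and $\beta_1,\dots,\beta_{n-1}$. I would then argue by induction on $n$; the base case $n=1$ is $\natural(E^{\rm Beck}_\delta)=-\natural(E_\delta)$, which follows from Example \ref{ex:EE} and Proposition \ref{thm:mainres}. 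Substituting the inductive form of $\beta_{n-j}$ together with $W_j=q^{-2j}(q-q^{-1})^{2j}C_j$, and cancelling the common factor $q^{-2n}(q-q^{-1})^{2n}$, the inductive step reduces to the single identity in the $q$-shuffle algebra
\begin{align*}
nC_n=\lbrack 2n\rbrack_q\,xC_{n-1}y+\sum_{j=1}^{n-1}\lbrack 2(n-j)\rbrack_q\,C_j\star(xC_{n-j-1}y),
\end{align*}
where $xC_m y$ denotes the free product.

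Establishing this Catalan identity is the heart of the argument and the step I expect to be the main obstacle. It is an intrinsic statement about $\star$-products in $\mathbb V$, and it cannot legitimately be deduced from the clean reformulation ${\rm exp}\bigl(\sum\frac{\lbrack 2k\rbrack_q}{k}xC_{k-1}y\,t^k\bigr)=1+\sum C_k t^k$ quoted in the introduction, since the latter is a \emph{consequence} of Theorem \ref{thm:Beckmap} rather than an input. Moreover the blocks $xC_{n-j-1}y$ have no transparent preimage in $U^+_q$ under $\natural$ (producing one is essentially the content of the theorem), so I would attack the identity directly in $\mathbb V$, expanding the $q$-shuffle products $C_j\star(xC_{n-j-1}y)$ by means of the recursive rules \eqref{eq:uvcirc}, \eqref{eq:uvcirc2} and inducting on the Catalan-word structure, while tracking the factor $\lbrack 2(n-j)\rbrack_q$ that encodes the Beck normalization. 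The commutativity of Corollary \ref{cor:com} and the $q$-commutator formula of Corollary \ref{cor:Qcom} should help organize these computations.

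As a parallel route I would keep in reserve a uniqueness argument driven by the commutation relations \eqref{eq:con2}, \eqref{eq:con1}. Taking $\ell=0$ in these and applying $\natural$ gives $x\star\natural(E^{\rm Beck}_{n\delta})-\natural(E^{\rm Beck}_{n\delta})\star x=\frac{\lbrack 2n\rbrack_q}{n}q^{-2n}(q-q^{-1})^{2n}xC_n$ and $\natural(E^{\rm Beck}_{n\delta})\star y-y\star\natural(E^{\rm Beck}_{n\delta})=\frac{\lbrack 2n\rbrack_q}{n}q^{-2n}(q-q^{-1})^{2n}C_ny$. If one verifies the two simpler identities $x\star(xC_{n-1}y)-(xC_{n-1}y)\star x=(q-q^{-1})xC_n$ and $(xC_{n-1}y)\star y-y\star(xC_{n-1}y)=(q-q^{-1})C_ny$, then the difference between $\natural(E^{\rm Beck}_{n\delta})$ and the right-hand side of \eqref{eq:xcy} commutes with both $x$ and $y$, hence is central in $U$, and triviality of the center of $U$ in positive degree would force equality. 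I expect the generating-function route to be the cleaner of the two, with the Catalan identity as its one genuinely computational ingredient.
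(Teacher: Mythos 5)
Your proposal contains two routes, and your assessment of which is cleaner is backwards: the route you keep ``in reserve'' is essentially the paper's proof, while your preferred generating-function route stalls at exactly the point you yourself flag. The Catalan identity
$nC_n=\lbrack 2n\rbrack_q\,xC_{n-1}y+\sum_{j=1}^{n-1}\lbrack 2(n-j)\rbrack_q\,C_j\star(xC_{n-j-1}y)$
is equivalent, after exponentiating, to Corollary \ref{cor:xCyC}, which the paper derives \emph{from} Theorem \ref{thm:Beckmap}; you correctly note this circularity, but then offer no actual proof of the identity --- only the hope that expanding the $\star$-products of Catalan sums via \eqref{eq:uvcirc}, \eqref{eq:uvcirc2} and ``inducting on the Catalan-word structure'' will succeed. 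There is no evidence such a direct combinatorial expansion is tractable, so as written the primary route has a genuine gap at its central step.

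The reserve route is the right one and is what the paper does: set $\ell=0$ in \eqref{eq:con2}, \eqref{eq:con1}, apply $\natural$, verify that $xC_{n-1}y$ satisfies the same two commutator identities (this is Lemma \ref{lem:n1n2}, a short computation from Corollary \ref{cor:Qcom} and \eqref{eq:uvcirc}), and conclude by centrality plus a degree count. But even here you are missing one essential ingredient: to invoke the triviality of the center of $U$ you must first know that the free product $xC_{n-1}y$ lies in $U$. A priori it is merely an element of $\mathbb V$, and an element of $\mathbb V\setminus U$ that $\star$-commutes with $x$ and $y$ need not be a scalar --- the cited fact concerns the center of $U$, not the centralizer of $U$ in $\mathbb V$. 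Establishing $xC_ky\in U$ is Lemma \ref{lem:inU} in the paper and is not free: it is proved by showing $xC_ky$ is orthogonal to the ideal $J$ and appealing to Proposition \ref{prop:orthog}, via a case analysis on the words $w_1J^{\pm}w_2$ that uses the leading- and trailing-letter structure of Catalan words. Supply that lemma and the two commutator verifications, and your reserve route becomes a complete proof coinciding with the paper's.
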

\noindent We emphasize that in \eqref{eq:xcy} the notation $xC_{n-1}y$ refers to the free product. This notation is illustrated in Example \ref{ex:xCy}.
\medskip

\noindent We will prove  Theorem \ref{thm:Beckmap} after two preliminary lemmas.

\begin{lemma}\label{lem:inU} For $k \in \mathbb N$ we have $x C_k y \in U$.
\end{lemma}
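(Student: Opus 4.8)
The plan is to show $xC_ky\in U$ by exhibiting it as a $q$-shuffle-algebra expression built from elements already known to lie in $U$. The key observation is that $U$ is the $\natural$-image of $U^+_q$, and Proposition~\ref{thm:mainres} tells us precisely which elements of $\mathbb V$ are $\natural$-images of the Damiani PBW generators: namely $xC_n$, $C_ny$, and $C_n$ are (up to nonzero scalars) the images of $E_{n\delta+\alpha_0}$, $E_{n\delta+\alpha_1}$, and $E_{n\delta}$ respectively. Since $U$ is a subalgebra of the $q$-shuffle algebra, any $q$-shuffle product of such images again lies in $U$. So the task reduces to expressing $xC_ky$ (a \emph{free} product) as a $q$-shuffle polynomial in the $xC_i$, $C_jy$, and $C_m$.

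First I would look for an identity relating the free product $xC_ky$ to $q$-shuffle products of the form $(xC_i)\star(C_jy)$ and $C_m$. The natural tool is Corollary~\ref{cor:Qcom}, which already expresses the Catalan element $C_{i+j+1}$ in terms of the $q$-shuffle commutator of $xC_i$ and $C_jy$. I would compute $(xC_i)\star(C_jy)$ directly from the definition of the $q$-shuffle product. Using the recursion \eqref{eq:uvcirc} (peeling the leading letter $x$ off $xC_i$) and \eqref{eq:uvcirc2} (peeling the trailing letter $y$ off $C_jy$), the shuffle product $(xC_i)\star(C_jy)$ should decompose into a free-product term $xC_i C_j y$ plus terms where the letters interleave, and with a bit of bookkeeping the interleaving contributions reassemble into Catalan-type elements. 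The goal is to isolate a term of the shape $x(\cdots)y$, and to recognize $C_k$ and the free product $xC_{k-1}y$ among the resulting pieces.

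Concretely, I expect an identity of roughly the form $xC_ky = \text{(linear combination, over scalars, of } C_{k+1} \text{ and shuffle products } (xC_i)\star(C_jy) \text{ with } i+j=k)$, or more simply a relation letting me solve for $xC_ky$ recursively in $k$. Once such a relation is in hand, I would argue by induction on $k$: the base case $k=0$ is $xC_0y = xy$, which equals (up to scalar) the $\natural$-image of $E_\delta$ and hence lies in $U$; and the inductive step expresses $xC_ky$ as a $q$-shuffle combination of $C_{k+1}\in U$, elements $xC_iy\in U$ with $i<k$ (by induction), and products of $xC_i,C_jy\in U$ (by Proposition~\ref{thm:mainres}). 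Since $U$ is a subalgebra closed under $\star$ and under $\mathbb F$-linear combination, this would give $xC_ky\in U$.

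The main obstacle will be establishing the precise combinatorial identity that links the free product $xC_ky$ to shuffle products of the PBW images. Expanding $(xC_i)\star(C_jy)$ produces many interleaved words, and the delicate point is verifying that the non-free-product terms collapse exactly into recognizable Catalan elements with the correct $q$-power coefficients, so that $xC_ky$ can be cleanly solved for. This is essentially the same type of calculation underlying Corollary~\ref{cor:Qcom}, so I would lean heavily on the machinery of \cite{catalan}; the risk is that the scalars require careful tracking and that a naive expansion leaves cross terms that do not obviously simplify. An alternative route that sidesteps the heaviest computation is to work on the algebra side: find an element of $U^+_q$ whose $\natural$-image is forced (by homogeneity and the known images in Proposition~\ref{thm:mainres} together with Lemma~\ref{lem:dam2}) to be a scalar multiple of $xC_ky$, and invoke that $\natural(U^+_q)=U$ directly.
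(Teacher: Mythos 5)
Your plan is genuinely different from the paper's, and it has a gap at its center: the combinatorial identity on which everything rests is never established, and the form you conjecture for it is not correct. You propose that $xC_ky$ should be a linear combination of $C_{k+1}$ and shuffle products $(xC_i)\star(C_jy)$ with $i+j=k$. Already for $k=1$ this fails: $xC_1y=[2]_q\,xxyy$, while $(xC_0)\star(C_1y)=[2]_q\bigl((1+q^2)xxyy+xyxy+q^{-2}xyyx\bigr)$ and $(xC_1)\star(C_0y)=[2]_q\bigl((1+q^2)xxyy+xyxy+q^{-2}yxxy\bigr)$; the words $xyyx$ and $yxxy$ occur in no other term, so both coefficients must vanish, and one is left trying to write $[2]_q\,xxyy$ as a multiple of $C_2$, which is impossible. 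The identity that actually holds is $xC_1y=\bigl(2C_2-C_1\star C_1\bigr)/[4]_q$ (Example \ref{ex:xCyC}), i.e.\ it involves $\star$-products of the $C_m$ among themselves, and the general pattern is governed by the exponential identity \eqref{eq:expC}. But in the paper that identity is a \emph{consequence} of Theorem \ref{thm:Beckmap}, whose proof uses Lemma \ref{lem:inU} as an input (to conclude $\mathcal C_n-xC_{n-1}y\in U$ before invoking the center argument). Your fallback --- finding an element of $U^+_q$ whose $\natural$-image is forced to be a scalar multiple of $xC_ky$ --- runs into the same circularity: identifying such a preimage is essentially the content of the main theorem. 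So to make your route work you would have to prove the explicit expansion of $xC_{k}y$ as a shuffle polynomial in $C_1,\dots,C_{k+1}$ by direct combinatorics, which is a substantially harder computation than the lemma itself and is nowhere carried out in the proposal.

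The paper avoids all of this by using the other characterization of $U$: by Proposition \ref{prop:orthog}, $U=J^\perp$, so it suffices to check $\langle xC_ky,\,w_1J^\pm w_2\rangle=0$ for all words $w_1,w_2$. Degree considerations reduce to ${\rm length}(w_1)+{\rm length}(w_2)=2k-2$, and a four-case analysis (on whether $w_1$, $w_2$ are trivial) finishes the job using Lemma \ref{lem:XY}: when both are nontrivial one peels off a letter from each end and uses $\langle C_k,J\rangle=0$; when one is trivial one uses the fact that a Catalan word of length $2k\geq 4$ begins with $xxx$, $xxy$, or $xyx$ (resp.\ ends with $yyy$, $xyy$, or $yxy$) with coefficients from \eqref{eq:cdefIntro} that are exactly orthogonal to $J^\pm$. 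That argument is local and computation-free in the sense you were hoping to avoid, and it is the route you should take here.
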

\begin{proof} By Proposition \ref{prop:orthog}, it suffices to show that $xC_ky$
is orthogonal to everything in $J$. By Definition \ref{def:J} and the construction, the vector space $J$ is spanned by the elements of the form
$w_1 J^{\pm} w_2$, where $w_1, w_2$ are words in $\mathbb V$.
Let $w_1$, $w_2$ denote words in $\mathbb V$. We will show that
\begin{align}
\langle xC_k y, w_1 J^+ w_2\rangle = 0, \qquad \qquad \langle xC_k y, w_1 J^- w_2 \rangle = 0.
\label{eq:wJw}
\end{align}
Observe that $x C_k y \in \mathbb V_{2k+2}$ and $J^\pm \in\mathbb V_4$. 
We may assume that $k\geq 1$ and ${\rm length}(w_1)+{\rm length}(w_2)= 2k-2$; otherwise $xC_k y$ and $w_1 J^\pm w_2$ are in different homogeneous components of $\mathbb V$, in which  case
\eqref{eq:wJw} holds.
\noindent We now investigate four cases.
First assume that $w_1$ and $w_2$ are trivial. We have $k=1$. We have $C_1 = \lbrack 2 \rbrack_q xy$ by Example \ref{ex:CnIntro},
and $\langle xxyy, J^\pm\rangle = 0$ by Definition \ref{def:Jpm}. 
By these comments \eqref{eq:wJw} holds.
Next assume that $w_1$ is trivial and $w_2$ is nontrivial. We have $k\geq 2$. By Definition \ref{def:CnIntro}, $C_k$ is a linear combination of the Catalan words in $\mathbb V$ that have length $2k$. For such a word $w$ its first three letters form one of the words
$xxx$ or $xxy$ or $xyx$. By this and the coefficient formula \eqref{eq:cdefIntro},
we obtain
\begin{align*}
C_k = xxx R_1 +( \lbrack 3 \rbrack_q xxy+xyx)R_2,  \qquad \qquad  R_1, R_2 \in \mathbb V_{2k-3}.
\end{align*}
By Definition \ref{def:Jpm} we obtain
\begin{align*}
&\langle xxxx, J^+\rangle = 0, \qquad \quad 
 \langle xxxy, J^+ \rangle = 1, \qquad \quad 
\langle xxyxJ^+ \rangle = -\lbrack 3 \rbrack_q, \\
&\langle xxxx, J^-\rangle = 0, \qquad \quad 
\langle xxxy, J^- \rangle = 0, \qquad \quad 
\langle xxyx, J^- \rangle = 0.
\end{align*}
By this  and Lemma \ref{lem:XY}, 
\begin{align*}
\langle xC_k y , w_1 J^\pm w_2\rangle &= \langle xC_k y, J^\pm w_2\rangle \\
&= \langle xxxxR_1y + (\lbrack 3 \rbrack_q xxxy + xxyx)R_2y, J^\pm w_2 \rangle \\
&= \langle xxxxR_1y, J^\pm w_2 \rangle +\langle (\lbrack 3 \rbrack_q xxxy + xxyx)R_2y, J^\pm w_2 \rangle \\
&=
\langle xxxx, J^\pm \rangle \langle R_1y, w_2\rangle + \langle \lbrack 3 \rbrack_q xxxy+xxyx, J^\pm\rangle \langle R_2y, w_2\rangle \\
&=0.
\end{align*}
We have established \eqref{eq:wJw}  for this case.
Next assume that $w_1$ is nontrivial and $w_2$ is trivial. We have $k\geq 2$. Adjusting the argument of the previous case, we obtain
\begin{align*}
C_k =  L_1 yyy +L_2( \lbrack 3 \rbrack_q xyy+yxy),  \qquad \qquad  L_1, L_2 \in \mathbb V_{2k-3}.
\end{align*}
By Definition \ref{def:Jpm} we obtain
\begin{align*}
&\langle yyyy, J^+\rangle = 0, \qquad \quad 
 \langle xyyy, J^+ \rangle = 0, \qquad \quad 
\langle yxyy, J^+ \rangle = 0, \\
&\langle yyyy, J^-\rangle = 0, \qquad \quad 
\langle xyyy, J^- \rangle = -1, \qquad \quad 
\langle yxyy, J^- \rangle = \lbrack 3 \rbrack_q.
\end{align*}
By this  and Lemma \ref{lem:XY}, 
\begin{align*}
\langle xC_k y , w_1 J^\pm w_2\rangle &= \langle xC_k y, w_1J^\pm \rangle \\
& =  \langle xL_1yyyy  + xL_2( \lbrack 3 \rbrack_q xyyy+yxyy), w_1J^\pm \rangle \\
& =  \langle xL_1yyyy, w_1J^\pm \rangle +
 \langle xL_2( \lbrack 3 \rbrack_q xyyy+yxyy), w_1J^\pm \rangle \\
&=
\langle xL_1, w_1 \rangle \langle yyyy, J^\pm\rangle + \langle xL_2, w_1\rangle \langle   \lbrack 3 \rbrack_q xyyy+yxyy, J^\pm \rangle \\
&=0.
\end{align*}
We have established \eqref{eq:wJw}  for this case.
Next assume that each of $w_1$, $w_2$ is nontrivial. There exist letters $a$, $b$ and words $w'_1$, $w'_2$
such that $w_1 = a w'_1$ and $w_2 = w'_2 b$. We have
$C_k \in U$ and $w'_1 J^\pm w'_2\in J$ and $\langle U, J\rangle = 0$, so
\begin{align*}
 \langle C_k, w'_1 J^\pm w'_2 \rangle =0.
\end{align*}
By this and Lemma \ref{lem:XY},
\begin{align*}
\langle xC_k y , w_1 J^\pm w_2\rangle = 
\langle xC_k y , aw'_1 J^\pm w'_2b\rangle =
 \langle x, a \rangle \langle C_k, w'_1 J^\pm w'_2 \rangle \langle y,b\rangle 
=0.
\end{align*}
We have established \eqref{eq:wJw} for this case.
The condition \eqref{eq:wJw} holds in all four cases, and the result follows.
\end{proof}

\begin{lemma} \label{lem:n1n2} For $k \in \mathbb N$ we have
\begin{align}
xC_{k+1} &= \frac{x \star (x C_k y) -(x C_k y) \star x}{q-q^{-1}},    \label{eq:n1}
\\
C_{k+1} y &= \frac{(x C_k y) \star y - y \star (xC_k y)}{q-q^{-1}}.  \label{eq:n2}
\end{align}
\end{lemma}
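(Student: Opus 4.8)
The plan is to work entirely inside the $q$-shuffle algebra $\mathbb V$, since \eqref{eq:n1} and \eqref{eq:n2} are statements about $\star$ and the free product with no reference to $\natural$. First I would reduce the two identities to one. Consider the linear map $\sigma:\mathbb V\to\mathbb V$ that reverses each word and interchanges the letters $x\leftrightarrow y$. Because the form $(\,,\,)$ is invariant under swapping $x,y$ while word-reversal sends $\star$ (and the free product) to its opposite, $\sigma$ is an anti-automorphism for both the $\star$-product and the free product. One checks from Definitions \ref{def:CAT1} and \ref{def:CnIntro} that $\sigma$ fixes each $C_n$ and fixes $xC_ky$, while $\sigma(x)=y$ and $\sigma(y)=x$. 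Applying $\sigma$ to \eqref{eq:n1} turns the left side $xC_{k+1}$ into $C_{k+1}y$ and the right side into that of \eqref{eq:n2}, so it suffices to prove \eqref{eq:n1}.

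For \eqref{eq:n1} I would compute the left side directly from the letter-versus-word $q$-shuffle formulas of Section 6. The key point is that the $q$-exponents there are partial sums $(v_1,x)+\cdots+(v_i,x)=2(\overline v_1+\cdots+\overline v_i)$. Writing a typical word of $xC_ky$ as $w=x\,u_1\cdots u_{2k}\,y$ with $u_1\cdots u_{2k}$ Catalan, its total weight $\overline x+\overline u_1+\cdots+\overline u_{2k}+\overline y$ equals $0$; hence in $x\star w-w\star x$ the two families of $x$-insertions combine as $q^{2S}-q^{-2S}=(q-q^{-1})\lbrack 2S\rbrack_q$, where $S$ is the height at the insertion point. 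Dividing by $q-q^{-1}$, the boundary insertions (height $0$) drop out since $\lbrack 0\rbrack_q=0$, and every surviving term is an $x$ prepended to a word of length $2k+2$.

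It then remains to match coefficients with $xC_{k+1}$. Collecting terms, for each Catalan word $v$ of length $2k+2$ with partial sums $t_0,\ldots,t_{2k+2}$, the coefficient of $xv$ on the left side is a sum over the positions $p$ with $v_p=x$ of
\begin{align*}
\lbrack 2(1+t_{p-1})\rbrack_q\ \prod_{m=0}^{p-1}\lbrack 1+t_m\rbrack_q\ \prod_{i=p+1}^{2k+1}\lbrack t_i\rbrack_q,
\end{align*}
whereas its coefficient in $xC_{k+1}$ is $\prod_{i=0}^{2k+1}\lbrack 1+t_i\rbrack_q$. Here the $p$-th term arises from deleting the inserted $x$ at position $p$, which produces the Catalan word $u=v_1\cdots v_{p-1}v_{p+1}\cdots v_{2k+1}$; the factors $\lbrack t_i\rbrack_q$ conveniently vanish (via $\lbrack 0\rbrack_q=0$) exactly on the deletions that fail to be Catalan, so no separate validity check is needed.

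Thus the whole lemma reduces to the purely $q$-combinatorial identity asserting that these two expressions agree for every Dyck path $t_0=0,\ldots,t_{2k+2}=0$. I expect this identity to be the main obstacle. I would first confirm it in the cases $k=0,1$ by hand, and then prove it in general by induction on $k$ using the first-return decomposition $v=x\,v'\,y\,v''$ of the Dyck path $v$, which factors the target product $\prod_i\lbrack 1+t_i\rbrack_q$ according to the two sub-paths and lets one sort the up-steps $p$ by the block they lie in. Making the up-step sum telescope under this return structure is the delicate step that makes the induction close.
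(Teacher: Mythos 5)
Your reduction is sound as far as it goes, and it takes a genuinely different route from the paper. The paper's proof of \eqref{eq:n1} is a two-line affair: it sets $i=0$, $j=k$ in Corollary \ref{cor:Qcom} to get $C_{k+1}=\bigl(q^2\,x\star(C_ky)-(C_ky)\star x\bigr)/(q-q^{-1})$, left-multiplies by $x$ in the free product, and then uses the recursion \eqref{eq:uvcirc} to check that $x\star(xC_ky)-(xC_ky)\star x = x\bigl(q^2\,x\star(C_ky)-(C_ky)\star x\bigr)$ (the exponent in \eqref{eq:uvcirc} vanishes for $(xC_ky)\star x$ because $xC_ky$ has equally many $x$'s and $y$'s); \eqref{eq:n2} is handled similarly. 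Your $\sigma$-symmetry reduction of \eqref{eq:n2} to \eqref{eq:n1} is correct, as is the computation that $\bigl(x\star w - w\star x\bigr)/(q-q^{-1})=\sum_i w_1\cdots w_i\,x\,w_{i+1}\cdots w_n\,\lbrack 2S_i\rbrack_q$ for a balanced word $w$, and the observation that the factors $\lbrack t_i\rbrack_q$ automatically kill the invalid deletions is a nice touch.

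The genuine gap is that the entire burden of the proof has been shifted onto the Dyck-path identity, which you state but do not prove; ``making the up-step sum telescope under this return structure is the delicate step'' is an acknowledgement that the argument does not yet close. Fortunately the identity is true and telescopes directly, with no induction or first-return decomposition needed. Put $P_j=\prod_{m=0}^{j}\lbrack 1+t_m\rbrack_q$ and $Q_j=\prod_{i=j}^{2k+1}\lbrack t_i\rbrack_q$. For any step $p$,
\begin{align*}
P_pQ_p-P_{p-1}Q_{p-1}=P_{p-1}\Bigl(\prod_{i=p}^{2k+1}\lbrack t_i\rbrack_q\Bigr)\bigl(\lbrack 1+t_p\rbrack_q-\lbrack t_{p-1}\rbrack_q\bigr),
\end{align*}
which vanishes when $p$ is a down-step ($t_p=t_{p-1}-1$) and equals $\lbrack 2t_p\rbrack_q\,P_{p-1}Q_{p+1}$ when $p$ is an up-step, using $\lbrack a+1\rbrack_q-\lbrack a-1\rbrack_q=q^{a}+q^{-a}$ and $\lbrack a\rbrack_q(q^a+q^{-a})=\lbrack 2a\rbrack_q$ with $a=t_p=1+t_{p-1}$. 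Summing over $p=1,\ldots,2k+2$ gives $P_{2k+2}Q_{2k+2}-P_0Q_0=\prod_{i=0}^{2k+1}\lbrack 1+t_i\rbrack_q-0$, since $Q_{2k+2}$ is the empty product and $P_0Q_0$ contains the factor $\lbrack t_0\rbrack_q=\lbrack 0\rbrack_q=0$; this is exactly your claimed right-hand side. With this inserted your proof is complete, though considerably longer than the paper's; what it buys is a self-contained combinatorial verification inside $\mathbb V$ that does not rely on Damiani's relation (Lemma \ref{lem:dam2}) imported through $\natural$.
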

\begin{proof} We first verify \eqref{eq:n1}. Consider the left-hand side of 
 \eqref{eq:n1}.
Setting $i=0$ and $j=k$ in \eqref{eq:CCC}, we obtain
\begin{align*}
C_{k+1} = \frac{q^2 x \star (C_k y) - (C_k y) \star x}{q-q^{-1}}.
\end{align*}
Therefore
\begin{align}
xC_{k+1} = x\frac{q^2 x \star (C_k y) - (C_k y) \star x}{q-q^{-1}}.
\label{eq:xC}
\end{align}
Now consider the right-hand side of
\eqref{eq:n1}. 
Using \eqref{eq:uvcirc} we obtain $x \star (x C_k y) = x\bigl( x C_k y +q^2 x \star (C_k y)\bigr)$ and
$(x C_k y) \star x = x \bigl( (C_k y)\star x + x C_k y\bigr)$. By these comments, the right-hand side of \eqref{eq:n1} is equal to the right-hand side of
\eqref{eq:xC}. We have verified \eqref{eq:n1}. The equation \eqref{eq:n2} is verified in a similar way.
\end{proof}
\noindent {\it Proof of Theorem \ref{thm:Beckmap}}. Define $\mathcal C_n \in U$ such that the map $\natural$ sends
\begin{align}
E^{\rm Beck}_{n\delta} \mapsto \frac{\lbrack 2n \rbrack_q}{n} q^{-2n} (q-q^{-1})^{2n-1} \mathcal C_n.
\label{eq:calC}
\end{align}
We show that $\mathcal C_n = xC_{n-1}y$. For \eqref{eq:con2} and \eqref{eq:con1}, apply $\natural $ to each side 
and evaluate the result using \eqref{eq:main1}, \eqref{eq:calC}. The result is
\begin{align}
x C_{k+\ell}&=\frac{(x C_\ell)\star \mathcal C_k - \mathcal C_k \star (x C_\ell)}{q-q^{-1}}, \qquad 
C_{k+\ell}y =\frac{\mathcal C_k \star (C_\ell y) - (C_\ell y) \star \mathcal C_k}{q-q^{-1}}. 
\label{lem:gen}
\end{align}
Setting $\ell=0$ and $k=n$ in \eqref{lem:gen}, we obtain
 \begin{align}
x C_{n}&=\frac{x\star \mathcal C_n - \mathcal C_n \star x}{q-q^{-1}}, \qquad \qquad
C_{n}y =\frac{\mathcal C_n \star y - y \star \mathcal C_n}{q-q^{-1}}. 
\label{lem:2gen}
\end{align}
Setting $k=n-1$ in Lemma \ref{lem:n1n2}, we obtain
 \begin{align}
x C_{n}&=\frac{x\star (xC_{n-1}y) - (xC_{n-1}y)\star x}{q-q^{-1}}, \qquad 
C_{n}y =\frac{(xC_{n-1}y) \star y - y \star (xC_{n-1}y)}{q-q^{-1}}. 
\label{lem:2gena}
\end{align}
\noindent Consider the element $\mathcal C_n-xC_{n-1}y$. 
By Lemma \ref{lem:inU} and the construction, $\mathcal C_n - xC_{n-1}y \in U$.
By the equations on the left   in \eqref{lem:2gen} and \eqref{lem:2gena}, 
$\mathcal C_n - xC_{n-1}y$ commutes with $x$ with respect to $\star $.
 By the equations on the right  in \eqref{lem:2gen} and \eqref{lem:2gena}, 
$\mathcal C_n - xC_{n-1}y$ commutes with $y$ with respect to $\star$. By these comments $\mathcal C_n-xC_{n-1}y$ is contained in the center of $U$. The algebra $U$ is isomorphic to $U^+_q$, so
by \cite[Lemma~6.1]{altCE} the center of $U$ is equal to $\mathbb F 1$. Therefore, there exists $\alpha_n \in \mathbb F$ such that $\mathcal C_n-xC_{n-1}y=\alpha_n 1$. We show that $\alpha_n =0$.
By Lemma \ref{lem:BeckG} and \eqref{eq:calC} along with our comments above Proposition \ref{prop:orthog},
 we obtain $\mathcal C_n \in \mathbb V_{2n}$.
We have $C_{n-1} \in \mathbb V_{2n-2}$ so $xC_{n-1} y \in \mathbb V_{2n}$. By these comments
 $\alpha_n 1 = \mathcal C_n-xC_{n-1}y \in \mathbb V_{2n}$.
However $\alpha_n 1 \in \mathbb V_0$ and $\mathbb V_0 \cap \mathbb V_{2n}=0$ since $n\geq 1$, so $\alpha_n 1=0$. Therefore $\alpha_n = 0$.
 We have shown that $\mathcal C_n = x C_{n-1}y$, as desired. \hfill $\Box$
  \\
\section{Some consequences of the main result}

\noindent In this section we give some consequences of our main result Theorem \ref{thm:Beckmap}.

\begin{corollary} \label{cor:xCyC} The following holds in the $q$-shuffle algebra $\mathbb V$:
\begin{align}
&{\rm exp}  \Biggl( \sum_{k=1}^\infty \frac{ \lbrack 2 k \rbrack_q }{k} x C_{k-1} y t^k \Biggr) = 1 + \sum_{k=1}^\infty C_k t^k.     \label{eq:expC}
\end{align}
\end{corollary}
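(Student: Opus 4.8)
The plan is to translate the defining generating-function identity \eqref{eq:expEE} for the Beck elements into the $q$-shuffle algebra $\mathbb V$ by applying the algebra isomorphism $\natural$ to both sides, and then to read off \eqref{eq:expC}. First I would recall that $\natural$ is an algebra homomorphism, so it commutes with the formal operations used to build generating functions: products of generating functions over $U^+_q$ map to products of the $\natural$-images, and consequently $\natural$ carries $\mathrm{exp}$ of a generating function (with constant coefficient $0$) to $\mathrm{exp}$, with respect to the $q$-shuffle product $\star$, of the image. This is the key structural observation, and it rests on the fact that the exponential is defined as a formal power series in the product, so a homomorphism intertwines the two exponentials.

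Next I would apply $\natural$ to the left-hand side of \eqref{eq:expEE}. Inside the exponential we have $(q-q^{-1})\sum_{k=1}^\infty E^{\rm Beck}_{k\delta}t^k$, and Theorem \ref{thm:Beckmap} tells us that $\natural$ sends $E^{\rm Beck}_{k\delta}$ to $\tfrac{[2k]_q}{k}q^{-2k}(q-q^{-1})^{2k-1}xC_{k-1}y$. To make the powers of $(q-q^{-1})$ and of $q$ match up cleanly, I would absorb them by rescaling the indeterminate $t$, replacing $t$ by $q^{-2}(q-q^{-1})^2 t$; this is the step that turns the awkward $k$-dependent factor $q^{-2k}(q-q^{-1})^{2k}$ into a single power of the rescaled variable and leaves the coefficient $\tfrac{[2k]_q}{k}xC_{k-1}y t^k$ exactly matching the left side of \eqref{eq:expC}. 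I would apply the same substitution to the right-hand side of \eqref{eq:expEE}, where $\natural$ sends $-(q-q^{-1})\sum_{k\ge1}E_{k\delta}t^k$ to $\sum_{k\ge1}q^{-2k}(q-q^{-1})^{2k}C_k t^k$ by \eqref{eq:main2}, which under the same rescaling becomes precisely $\sum_{k\ge1}C_k t^k$.

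Putting these together, applying $\natural$ to \eqref{eq:expEE} and performing the single substitution $t\mapsto q^{-2}(q-q^{-1})^2t$ yields exactly \eqref{eq:expC}, with the exponential now taken in the $\star$-product and $xC_{k-1}y$ read as a free product. I expect the main obstacle to be purely bookkeeping: verifying that the substitution $t\mapsto q^{-2}(q-q^{-1})^2 t$ simultaneously normalizes the coefficients on both sides, i.e. that the factor $q^{-2k}(q-q^{-1})^{2k}$ appearing uniformly on each side (after pulling out one $(q-q^{-1})^{-1}$ on the left and one $(q-q^{-1})$ on the right) is exactly $(q^{-2}(q-q^{-1})^2)^k$. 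There is a small subtlety worth flagging explicitly: on the left of \eqref{eq:expEE} there is an extra scalar $(q-q^{-1})$ multiplying the sum, and the image of $E^{\rm Beck}_{k\delta}$ already carries $(q-q^{-1})^{2k-1}$, so the product contributes $(q-q^{-1})^{2k}$, matching the right side; confirming this alignment of the lone $(q-q^{-1})^{\pm1}$ factors against the per-term powers is where I would be most careful. Once that is checked, the conclusion is immediate.
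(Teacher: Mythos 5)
Your proposal is correct and takes essentially the same approach as the paper's proof: apply $\natural$ to each side of \eqref{eq:expEE}, evaluate the result using \eqref{eq:main2} and \eqref{eq:xcy} (noting that an algebra homomorphism intertwines the formal exponentials), and rescale $t$ to absorb the common factor $q^{-2k}(q-q^{-1})^{2k}$ appearing on both sides. The only cosmetic difference is the direction in which the rescaling is stated --- the paper substitutes $t\mapsto q^{2}(q-q^{-1})^{-2}t$ into the derived identity, while you rename $q^{-2}(q-q^{-1})^{2}t$ as the new variable --- but these are the same normalization, and your bookkeeping of the lone $(q-q^{-1})^{\pm 1}$ factors against the per-term powers is exactly right.
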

\noindent We emphasize that in \eqref{eq:expC} the exponential function is with respect to the $q$-shuffle product, and the notation $xC_{k-1}y$ refers to the free product.
\begin{proof} Apply the map $\natural $ to each side of \eqref{eq:expEE}, and evaluate the result using \eqref{eq:main2}, \eqref{eq:xcy}. This yields \eqref{eq:expC}
after a change of variables in which $t$ is replaced by $q^2(q-q^{-1})^{-2}t$.
\end{proof}

 \noindent Using \eqref{eq:expC}  the elements $\lbrace xC_ny \rbrace_{n=0}^\infty$ and the elements $\lbrace C_n \rbrace_{n=1}^\infty$ can be
 recursively obtained from each other. This is illustrated in Example  \ref{ex:xCyC}.
   
\begin{corollary} \label{lem:Poly} 
For $n\geq 1$ the following hold in the $q$-shuffle algebra $\mathbb V$:
\begin{enumerate}
\item[\rm (i)]  $C_n $ is a homogeneous polynomial in $xC_0y, xC_1y, \ldots, xC_{n-1}y$ that has total degree $n$, where we view
$xC_{k-1}y$ as having degree $k$ for $1 \leq k \leq n$;
\item[\rm (ii)] $xC_{n-1}y$ is a homogeneous polynomial in $C_1, C_2,\ldots, C_n$ that has total degree $n$, where we view
$C_k$ as having degree $k$ for $1 \leq k \leq n$.
\end{enumerate}
\end{corollary}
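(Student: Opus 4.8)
The plan is to derive Corollary \ref{lem:Poly} as a purely formal consequence of the exponential/logarithm relationship established in Corollary \ref{cor:xCyC}, transported back through the isomorphism $\natural$ to statements about the Damiani and Beck generators, where the analogous claim is already recorded. The key observation is that the relation \eqref{eq:expC} is exactly the $\natural$-image of the defining relation \eqref{eq:expEE} between the Beck elements $\{E^{\rm Beck}_{k\delta}\}$ and the Damiani elements $\{E_{k\delta}\}$, and Lemma \ref{lem:hom} already asserts the mutual homogeneous-polynomial dependence of those two families. So the cleanest route is to apply $\natural$ to Lemma \ref{lem:hom}, use Proposition \ref{thm:mainres} to identify $\natural(E_{k\delta})$ with $C_k$ (up to the nonzero scalar $-q^{-2k}(q-q^{-1})^{2k-1}$) and Theorem \ref{thm:Beckmap} to identify $\natural(E^{\rm Beck}_{k\delta})$ with $xC_{k-1}y$ (up to the nonzero scalar $\tfrac{[2k]_q}{k}q^{-2k}(q-q^{-1})^{2k-1}$), and then observe that rescaling variables by nonzero constants preserves the property of being a homogeneous polynomial of a given total degree in the stated variables.

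Concretely, for part (i) I would start from Lemma \ref{lem:hom}(i): $E_{n\delta}$ is a homogeneous polynomial of total degree $n$ in $E^{\rm Beck}_\delta,\dots,E^{\rm Beck}_{n\delta}$, with $E^{\rm Beck}_{k\delta}$ weighted by degree $k$. Since $\natural$ is an algebra homomorphism, applying it yields that $\natural(E_{n\delta})$ is that same homogeneous polynomial in the elements $\natural(E^{\rm Beck}_{k\delta})$, with multiplication now the $q$-shuffle product $\star$. Substituting the scalar identifications above, $C_n$ becomes a homogeneous polynomial (with respect to $\star$) in the elements $xC_{k-1}y$ for $1\le k\le n$. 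The degree bookkeeping survives because each substitution $E^{\rm Beck}_{k\delta}\mapsto (\text{nonzero scalar})\,xC_{k-1}y$ replaces a weight-$k$ variable by a nonzero scalar multiple of a weight-$k$ variable, and likewise $E_{n\delta}\mapsto(\text{nonzero scalar})\,C_n$ only rescales the left-hand side; a homogeneous polynomial of total degree $n$ remains homogeneous of total degree $n$ under such rescalings. Part (ii) is entirely symmetric, starting instead from Lemma \ref{lem:hom}(ii) and reading the scalar identifications in the reverse direction.

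I do not expect a serious obstacle here, since the substance of the claim is already contained in Lemma \ref{lem:hom} together with the two explicit formulas in Proposition \ref{thm:mainres} and Theorem \ref{thm:Beckmap}; the corollary is essentially the image of Lemma \ref{lem:hom} under $\natural$. The one point requiring a little care is the interaction between the multiplication being used: Lemma \ref{lem:hom} is a statement in $U^+_q$ under its ordinary product, whereas the target statement is in the $q$-shuffle algebra $\mathbb V$ under $\star$. This is handled by noting that $\natural$ is an isomorphism of $U^+_q$ onto the subalgebra $U$ of the $q$-shuffle algebra (as recorded below Definition \ref{def:Usub}), so products in $U^+_q$ map to $\star$-products in $U$; thus a monomial of total degree $n$ in the ordinary product maps to the corresponding $\star$-monomial of total degree $n$. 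An alternative, self-contained derivation that avoids invoking Lemma \ref{lem:hom} is to read Corollary \ref{cor:xCyC} directly: taking the logarithm of $1+\sum_{k\ge 1}C_k t^k$ expresses each $xC_{n-1}y$ (up to scalar) as a polynomial in the $C_k$, giving (ii) by induction on $n$, while exponentiating gives (i); the homogeneity then follows because in \eqref{eq:expC} the coefficient of $t^n$ on each side lies in $\mathbb V_{2n}$, so only terms of matching total weight can contribute. Either route finishes the proof; I would likely present the first, as it most directly exhibits the corollary as the shuffle-algebra shadow of Lemma \ref{lem:hom}.
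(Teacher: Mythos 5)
Your proposal is correct. The paper's own proof is the one-line direct route you mention as your ``alternative'': use the exponential identity \eqref{eq:expC} and induction on $n$, extracting the coefficient of $t^n$ on each side to express $C_n$ in terms of $\star$-monomials in the $xC_{k-1}y$ and conversely (via the logarithm). Your primary route --- applying $\natural$ to Lemma \ref{lem:hom} and then rescaling by the nonzero constants coming from Proposition \ref{thm:mainres} and Theorem \ref{thm:Beckmap} --- is a legitimate variant, and the bookkeeping you describe (a weight-$k$ variable maps to a nonzero scalar multiple of a weight-$k$ variable, so weighted homogeneity and total degree are preserved; products in $U^+_q$ become $\star$-products under the isomorphism onto $U$) is all that is needed. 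In substance the two routes are the same induction carried out on opposite sides of the isomorphism $\natural$: Lemma \ref{lem:hom} is itself proved from \eqref{eq:expEE} by induction, and \eqref{eq:expC} is the $\natural$-image of \eqref{eq:expEE}. What your primary route buys is that the induction need not be repeated in $\mathbb V$; what the paper's route buys is self-containment within the shuffle algebra, with no need to track the scalars. Either presentation is acceptable.
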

\noindent We emphasize that the above homogeneous polynomials are with respect to the $q$-shuffle product.
  \begin{proof}  Use \eqref{eq:expC} and induction on $n$. 
   \end{proof}

\begin{corollary}
    \label{prop:PBWbasisXCY}
A PBW basis for $U$ is obtained by the elements
\begin{align*}
\lbrace xC_n \rbrace_{n \in \mathbb N},\qquad \quad
\lbrace C_n y \rbrace_{n\in \mathbb N}, \qquad \quad
\lbrace xC_ny  \rbrace_{n\in \mathbb N}
\end{align*}
  in the
     linear
    order
\begin{align*}
x < xC_1< xC_2 < \cdots < xy < xC_1y < xC_2y < \cdots < C_2y < C_1y < y.
\end{align*}    
   \end{corollary}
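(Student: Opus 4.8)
The plan is to transport the Beck PBW basis for $U^+_q$ through the algebra isomorphism $\natural: U^+_q \to U$, exactly as Corollary \ref{prop:PBWbasisC} transported the Damiani PBW basis. By Proposition \ref{prop:Beckpbw}, the elements $\lbrace E_{n\delta+\alpha_0}\rbrace$, $\lbrace E_{n\delta+\alpha_1}\rbrace$, $\lbrace E^{\rm Beck}_{n\delta}\rbrace$ form a PBW basis for $U^+_q$ in the indicated linear order. Since $\natural$ is an algebra isomorphism (as recalled below Definition \ref{def:Usub}), the images of these elements form a PBW basis for $U$, provided the linear order is carried along.

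First I would compute the images under $\natural$. By Proposition \ref{thm:mainres}, $E_{n\delta+\alpha_0}$ maps to a nonzero scalar times $xC_n$ and $E_{n\delta+\alpha_1}$ maps to a nonzero scalar times $C_n y$; in particular $E_{\alpha_0}\mapsto x$ and $E_{\alpha_1}\mapsto y$ up to nonzero scalars, since $C_0=1$. By Theorem \ref{thm:Beckmap}, $E^{\rm Beck}_{n\delta}$ maps to a nonzero scalar times $xC_{n-1}y$; in particular $E^{\rm Beck}_\delta\mapsto xy$ up to a nonzero scalar. Thus the $\natural$-images run through nonzero scalar multiples of $x, xC_1, xC_2,\ldots$, then $xy, xC_1y, xC_2y,\ldots$, then $\ldots, C_2y, C_1y, y$. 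This is precisely the list in the statement, and the Beck order translates term by term into the stated order; the central block $E^{\rm Beck}_\delta < E^{\rm Beck}_{2\delta}<\cdots$ becomes $xy < xC_1y < xC_2y<\cdots$.

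It then remains to observe that scaling does not affect the PBW property. If each element of a PBW basis is replaced by a nonzero scalar multiple of itself, then each ordered product $a_1a_2\cdots a_n$ is scaled by the corresponding nonzero product of scalars, so the ordered products still form a basis in the same linear order. Hence the elements $\lbrace xC_n\rbrace$, $\lbrace C_n y\rbrace$, $\lbrace xC_n y\rbrace$ themselves form a PBW basis for $U$ in the stated order, and the proof can be recorded compactly as an application of Propositions \ref{prop:Beckpbw}, \ref{thm:mainres} together with Theorem \ref{thm:Beckmap}.

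I do not expect a genuine obstacle here: all the substance resides in Theorem \ref{thm:Beckmap}, which identifies the $\natural$-images of the Beck elements. The only points requiring a moment of care are matching the Beck linear order against the claimed order and noting that the nonzero scalar factors coming from \eqref{eq:main1}, \eqref{eq:main2}, and \eqref{eq:xcy} are harmless, both of which are routine.
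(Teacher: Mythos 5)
Your proposal is correct and is exactly the paper's argument: apply $\natural$ to the Beck PBW basis of Proposition \ref{prop:Beckpbw} and identify the images via \eqref{eq:main1} and \eqref{eq:xcy}, noting that the nonzero scalar factors are harmless. No further comment is needed.
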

   \begin{proof}  Apply the map $\natural $ to everything in   Proposition \ref{prop:Beckpbw},         and evaluate the result using \eqref{eq:main1}, \eqref{eq:xcy}. 
   \end{proof}
   
\begin{corollary}
\label{cor:xcyCom} 
The elements $\lbrace xC_ny \rbrace_{n \in \mathbb N}$ mutually commute with respect to the $q$-shuffle product.
\end{corollary}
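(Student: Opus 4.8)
The plan is to reduce the commutativity of the elements $\lbrace xC_ny\rbrace_{n\in\mathbb N}$ to the already-established commutativity of the Beck PBW elements $\lbrace E^{\rm Beck}_{n\delta}\rbrace_{n=1}^\infty$, transported across the isomorphism $\natural$. The key observation is that Theorem \ref{thm:Beckmap} identifies, up to a nonzero scalar, the $\natural$-image of $E^{\rm Beck}_{n\delta}$ with $xC_{n-1}y$. Since $\natural: U^+_q \to U$ is an algebra isomorphism (sending the ordinary product to the $q$-shuffle product), it preserves commutators, and so commuting elements map to commuting elements.

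First I would invoke Lemma \ref{lem:Bmc}, which asserts that the elements $\lbrace E^{\rm Beck}_{k\delta}\rbrace_{k=1}^\infty$ mutually commute in $U^+_q$. Next I would apply $\natural$ to the relation $E^{\rm Beck}_{i\delta}E^{\rm Beck}_{j\delta}=E^{\rm Beck}_{j\delta}E^{\rm Beck}_{i\delta}$ for arbitrary $i,j\geq 1$; because $\natural$ is an algebra homomorphism into the $q$-shuffle algebra, this yields
\begin{align*}
\natural(E^{\rm Beck}_{i\delta})\star \natural(E^{\rm Beck}_{j\delta}) = \natural(E^{\rm Beck}_{j\delta})\star \natural(E^{\rm Beck}_{i\delta}).
\end{align*}
Substituting the image formula \eqref{eq:xcy} from Theorem \ref{thm:Beckmap} and cancelling the nonzero scalar $\frac{\lbrack 2i\rbrack_q}{i}\frac{\lbrack 2j\rbrack_q}{j}q^{-2i-2j}(q-q^{-1})^{2i+2j-2}$ on both sides then gives $(xC_{i-1}y)\star(xC_{j-1}y)=(xC_{j-1}y)\star(xC_{i-1}y)$. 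Reindexing by $m=i-1$ and $n=j-1$, this establishes $(xC_my)\star(xC_ny)=(xC_ny)\star(xC_my)$ for all $m,n\geq 0$, which is exactly the claim.

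I do not anticipate a genuine obstacle here, since everything follows formally from the isomorphism property of $\natural$ and the prior commutativity lemma; the one point requiring mild care is confirming that the scalar prefactors in \eqref{eq:xcy} are nonzero for all $n\geq 1$, so that cancellation is legitimate. This holds because $q$ is not a root of unity, hence $\lbrack 2n\rbrack_q\neq 0$, and because $q-q^{-1}\neq 0$ for the same reason; the factor $n$ in the denominator is a nonzero integer in the characteristic-zero field $\mathbb F$. Thus the proof is a short deduction, and I would present it in a few lines as an immediate consequence of Lemma \ref{lem:Bmc} together with Theorem \ref{thm:Beckmap}.
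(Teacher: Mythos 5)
Your proposal is correct and follows exactly the paper's route: the paper's proof is the one-line citation ``By Lemma \ref{lem:Bmc} and \eqref{eq:xcy},'' i.e., transport the mutual commutativity of $\lbrace E^{\rm Beck}_{n\delta}\rbrace_{n=1}^\infty$ through $\natural$ using the image formula of Theorem \ref{thm:Beckmap}. Your added care about the nonzero scalar prefactors is sound and merely makes explicit what the paper leaves implicit.
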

\begin{proof} By Lemma \ref{lem:Bmc} and \eqref{eq:xcy}. 
\end{proof}

\begin{corollary} For $k, \ell \in \mathbb N$,
\begin{align}
x C_{k+\ell+1}&=\frac{(x C_\ell)\star (x C_k y) - (x C_k y) \star (x C_\ell)}{q-q^{-1}}, \label{eq:xCy1}
\\
C_{k+\ell+1}y &=\frac{(x C_k y)\star (C_\ell y) - (C_\ell y) \star (x C_k y)}{q-q^{-1}}. \label{eq:xCy2}
\end{align}
\end{corollary}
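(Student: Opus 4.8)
The plan is to obtain both identities by applying the algebra homomorphism $\natural$ to the defining relations \eqref{eq:con2} and \eqref{eq:con1} of the Beck PBW basis, and then evaluating each side using the explicit formulas from Proposition \ref{thm:mainres} and Theorem \ref{thm:Beckmap}. This follows the same template as the preceding corollaries in this section.

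First I would apply $\natural$ to \eqref{eq:con2}. Since $\natural$ is an algebra homomorphism, it carries the ordinary commutator to the commutator with respect to $\star$. Evaluating $\natural(E_{\ell\delta+\alpha_0}) = q^{-2\ell}(q-q^{-1})^{2\ell}\,xC_\ell$ and $\natural(E_{(k+\ell)\delta+\alpha_0}) = q^{-2(k+\ell)}(q-q^{-1})^{2(k+\ell)}\,xC_{k+\ell}$ by \eqref{eq:main1}, and $\natural(E^{\rm Beck}_{k\delta}) = \frac{\lbrack 2k\rbrack_q}{k}q^{-2k}(q-q^{-1})^{2k-1}\,xC_{k-1}y$ by \eqref{eq:xcy}, I would obtain, after the common prefactor $\frac{\lbrack 2k\rbrack_q}{k}q^{-2k-2\ell}$ and all but one power of $q-q^{-1}$ cancel,
\[
(xC_\ell)\star(xC_{k-1}y) - (xC_{k-1}y)\star(xC_\ell) = (q-q^{-1})\,xC_{k+\ell},
\]
valid for $k\geq 1$ and $\ell\geq 0$. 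The analogous computation applied to \eqref{eq:con1}, using $\natural(E_{\ell\delta+\alpha_1}) = q^{-2\ell}(q-q^{-1})^{2\ell}\,C_\ell y$, yields
\[
(xC_{k-1}y)\star(C_\ell y) - (C_\ell y)\star(xC_{k-1}y) = (q-q^{-1})\,C_{k+\ell}y,
\]
again for $k\geq 1$ and $\ell\geq 0$.

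Finally I would reindex by replacing $k$ with $k+1$ in both identities. Since the originals hold for $k\geq 1$, the reindexed versions hold for all $k\in\mathbb N$, and after dividing through by $q-q^{-1}$ they become exactly \eqref{eq:xCy1} and \eqref{eq:xCy2}. I note that the two displayed identities above are precisely the equations \eqref{lem:gen} from the proof of Theorem \ref{thm:Beckmap}, now that the substitution $\mathcal C_k = xC_{k-1}y$ has been established; so alternatively one could simply cite \eqref{lem:gen} and reindex.

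The only point requiring care is the bookkeeping of the scalar prefactors: one must check that the powers $q^{-2\ell-2k}(q-q^{-1})^{2\ell+2k-1}$ appearing on the commutator side of each relation match $q^{-2(k+\ell)}(q-q^{-1})^{2(k+\ell)}$ on the single-term side up to exactly one factor of $q-q^{-1}$, so that the normalization $(q-q^{-1})^{-1}$ in the statement is correct. There is no genuine obstacle here; the result is an immediate corollary of Theorem \ref{thm:Beckmap}.
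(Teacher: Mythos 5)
Your proposal is correct and follows the paper's own proof exactly: apply $\natural$ to \eqref{eq:con2} and \eqref{eq:con1}, evaluate using the closed forms from Proposition \ref{thm:mainres} and Theorem \ref{thm:Beckmap}, and reindex $k\mapsto k+1$. The prefactor bookkeeping you describe checks out, so nothing is missing.
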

 \begin{proof}  For   \eqref{eq:con2} and  \eqref{eq:con1},   apply the map $\natural $ to each side,   and evaluate the results using \eqref{eq:main2}, \eqref{eq:xcy}. This yields
  \eqref{eq:xCy1}, \eqref{eq:xCy2} after a change of variables in which $k$ is replaced by $k+1$.
   \end{proof}

\section{The alternating words and the Catalan elements}

\noindent In \cite{alternating} we introduced the alternating PBW basis for $U^+_q$. In this section we discuss how certain elements of  this PBW basis are
related to  $\lbrace C_n  \rbrace_{n \in \mathbb N}$ and  $\lbrace xC_n y \rbrace_{n \in \mathbb N}$.
\medskip

\noindent
We recall the alternating words in $\mathbb V$.

\begin{definition}\rm {\rm (See \cite[Definition~5.1]{alternating}.)}
A word
$u_1u_2\cdots u_n$ in $\mathbb V$ is called {\it alternating}
whenever $n\geq 1$ and 
$u_{i-1} \not=u_i$ for $2 \leq i \leq n$.
Thus an alternating word has the form $\cdots xyxy\cdots$.
\end{definition}

\begin{definition} {\rm (See \cite[Definition~5.2]{alternating}.)}
\label{def:WWGG}
\rm We name the alternating words as follows:
\begin{align*}
&W_0 = x, \qquad W_{-1} = xyx, \qquad W_{-2} = xyxyx, \qquad \ldots
\\
&W_1 = y, \qquad W_{2} = yxy, \qquad W_{3} = yxyxy, \qquad \ldots 
\\
&G_{1} = yx, \qquad G_{2} = yxyx,  \qquad G_3 = yxyxyx, \qquad \ldots 
\\
&\tilde G_{1} = xy, \qquad \tilde G_{2} =
xyxy,\qquad \tilde G_3 = xyxyxy, \qquad \ldots
\end{align*}
For notational convenience, define $G_0=1$ and $\tilde G_0=1$.
\end{definition}

\noindent In \cite[Propositions~5.7,~5.10,~5.11,~6.3,~8.1]{alternating} we displayed many relations involving the alternating words. These relations show how the alternating words are related to each other, with respect to the $q$-shuffle product. Using these relations and referring to the $q$-shuffle product,
in \cite[Theorem~10.1]{alternating} we recursively obtained each alternating
word as a polynomial in $x, y$. This result has the following consequence.

\begin{lemma} {\rm (See \cite[Theorem~8.3]{alternating}.)} Each alternating word in $\mathbb V$ is contained in $U$.
\end{lemma}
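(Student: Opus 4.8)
The plan is to show that each alternating word lies in $U$ by invoking Proposition~\ref{prop:orthog}, which reduces membership in $U$ to orthogonality against the ideal $J$. Since $U = J^\perp$ and $\langle\,,\,\rangle$ is nondegenerate, it suffices to verify that every alternating word is orthogonal to all of $J$. Because $J$ is the two-sided ideal generated by $J^+, J^-$, it is spanned by elements of the form $w_1 J^\pm w_2$ with $w_1, w_2$ words in $\mathbb V$, so the whole task comes down to checking $\langle W, w_1 J^\pm w_2\rangle = 0$ for an arbitrary alternating word $W$ and arbitrary words $w_1, w_2$. This is exactly the template used in the proof of Lemma~\ref{lem:inU}, so I would model the argument on that proof.

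First I would reduce to the homogeneous case: since $J^\pm \in \mathbb V_4$, the pairing is automatically zero unless $\mathrm{length}(W) = \mathrm{length}(w_1) + 4 + \mathrm{length}(w_2)$, so I may assume this length condition holds. Then I would split into cases according to whether $w_1$ and $w_2$ are trivial or nontrivial, using Lemma~\ref{lem:XY} to factor the bilinear form across the tensor-like decomposition $W = a W' b$ when $W$ has length at least two. The key combinatorial fact is that any length-four consecutive subword of an alternating word is itself alternating, hence of the form $xyxy$, $yxyx$, or a shorter overlap pattern; in particular it is never one of the monomials $xxxy, xxyx, xyxx, yxxx$ (the support of $J^+$) or $yyyx, yyxy, yxyy, xyyy$ (the support of $J^-$). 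This means that when both $w_1$ and $w_2$ are nontrivial I can peel off the first and last letters of $W$ and reduce to $\langle (\text{alternating word}), w_1' J^\pm w_2'\rangle$, driving an induction on length down to the base cases where one of $w_1, w_2$ is trivial.

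The main obstacle, as in Lemma~\ref{lem:inU}, is the boundary cases where $w_1$ or $w_2$ is trivial, since then one cannot peel a letter off that side and must instead directly compute the inner products of the initial (or terminal) length-four segment of $W$ against $J^\pm$. Here I would exploit that the relevant length-four prefixes/suffixes of an alternating word are $xyxy$ and $yxyx$, and observe directly from Definition~\ref{def:Jpm} that $\langle xyxy, J^\pm\rangle = 0$ and $\langle yxyx, J^\pm\rangle = 0$, since neither $xyxy$ nor $yxyx$ appears among the monomials of $J^+$ or $J^-$. Thus the boundary inner products vanish outright, which is cleaner than the corresponding step in Lemma~\ref{lem:inU} (where one had linear combinations of monomials overlapping the support of $J^\pm$). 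Combining the vanishing base cases with the inductive peeling establishes $\langle W, w_1 J^\pm w_2\rangle = 0$ in all cases, hence $W \in J^\perp = U$.

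Alternatively, and perhaps more efficiently, I would note that this lemma is cited as \cite[Theorem~8.3]{alternating}, and its proof there follows from the recursive expression of each alternating word as a $\star$-polynomial in $x, y$ given in \cite[Theorem~10.1]{alternating}: since $U$ is by Definition~\ref{def:Usub} the subalgebra of the $q$-shuffle algebra generated by $x$ and $y$, any element expressible as a $\star$-polynomial in $x, y$ automatically lies in $U$. This is the shortest route and is the one the paper signals by the citation, but the orthogonality argument above is self-contained within the present excerpt and is the approach I would write out if the recursive formula were not available.
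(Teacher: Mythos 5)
Your argument is correct, but it takes a genuinely different route from the one the paper relies on. The paper offers no argument beyond the citation: as the sentence preceding the lemma indicates, the intended proof is that \cite[Theorem~10.1]{alternating} expresses each alternating word recursively as a polynomial in $x,y$ with respect to $\star$, so membership in $U$ is immediate from Definition~\ref{def:Usub}. Your primary argument instead runs the orthogonality machine of Proposition~\ref{prop:orthog} and Lemma~\ref{lem:XY}, exactly as in the proof of Lemma~\ref{lem:inU}, and it works: the decisive point, which you identify correctly, is that every consecutive length-four subword of an alternating word is itself alternating, hence equal to $xyxy$ or $yxyx$, and neither of these monomials occurs in the support of $J^{+}$ or $J^{-}$ from Definition~\ref{def:Jpm}, so the relevant inner products vanish outright. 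This buys a proof that is self-contained within the present paper (modulo Proposition~\ref{prop:orthog}) and purely combinatorial, at the cost of a case analysis; the paper's route is a one-line deduction but leans on the nontrivial recursive formulas of \cite{alternating}. One remark: your induction with peeling of letters is more elaborate than necessary. Writing $W=W_1W_2W_3$ with $W_1,W_2,W_3$ of lengths $\mathrm{length}(w_1)$, $4$, $\mathrm{length}(w_2)$ and applying Lemma~\ref{lem:XY} twice gives $\langle W, w_1J^{\pm}w_2\rangle=\langle W_1,w_1\rangle\langle W_2,J^{\pm}\rangle\langle W_3,w_2\rangle$, and the middle factor is already zero since $W_2\in\{xyxy,yxyx\}$; this disposes of all cases at once, with no induction and no separate boundary cases.
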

\noindent Next we describe the alternating PBW basis for $U$.

\begin{proposition} {\rm (See \cite[Theorem~10.1]{alternating}.)}
A PBW basis for $ U$ is obtained by the elements
\begin{align*}
\lbrace W_{-i} \rbrace_{i \in \mathbb N}, \qquad 
\lbrace \tilde G_{j+1} \rbrace_{j\in \mathbb N}, \qquad  
\lbrace W_{k+1} \rbrace_{k\in \mathbb N}
\end{align*}
in any linear order $<$ that satisfies 
\begin{align*}
W_{-i} <  
\tilde G_{j+1} < W_{k+1}
\qquad \quad i,j,k \in \mathbb N.
\end{align*}
\end{proposition}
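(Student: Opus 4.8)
The plan is to verify directly the two defining conditions of a PBW basis from Definition \ref{def:pbw} — that the proposed ordered products span $U$ and that they are linearly independent — and to reduce the independence to a Hilbert-series count against the Damiani PBW basis. First I would invoke the cited fact (\cite[Theorem~8.3]{alternating}) that every alternating word lies in $U$, so that the families $\{W_{-i}\}$, $\{\tilde G_{j+1}\}$, $\{W_{k+1}\}$ are genuine elements of $U$. Since $W_0=x$ and $W_1=y$ are among them and generate $U$ under $\star$, the span $S$ of the ordered products contains $1$, and it suffices to show that $S$ is closed under $\star$-right-multiplication by $x$ and by $y$ (whence $S$ contains every $\star$-word and so $S=U$), together with the matching count below.

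For the spanning step I would use the relations among the alternating words collected in \cite[Propositions~5.7,~5.10,~5.11,~6.3,~8.1]{alternating}. From these I would extract \emph{straightening rules}: whenever a product of two generators appears out of order relative to the block constraint $W_{-i}<\tilde G_{j+1}<W_{k+1}$, the relations rewrite it as a linear combination of ordered monomials of the same total length. Applying these rules to $M\star g$ (for an ordered monomial $M$ and a generator $g$) and iterating, one reorders the factors into the prescribed block order; termination follows from a standard well-order argument in which the total degree is preserved while a secondary ``disorder'' statistic strictly decreases. This shows that the ordered products span each homogeneous component $U_n=\mathbb V_n\cap U$.

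For linear independence I would avoid direct computation and instead match Hilbert series. Because the grading on $U$ is the length grading, $\dim U_n$ is determined by the Damiani PBW basis (Proposition \ref{prop:PBWbasis}, Corollary \ref{prop:PBWbasisC}) as the number of ordered Damiani monomials of length $n$, i.e.
\[
\sum_{n\in\mathbb N}(\dim U_n)\,t^n=\prod_{g}\frac{1}{1-t^{\deg g}},
\]
the product being over the Damiani generators $g$. The decisive observation is that the alternating generators carry exactly the same multiset of degrees: each odd length $2m+1$ contributes two generators, $W_{-m}$ and $W_{m+1}$ (matching $E_{m\delta+\alpha_0}$ and $E_{m\delta+\alpha_1}$), while each even length $2m$ with $m\ge 1$ contributes the single generator $\tilde G_m$ (matching $E_{m\delta}$). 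Hence the number of ordered products of alternating generators of length $n$ equals $\dim U_n$. Combined with the spanning step, having exactly $\dim U_n$ spanning vectors in each finite-dimensional component $U_n$ forces them to be a basis, and the claim that \emph{any} linear order meeting the block constraint works then follows because the intra-block straightening introduces no correction terms (the generators within each block commute with respect to $\star$; for the $\tilde G$-block this is consistent with Corollary \ref{cor:xcyCom}).

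The main obstacle is the spanning/straightening step: one must check that the alternating-word relations genuinely suffice to reorder an arbitrary $\star$-monomial into block order, that the rewriting terminates, and that reordering \emph{within} a block is free (so that all admissible orders yield the same conclusion). The cross-block straightening is lower-triangular in total length, which is precisely what makes the Hilbert-series count decisive; once spanning and the degree match are in place, linear independence is automatic.
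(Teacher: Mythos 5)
The paper does not prove this proposition at all: it is imported verbatim from \cite[Theorem~10.1]{alternating}, so there is no internal argument to compare yours against. Judged on its own, your sketch has one solid half and one genuinely open half. The linear-independence half is correct and clean: the alternating generators carry exactly the same multiset of degrees as the Damiani generators (two generators in each odd degree $2m+1$, namely $W_{-m}$ and $W_{m+1}$, matching $E_{m\delta+\alpha_0}$ and $E_{m\delta+\alpha_1}$; one generator $\tilde G_m$ in each even degree $2m$, matching $E_{m\delta}$), the grading on $U$ is the length grading, and each homogeneous component is finite-dimensional, so spanning plus the matching count of ordered monomials forces a basis. That reduction is exactly the right move.

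The gap is the spanning step, which you flag as ``the main obstacle'' but do not carry out, and the difficulty is more specific than your sketch suggests. The straightening relations in \cite{alternating} for commuting the blocks past each other (e.g.\ the relations governing $W_{-i}\star W_{k+1}$) do not stay inside the three chosen families: they produce terms in the fourth family $\lbrace G_n\rbrace$ of alternating words, which is \emph{not} among the proposed PBW generators. So the rewriting system must simultaneously reorder factors and eliminate every occurrence of $G_n$ using the further relations expressing $G_n$ through $\tilde G$'s and $W$'s, and termination of that combined process needs its own argument; a single ``disorder statistic decreases'' claim does not obviously cover it. Two smaller imprecisions: the phrase ``lower-triangular in total length'' is vacuous here, since the relations are homogeneous and total length is preserved exactly (the triangularity must be with respect to some other statistic); and the intra-block commutativity you need for the two $W$-blocks (so that any admissible order works) is not established anywhere in the present paper --- only the mutual commutativity of the $\tilde G_n$ appears here --- so it too must be pulled from \cite[Proposition~5.10]{alternating}. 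None of this is fatal, but as written the proof is a program whose hardest component is deferred to the very reference the proposition is quoted from.
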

\noindent For the rest of this section, we focus on the elements $\lbrace \tilde G_n\rbrace_{n \in \mathbb N}$. In \cite[Section~11]{alternating} we described how the elements
 $\lbrace C_n \rbrace_{n \in \mathbb N}$ are related to the elements
$\lbrace \tilde G_n \rbrace_{n \in \mathbb N}$. We will review this description, and then describe how the elements $\lbrace xC_ny \rbrace_{n \in \mathbb N}$ are related to the elements
$\lbrace \tilde G_n \rbrace_{n \in \mathbb N}$. Our main result on this topic is Proposition \ref{thm:GxCX}  below.
\begin{lemma} {\rm (See \cite[Proposition~5.10]{alternating}.)} The elements $\lbrace \tilde G_{n} \rbrace_{n \in \mathbb N}$ mutually commute with respect to the
$q$-shuffle product.
\end{lemma}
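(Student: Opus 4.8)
The plan is to reduce the assertion to the commutativity of the Catalan elements, which is already in hand. Let $\mathcal Z$ denote the subalgebra of the $q$-shuffle algebra $\mathbb V$ generated under $\star$ by $\lbrace C_k \rbrace_{k \in \mathbb N}$. By Corollary \ref{cor:com} the generators $C_k$ mutually commute, so $\mathcal Z$ is commutative; consequently any two elements of $\mathcal Z$ commute with respect to $\star$. It therefore suffices to show that $\tilde G_n \in \mathcal Z$ for all $n \geq 1$, since then $\tilde G_m \star \tilde G_n = \tilde G_n \star \tilde G_m$ for all $m,n$.

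To prove $\tilde G_n \in \mathcal Z$, I would invoke the relation between $\lbrace C_n \rbrace_{n \in \mathbb N}$ and $\lbrace \tilde G_n \rbrace_{n \in \mathbb N}$ recorded in \cite[Proposition~11.8]{alternating}. Read at the level of generating functions, this relation ties $1 + \sum_{n \geq 1} C_n t^n$ to $1 + \sum_{n \geq 1} \tilde G_n t^n$ in a degree-preserving way; matching coefficients of $t^n$ and inducting on $n$ then expresses $\tilde G_n$ as a $\star$-polynomial in $C_1, C_2, \ldots, C_n$. Because the $C_k$ commute, the order of the factors in such a polynomial is immaterial, so the expression is unambiguous. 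This places each $\tilde G_n$ in $\mathcal Z$ and completes the argument. I note that this route uses only Corollary \ref{cor:com} and the imported identity \cite[Proposition~11.8]{alternating}; it does not use Proposition \ref{thm:GxCX}, so no circularity is introduced.

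The step I expect to be the main obstacle is extracting from \cite[Proposition~11.8]{alternating} an honest \emph{polynomial} expression for $\tilde G_n$ in $C_1, \ldots, C_n$, rather than merely a formal power series identity. Two points need care: first, one must check that the $C$-to-$\tilde G$ relation is triangular enough that the induced recursion closes with $\tilde G_n$ depending only on $C_1, \ldots, C_n$, so that the recursion terminates; and second, any manipulation of the generating functions, in particular any inversion or exponentiation used to invert the relation, must be justified formally in the noncommutative algebra $\mathbb V$, which is legitimate here precisely because the coefficients $C_k$ commute by Corollary \ref{cor:com}. Once the polynomial expression is secured, the conclusion is immediate. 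As a self-contained alternative that avoids \cite[Proposition~11.8]{alternating}, one could instead prove the commutativity directly from the $q$-shuffle relations among the alternating words by induction on their length, which is the approach of \cite[Proposition~5.10]{alternating}.
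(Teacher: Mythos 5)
The paper offers no proof of this lemma at all: it is imported verbatim from \cite[Proposition~5.10]{alternating}, where it is established by direct computation with the $q$-shuffle relations among the alternating words. Your route is genuinely different: you place each $\tilde G_n$ inside the commutative subalgebra $\mathcal Z$ generated by the $C_k$ (commutative by Corollary \ref{cor:com}), by extracting from the identity $\tilde G(qt)\star C(-t)\star \tilde G(q^{-1}t)=1$ the recursion $(q^n+q^{-n})\tilde G_n = -\sum q^{i-k}(-1)^j\,\tilde G_i\star C_j\star \tilde G_k$ over triples with $i,k<n$, which closes by induction since $q^n+q^{-n}\neq 0$ ($q$ not a root of unity). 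This is sound, and is in effect a rederivation of Corollary \ref{prop:CGpoly}(ii) followed by the observation that a subalgebra generated by commuting elements is commutative; within the logical structure of the present paper, where Corollary \ref{cor:com} and Proposition \ref{lem:GGC} are independently available, there is no internal circularity. The one caveat you should state more squarely: the circularity risk is not with Proposition \ref{thm:GxCX} (which you correctly avoid) but with the provenance of \cite[Proposition~11.8]{alternating} itself, which sits six sections after Proposition 5.10 in that paper and whose derivation there plausibly relies on the commutation relations among the $\tilde G_n$. So your argument is a legitimate alternative derivation given the facts this paper imports, but it could not replace the original proof in \cite{alternating}; the direct induction you mention at the end is the only genuinely independent route. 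What your approach buys is a conceptual explanation of \emph{why} the $\tilde G_n$ commute — they live in the same commutative subalgebra as the $C_n$ (cf.\ Corollary \ref{cor:CCG}) — at the cost of resting on the heavier generating-function identity.
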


\begin{definition}\rm We define some generating functions in the indeterminate $t$:
\begin{align*}
C(t) &= \sum_{n \in \mathbb N} C_n t^n,
\qquad \qquad
\tilde G(t) = \sum_{n \in \mathbb N} \tilde G_n t^n.
\end{align*}
\end{definition}

 \begin{proposition} \label{lem:GGC} {\rm (See       \cite[Lemma~9.12~and~Proposition~11.8]{alternating}.)}We have
 \begin{align}
 \label{eq:GGC}
 \tilde G(qt) \star C(-t) \star  \tilde G(q^{-1} t) =1.
 \end{align}
 \end{proposition}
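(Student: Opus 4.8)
The plan is to carry out the entire computation inside a single commutative subalgebra, where $\star$-products of generating functions behave like products of ordinary power series and the asserted identity can be checked by comparing logarithms. Concretely, let $\mathcal U$ denote the $\star$-subalgebra of $\mathbb V$ generated by $\lbrace x C_n y\rbrace_{n\in\mathbb N}$; this subalgebra is commutative by Corollary \ref{cor:xcyCom}. By Corollary \ref{lem:Poly}(i) every $C_n$ lies in $\mathcal U$, and the exponential formula for $\tilde G$ recorded in the Introduction (itself obtained from the main result together with \cite{alternating}) exhibits $\tilde G(t)$ as the exponential of a generating function with coefficients in $\mathcal U$, so every $\tilde G_n$ lies in $\mathcal U$ as well. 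Hence the three factors $\tilde G(qt)$, $C(-t)$, $\tilde G(q^{-1}t)$ are generating functions over the commutative algebra $\mathcal U$, each with constant coefficient $1$. In this commutative setting ${\rm exp}$ converts sums into $\star$-products, so a product of such generating functions equals $1$ if and only if the sum of their logarithms is $0$; thus it suffices to show
\begin{align*}
{\rm ln}\,\tilde G(qt) + {\rm ln}\,C(-t) + {\rm ln}\,\tilde G(q^{-1}t) = 0.
\end{align*}

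Next I would read off the three logarithms. Corollary \ref{cor:xCyC} gives ${\rm ln}\,C(t) = \sum_{k\geq 1} \frac{\lbrack 2k\rbrack_q}{k}\, x C_{k-1} y\, t^k$, hence ${\rm ln}\,C(-t) = \sum_{k\geq 1} \frac{(-1)^k\lbrack 2k\rbrack_q}{k}\, x C_{k-1} y\, t^k$. The $\tilde G$-exponential formula gives ${\rm ln}\,\tilde G(t) = -\sum_{k\geq 1} \frac{(-1)^k \lbrack k\rbrack_q}{k}\, x C_{k-1} y\, t^k$, so substituting $qt$ and $q^{-1}t$ and adding produces the factor $q^k + q^{-k}$ in the coefficient of $t^k$. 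Collecting the coefficient of $t^k$ in the displayed sum therefore yields
\begin{align*}
\frac{(-1)^k}{k}\Bigl(\lbrack 2k\rbrack_q - \lbrack k\rbrack_q (q^k + q^{-k})\Bigr)\, x C_{k-1} y,
\end{align*}
which vanishes for every $k\geq 1$ by the elementary $q$-number identity $\lbrack 2k\rbrack_q = \lbrack k\rbrack_q(q^k+q^{-k})$. Hence the sum of logarithms is $0$ and the proposition follows.

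The genuine content of the argument is not the final cancellation, which is a one-line computation, but rather the two exponential formulas feeding into it and the structural fact that all coefficients lie in the common commutative subalgebra $\mathcal U$, which is precisely what legitimises the passage to logarithms. The formula for $C(t)$ is Corollary \ref{cor:xCyC}, while the formula for $\tilde G(t)$ is imported from \cite{alternating}; so I expect the main obstacle to be securing that $\tilde G$-exponential input rather than the subsequent bookkeeping. I would also note that, given Corollary \ref{cor:xCyC}, Proposition \ref{lem:GGC} and the $\tilde G$-exponential formula are interchangeable, so an equally valid route is to invoke \cite[Lemma~9.12~and~Proposition~11.8]{alternating} directly and to regard the logarithm computation above as the passage from one to the other.
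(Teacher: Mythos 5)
There is a genuine gap: your main argument is circular relative to this paper's deductive structure. The key input you use — the exponential formula $\ln \tilde G(t) = -\sum_{k\geq 1}\frac{(-1)^k[k]_q}{k}\,xC_{k-1}y\,t^k$, i.e.\ \eqref{eq:E1b} — is not an independently available fact here. It is Proposition~\ref{thm:GxCX} of this paper, and the paper proves it \emph{by applying $\ln$ to \eqref{eq:GGC}} and solving for the coefficients $h_k$ using exactly the identity $[2k]_q=[k]_q(q^k+q^{-k})$. In other words, you have run the paper's proof of Proposition~\ref{thm:GxCX} in reverse and offered it as a proof of its own hypothesis. The same circularity infects your claim that each $\tilde G_n$ lies in the commutative subalgebra $\mathcal U$ generated by $\lbrace xC_ny\rbrace$, which you justify via the same exponential formula. (That particular point could be repaired without circularity by combining Corollary~\ref{prop:CGpoly}(ii), which is imported from \cite{alternating}, with Corollary~\ref{lem:Poly}(i); but the central reliance on \eqref{eq:E1b} cannot.)

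The paper itself gives no internal proof of Proposition~\ref{lem:GGC}: it is cited from \cite[Lemma~9.12 and Proposition~11.8]{alternating}, where it is established by the combinatorics of alternating and Catalan words, independently of the Beck elements and of Theorem~\ref{thm:Beckmap}. Your closing remark — that one could simply invoke that citation and view your logarithm computation as the passage from \eqref{eq:GGC} to \eqref{eq:E1b} — is exactly right, and is in fact what the paper does; but as written, that remark is an escape hatch rather than the proof you present. To make your route legitimate you would need to establish the $\tilde G$-exponential formula from a source that does not itself rest on \eqref{eq:GGC} (for instance, a direct derivation from \cite[Proposition~5.27]{basFMA} or from the recursions in \cite{alternating}); the arithmetic cancellation $[2k]_q-[k]_q(q^k+q^{-k})=0$ that you carry out is correct but is not where the content lies.
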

 
 \begin{corollary} \label{lem:recGC} {\rm (See \cite[Theorem~11.14]{alternating}.)} For $n \in \mathbb N$,
 \begin{align}
 \label{eq:CG}
 0 &= \sum_{i=0}^n (-1)^i \lbrack 2n-i\rbrack_q C_i \star \tilde G_{n-i}.
 \end{align}
 \end{corollary}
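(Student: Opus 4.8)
The plan is to derive the recursion \eqref{eq:CG} directly from the generating function identity \eqref{eq:GGC} by extracting the coefficient of $t^n$ from a suitable rearrangement of \eqref{eq:GGC} under the substitutions $t\mapsto qt$ and $t\mapsto q^{-1}t$.

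First I would establish that the rescaled series $C(-q^{\pm 1}t)$ and $\tilde G(q^{\pm 2}t)$ all mutually commute with respect to $\star$. Since the elements $\tilde G_n$ mutually commute, they generate a commutative subalgebra $\mathcal T$ of the $q$-shuffle algebra, and the coefficients of $\tilde G(qt)$ and $\tilde G(q^{-1}t)$ lie in $\mathcal T$. Solving \eqref{eq:GGC} for $C(-t)$ gives $C(-t) = \bigl(\tilde G(qt)\star\tilde G(q^{-1}t)\bigr)^{-1}$, where I have used that $\tilde G(qt)$ and $\tilde G(q^{-1}t)$ commute. The product $\tilde G(qt)\star\tilde G(q^{-1}t)$ has coefficients in $\mathcal T$ and constant coefficient $1$, so its inverse again has coefficients in $\mathcal T$. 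Hence each $C_n\in\mathcal T$, and therefore every rescaling of $C(t)$ and of $\tilde G(t)$ has coefficients in the commutative algebra $\mathcal T$ and these series mutually commute.

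Next I would substitute $t\mapsto qt$ and $t\mapsto q^{-1}t$ into \eqref{eq:GGC}, obtaining $\tilde G(q^2 t)\star C(-qt)\star\tilde G(t)=1$ and $\tilde G(t)\star C(-q^{-1}t)\star\tilde G(q^{-2}t)=1$. Using the commutativity just established, I can rearrange both so as to isolate $\tilde G(t)^{-1}$, which yields
\[
C(-qt)\star\tilde G(q^2 t) \;=\; \tilde G(t)^{-1} \;=\; C(-q^{-1}t)\star\tilde G(q^{-2}t).
\]
Equating the coefficients of $t^n$ at the two ends gives $\sum_{i=0}^n (-1)^i q^{2n-i}\,C_i\star\tilde G_{n-i} = \sum_{i=0}^n (-1)^i q^{-(2n-i)}\,C_i\star\tilde G_{n-i}$; subtracting one side from the other and dividing by $q-q^{-1}$ produces exactly \eqref{eq:CG}.

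The main obstacle is the commutativity step. The target coefficient $\lbrack 2n-i\rbrack_q$ couples the powers $q^{2n-i}$ and $q^{-(2n-i)}$ to the \emph{same} ordered product $C_i\star\tilde G_{n-i}$, and this coupling is only achievable after moving $C$ past $\tilde G$. Without the observation that \eqref{eq:GGC} forces $C(-t)$ into the commutative subalgebra $\mathcal T$, the two substitutions of \eqref{eq:GGC} would instead deliver the opposite orderings $C\star\tilde G$ and $\tilde G\star C$ paired with mismatched powers of $q$, and these would not assemble into the single-ordered $q$-bracket form of \eqref{eq:CG}. Once commutativity is in hand, the remaining coefficient extraction is routine.
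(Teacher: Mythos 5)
Your proposal is correct. The paper itself gives no internal proof of Corollary \ref{lem:recGC}; it simply cites \cite[Theorem~11.14]{alternating}, so what you have supplied is a self-contained derivation from Proposition \ref{lem:GGC}, which is exactly the role the corollary plays in the paper's exposition. Your handling of the one genuinely delicate point is right: the target identity \eqref{eq:CG} pairs $q^{2n-i}$ and $q^{-(2n-i)}$ with the \emph{same} ordered product $C_i \star \tilde G_{n-i}$, and this requires knowing that the $C_i$ commute with the $\tilde G_j$. Your observation that \eqref{eq:GGC} itself forces $C(-t) = \tilde G(qt)^{-1}\star \tilde G(q^{-1}t)^{-1}$, hence places every $C_n$ in the commutative subalgebra generated by $\lbrace \tilde G_k\rbrace$, is a clean way to get this without invoking Corollary \ref{prop:CGpoly} (which the paper states only afterward, so using it here would risk circularity within this paper's ordering). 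The two rescalings $t\mapsto q^{\pm 1}t$, the identification of both resulting products with $\tilde G(t)^{-1}$, and the subtraction and division by $q-q^{-1}$ all check out: the coefficient of $t^n$ in $C(-qt)\star\tilde G(q^2t)$ is $\sum_i(-1)^i q^{2n-i}C_i\star\tilde G_{n-i}$ and in $C(-q^{-1}t)\star\tilde G(q^{-2}t)$ is $\sum_i(-1)^i q^{-(2n-i)}C_i\star\tilde G_{n-i}$, so their difference is $(q-q^{-1})$ times the sum in \eqref{eq:CG}.
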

 
 \noindent The following result is obtained by rearranging the terms in \eqref{eq:CG}.
 \begin{corollary}\label{cor:CGGC} For $n\geq 1$,
 \begin{align*}
  C_n &= \frac{-1}{\lbrack n \rbrack_q} \sum_{i=0}^{n-1} (-1)^{n-i} \lbrack 2n-i \rbrack_q C_i \star \tilde G_{n-i},
  \\
 \tilde G_n &= \frac{-1}{\lbrack 2n \rbrack_q} \sum_{i=1}^n (-1)^{i} \lbrack 2n-i \rbrack_q C_i \star \tilde G_{n-i}.
 \end{align*}
 \end{corollary}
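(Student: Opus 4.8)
The plan is to derive both identities directly from the single relation \eqref{eq:CG} of Corollary \ref{lem:recGC} by isolating one extreme term of its sum in each case. First I would record the two facts that cause those extreme terms to collapse: $C_0 = 1$ from Example \ref{ex:CnIntro}, and $\tilde G_0 = 1$ from the convention in Definition \ref{def:WWGG}, together with the identity $1 \star v = v \star 1 = v$, which holds for every $v \in \mathbb V$ under the $q$-shuffle product. With these in hand, each formula is a one-line rearrangement.

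For the first formula I would split off the $i=n$ summand of \eqref{eq:CG}. Since $\tilde G_0 = 1$, that summand equals $(-1)^n \lbrack n \rbrack_q \, C_n \star 1 = (-1)^n \lbrack n \rbrack_q \, C_n$, so \eqref{eq:CG} rearranges to $(-1)^n \lbrack n \rbrack_q \, C_n = -\sum_{i=0}^{n-1} (-1)^i \lbrack 2n-i \rbrack_q \, C_i \star \tilde G_{n-i}$. Dividing through by $(-1)^n \lbrack n \rbrack_q$ and using $(-1)^i / (-1)^n = (-1)^{n-i}$ yields the claimed expression for $C_n$. For the second formula I would instead split off the $i=0$ summand; since $C_0 = 1$, that summand equals $\lbrack 2n \rbrack_q \, 1 \star \tilde G_n = \lbrack 2n \rbrack_q \, \tilde G_n$, so \eqref{eq:CG} rearranges to $\lbrack 2n \rbrack_q \, \tilde G_n = -\sum_{i=1}^{n} (-1)^i \lbrack 2n-i \rbrack_q \, C_i \star \tilde G_{n-i}$, and dividing by $\lbrack 2n \rbrack_q$ gives the claimed expression for $\tilde G_n$.

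The only point requiring justification beyond this bookkeeping is that the two divisions are legitimate, namely that $\lbrack n \rbrack_q$ and $\lbrack 2n \rbrack_q$ are nonzero for $n \geq 1$. This is immediate from the standing assumption that $q$ is nonzero and not a root of unity, under which $\lbrack m \rbrack_q = (q^m - q^{-m})/(q - q^{-1}) \neq 0$ for all $m \geq 1$. Accordingly I do not anticipate any genuine obstacle: the statement is a purely formal consequence of Corollary \ref{lem:recGC}, and the entire proof consists of the two term-isolation steps described above.
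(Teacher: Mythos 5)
Your proposal is correct and matches the paper's approach: the paper states that Corollary \ref{cor:CGGC} "is obtained by rearranging the terms in \eqref{eq:CG}," which is exactly your term-isolation argument (splitting off the $i=n$ term using $\tilde G_0=1$ for the first identity and the $i=0$ term using $C_0=1$ for the second, then dividing by the nonzero quantities $\lbrack n\rbrack_q$ and $\lbrack 2n\rbrack_q$). The sign and index bookkeeping checks out.
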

 \noindent Using Corollary \ref{cor:CGGC}  the elements $\lbrace C_n \rbrace_{n=1}^\infty$ and the elements $\lbrace \tilde G_n \rbrace_{n=1}^\infty$ can be
 recursively obtained from each other. This is illustrated in Example  \ref{lem:CG}.
 
 \begin{corollary}
\label{prop:CGpoly} {\rm (See \cite[Corollary~11.11]{alternating}.)}
For $n\geq 1$ the following hold in the $q$-shuffle algebra $\mathbb V$.
\begin{enumerate}
\item[\rm (i)] $C_n$ is a homogeneous polynomial in 
$\tilde G_1, \tilde G_2,\ldots, \tilde G_n$ that has total degree $n$,
where we view $\tilde G_k$ as having degree $k$ for $1 \leq k \leq n$;
\item[\rm (ii)] $\tilde G_n$ is a homogeneous polynomial in 
$C_1, C_2,\ldots, C_n$ that has total degree $n$, where we view
$C_k$ as having degree $k$ for $1 \leq k \leq n$.
\end{enumerate}
\end{corollary}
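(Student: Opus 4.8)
The statement to prove is Corollary~\ref{prop:CGpoly}, asserting that for $n \geq 1$, each $C_n$ is a homogeneous polynomial of total degree $n$ in $\tilde G_1, \ldots, \tilde G_n$ (with $\deg \tilde G_k = k$), and symmetrically each $\tilde G_n$ is such a polynomial in $C_1, \ldots, C_n$. This is a purely structural consequence of the recursions already in hand, so my plan is to avoid the generating-function identity \eqref{eq:GGC} directly and instead run induction on $n$ using the two explicit recursions in Corollary~\ref{cor:CGGC}.

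First I would prove part (i). Fix $n \geq 1$ and assume inductively that $C_i$ is a homogeneous polynomial in $\tilde G_1, \ldots, \tilde G_i$ of total degree $i$ for every $1 \leq i \leq n-1$ (the base case $C_1 = \tilde G_1$ follows from the $n=1$ instance of the recursion, since the sum has only the $i=0$ term $\lbrack 2 \rbrack_q C_0 \star \tilde G_1$ and $C_0 = 1$). The recursion
\begin{align*}
C_n = \frac{-1}{\lbrack n \rbrack_q} \sum_{i=0}^{n-1} (-1)^{n-i} \lbrack 2n-i \rbrack_q\, C_i \star \tilde G_{n-i}
\end{align*}
expresses $C_n$ as a scalar combination of the $q$-shuffle products $C_i \star \tilde G_{n-i}$ for $0 \leq i \leq n-1$. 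By the inductive hypothesis each $C_i$ (including $C_0 = 1$) is a homogeneous polynomial in $\tilde G_1, \ldots, \tilde G_i$ of degree $i$; multiplying by the degree-$(n-i)$ element $\tilde G_{n-i}$ via $\star$ yields a homogeneous polynomial in $\tilde G_1, \ldots, \tilde G_n$ of total degree $n$. Since a scalar combination of homogeneous degree-$n$ polynomials is again homogeneous of degree $n$, the claim for $C_n$ follows. Here I am using that the $\lbrace \tilde G_k \rbrace$ mutually commute under $\star$, so that ``polynomial'' is unambiguous. Note $\lbrack n \rbrack_q \neq 0$ since $q$ is not a root of unity, so the scalar is well-defined.

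Part (ii) is entirely symmetric: I would run the same induction using the second recursion
\begin{align*}
\tilde G_n = \frac{-1}{\lbrack 2n \rbrack_q} \sum_{i=1}^n (-1)^i \lbrack 2n-i \rbrack_q\, C_i \star \tilde G_{n-i},
\end{align*}
after isolating the $i=n$ term. That term contributes $\frac{-1}{\lbrack 2n \rbrack_q}(-1)^n \lbrack n \rbrack_q\, C_n \star \tilde G_0 = \frac{-1}{\lbrack 2n \rbrack_q}(-1)^n \lbrack n \rbrack_q\, C_n$, which is already a homogeneous degree-$n$ polynomial in $C_n$ alone, and the remaining terms $1 \leq i \leq n-1$ involve $C_i \star \tilde G_{n-i}$ where $\tilde G_{n-i}$ is, by the inductive hypothesis, a homogeneous degree-$(n-i)$ polynomial in $C_1, \ldots, C_{n-i}$, so each such product is homogeneous of degree $n$ in $C_1, \ldots, C_n$. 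Summing gives the result for $\tilde G_n$.

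The only subtlety, and the one point I would be careful to record rather than a genuine obstacle, is the bookkeeping of degrees under the $q$-shuffle product: because the grading $\lbrace \mathbb V_m \rbrace_{m \in \mathbb N}$ is also a grading for the $q$-shuffle algebra, and both $C_k \in \mathbb V_{2k}$ and $\tilde G_k \in \mathbb V_{2k}$ lie in the $2k$-homogeneous component, the ``total degree $n$'' assignment ($\deg C_k = \deg \tilde G_k = k$) is exactly half the $\mathbb V$-grading degree and is preserved by $\star$. This makes homogeneity automatic and forces all cross-terms to land in the correct degree, so no cancellation argument is needed. Thus the corollary reduces cleanly to the two recursions of Corollary~\ref{cor:CGGC} together with a routine induction, and there is no hard step.
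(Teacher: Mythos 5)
Your proof is correct and follows the route the paper itself intends: the paper states this corollary with only a citation to \cite[Corollary~11.11]{alternating}, but the remark that the elements $\lbrace C_n \rbrace_{n=1}^\infty$ and $\lbrace \tilde G_n \rbrace_{n=1}^\infty$ can be recursively obtained from each other via Corollary \ref{cor:CGGC}, together with the proofs of the analogous Corollaries \ref{lem:Poly} and \ref{prop:xCyGpoly} (induction on the corresponding identity), shows that your induction on the two recursions of Corollary \ref{cor:CGGC} is exactly the intended argument. The only blemish is the base case: the $n=1$ instance of the recursion gives $C_1 = \lbrack 2 \rbrack_q \tilde G_1$ rather than $C_1 = \tilde G_1$, which is harmless since a scalar multiple is still a homogeneous polynomial of degree $1$.
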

\noindent We emphasize that the above homogeneous polynomials are with respect to the $q$-shuffle product.
 \medskip
 
\noindent Next we describe how the elements $\lbrace xC_ny \rbrace_{n \in \mathbb N}$ are related to the elements $\lbrace \tilde G_n \rbrace_{n \in \mathbb N}$.
Our description gives a variation on a formula in \cite[Proposition~5.27]{basFMA} involving a certain generating function $\mathcal G_+(u)$ that corresponds to 
$\tilde G(t)$.

\begin{proposition} \label{thm:GxCX} The following holds in the $q$-shuffle algebra $\mathbb V$:
\begin{align}
\label{eq:E1b}
&{\rm exp}  \Biggl(-\sum_{k=1}^\infty \frac{ (-1)^k \lbrack  k \rbrack_q }{k} x C_{k-1} y t^k \Biggr) = 1+ \sum_{k=1}^\infty \tilde G_k t^k.
\end{align}
\end{proposition}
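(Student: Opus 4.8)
The plan is to reduce the identity \eqref{eq:E1b} to a generating-function relation among the Catalan elements $\lbrace C_n\rbrace$ and the alternating elements $\lbrace \tilde G_n\rbrace$ that is already available in the excerpt, namely Proposition \ref{lem:GGC}. The strategy is to avoid direct combinatorics with the Catalan words and instead manipulate formal generating functions over the $q$-shuffle algebra. First I would introduce the generating function $F(t) = \sum_{k=1}^\infty \frac{\lbrack 2k\rbrack_q}{k}\, xC_{k-1}y\, t^k$ and recall from Corollary \ref{cor:xCyC} that ${\rm exp}\,F(t) = 1 + \sum_{k=1}^\infty C_k t^k = C(t)$, where the exponential is with respect to $\star$. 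Since the elements $\lbrace xC_ny\rbrace$ mutually commute under $\star$ (Corollary \ref{cor:xcyCom}), all exponentials built from them lie in a commutative subalgebra of generating functions, so I may freely take logarithms and add exponents.

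The key step will be to extract $F(t)$ as a logarithm and then relate the target exponent in \eqref{eq:E1b} to it through the substitution $t \mapsto qt$ and $t\mapsto q^{-1}t$ appearing in Proposition \ref{lem:GGC}. Explicitly, Corollary \ref{cor:xCyC} gives $F(t) = {\rm ln}\,C(t)$ (in the $\star$ sense), so $\frac{\lbrack 2k\rbrack_q}{k}\,xC_{k-1}y$ is the coefficient of $t^k$ in ${\rm ln}\,C(t)$. Taking $\star$-logarithms of Proposition \ref{lem:GGC} and using that $\tilde G(t)$ and $C(t)$ each lie in the relevant commutative subalgebra, I obtain
\begin{align*}
{\rm ln}\,\tilde G(qt) + {\rm ln}\,\tilde G(q^{-1}t) = -\,{\rm ln}\,C(-t).
\end{align*}
The right-hand side equals $-F(-t)$, whose degree-$k$ coefficient is $-(-1)^k\frac{\lbrack 2k\rbrack_q}{k}\,xC_{k-1}y$. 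If I write ${\rm ln}\,\tilde G(t) = \sum_{k\geq 1} g_k t^k$, then the left side contributes $\sum_{k\geq 1}(q^k + q^{-k}) g_k t^k$, and matching coefficients gives $(q^k+q^{-k}) g_k = -(-1)^k \frac{\lbrack 2k\rbrack_q}{k}\,xC_{k-1}y$. Using $q^k+q^{-k}$ together with $\lbrack 2k\rbrack_q = (q^k+q^{-k})\lbrack k\rbrack_q$, this simplifies to $g_k = -\frac{(-1)^k \lbrack k\rbrack_q}{k}\,xC_{k-1}y$. Hence ${\rm ln}\,\tilde G(t)$ equals exactly the exponent on the left of \eqref{eq:E1b}, and exponentiating yields the claim.

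I expect the main obstacle to be justifying that the $\star$-logarithm behaves additively across the product in Proposition \ref{lem:GGC}: the factors $\tilde G(qt)$, $C(-t)$, $\tilde G(q^{-1}t)$ must all lie in a single commutative subalgebra under $\star$ for ${\rm ln}$ of the product to split as the sum of the ${\rm ln}$'s. This follows because all the $\tilde G_n$ mutually commute, all the $C_n$ mutually commute (Corollary \ref{cor:com}), and by Corollaries \ref{prop:CGpoly} and \ref{lem:Poly} each $C_n$ is a $\star$-polynomial in the $\tilde G_k$ (and vice versa), so the two families generate the same commutative subalgebra; thus the three factors commute pairwise and the logarithm identity is valid. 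The remaining work is the purely formal coefficient extraction sketched above, using only the defining identity $\lbrack 2k\rbrack_q = (q^k+q^{-k})\lbrack k\rbrack_q$, which is routine.
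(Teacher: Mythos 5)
Your proposal is correct and follows essentially the same route as the paper: both take the $\star$-logarithm of the identity $\tilde G(qt)\star C(-t)\star \tilde G(q^{-1}t)=1$ from Proposition \ref{lem:GGC}, evaluate $\ln C(-t)$ via Corollary \ref{cor:xCyC}, and match coefficients using $\lbrack 2k\rbrack_q=(q^k+q^{-k})\lbrack k\rbrack_q$. Your explicit justification that the three factors lie in a common commutative subalgebra (so the logarithm splits additively) is a point the paper leaves implicit, and it is handled correctly without circularity.
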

\noindent We emphasize that in \eqref{eq:E1b} the exponential function  is with respect to the $q$-shuffle product, and the notation $xC_{k-1}y$ refers to the free product.
\begin{proof} Recall that $\tilde G_0=1$. Define the generating function 
$\tilde g(t)=\sum_{k=1}^\infty \tilde G_k t^k$ and note that $\tilde G(t)= 1+ \tilde g(t)$.
 We will be discussing the natural logarithm ${\rm ln}$ with respect to $\star$.
Define $h(t) = {\rm ln} \bigl(\tilde G(t)\bigr)$, and  note that
\begin{align*}
h(t) = {\rm ln} \bigl(1+ \tilde g(t)\bigr) = \tilde g(t) - \frac{\tilde g(t) \star \tilde g(t)}{2} + \frac{\tilde g(t)\star \tilde g(t) \star \tilde g(t)}{3} - \cdots
\end{align*}
We have ${\rm exp}\,h(t) = \tilde G(t)$. To establish \eqref{eq:E1b}, it suffices to show that
\begin{align}
h(t) = - \sum_{k=1}^\infty \frac{(-1)^k \lbrack k \rbrack_q}{k}  x C_{k-1}y t^k.
\label{eq:hk}
\end{align}
By construction, $h(t)$ has constant coefficient 0. 
Write $h(t)=\sum_{k=1}^\infty h_k t^k$ with $h_k \in U$ for $k\geq 1$.
Applying $\rm ln$ to each side of \eqref{eq:GGC}, we obtain
\begin{align}
\label{eq:three}
{\rm ln} \bigl(\tilde G(qt)\bigr) + {\rm ln}  \bigl(C(-t)\bigr) + {\rm ln}\bigl(\tilde G(q^{-1}t)\bigr) = 0.
\end{align}
By construction
\begin{align}
&{\rm ln} \bigl(\tilde G(qt)\bigr) = h(qt) = \sum_{k=1}^\infty h_k q^k t^k,
\label{eq:hq} \\
&{\rm ln} \bigl(\tilde G(q^{-1}t)\bigr) = h(q^{-1}t) = \sum_{k=1}^\infty h_k q^{-k} t^k.
\end{align}
By Corollary \ref{cor:xCyC},
\begin{align}
{\rm ln}  \bigl(C(-t)\bigr) = \sum_{k=1}^\infty \frac{(-1)^k \lbrack 2k\rbrack_q}{k} x C_{k-1} y t^k.
\label{eq:Ct}
\end{align}
Evaluating \eqref{eq:three} using \eqref{eq:hq}--\eqref{eq:Ct}, we obtain
\begin{align}
\label{eq:hsolve}
h_k (q^k +q^{-k})+ \frac{(-1)^k \lbrack 2k \rbrack_q}{k} x C_{k-1} y = 0, \qquad \qquad k\geq 1.
\end{align}
By \eqref{eq:hsolve} and $\lbrack 2k \rbrack_q = \lbrack k \rbrack_q (q^k+q^{-k})$, 
\begin{align*}
h_k = - \frac{(-1)^k \lbrack k \rbrack_q}{k} x C_{k-1} y, \qquad \qquad k\geq 1.
\end{align*}
This implies \eqref{eq:hk},
and the result follows.
\end{proof}

 \noindent Using \eqref{eq:E1b} the elements $\lbrace xC_ny \rbrace_{n=0}^\infty$ and the elements $\lbrace \tilde G_n \rbrace_{n=1}^\infty$ can be
 recursively obtained from each other. This is illustrated in Example  \ref{ex:xCyG}.
 
  \begin{corollary}
\label{prop:xCyGpoly}
For $n\geq 1$ the following hold in the $q$-shuffle algebra $\mathbb V$.
\begin{enumerate}
\item[\rm (i)] $xC_{n-1}y$ is a homogeneous polynomial in 
$\tilde G_1, \tilde G_2,\ldots, \tilde G_n$ that has total degree $n$,
where we view $\tilde G_k$ as having degree $k$ for $1 \leq k \leq n$;
\item[\rm (ii)] $\tilde G_n$ is a homogeneous polynomial in 
$xC_0y, xC_1y, \ldots, xC_{n-1}y$ that has total degree $n$, where we view
$xC_{k-1}y$ as having degree $k$ for $1 \leq k \leq n$.
\end{enumerate}
\end{corollary}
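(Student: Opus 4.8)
The plan is to read off both statements directly from the generating function identity \eqref{eq:E1b} of Proposition \ref{thm:GxCX}, exactly as in the proof of Corollary \ref{lem:Poly}. Abbreviate $f(t) = -\sum_{k=1}^\infty \frac{(-1)^k \lbrack k\rbrack_q}{k}\, xC_{k-1}y\, t^k$ and $\tilde g(t) = \sum_{k=1}^\infty \tilde G_k t^k$, so that \eqref{eq:E1b} becomes ${\rm exp}\,f(t) = 1 + \tilde g(t)$, with ${\rm exp}$ and the products taken relative to the $q$-shuffle product. Note that both $f(t)$ and $\tilde g(t)$ have constant coefficient $0$ and that the $t^k$-coefficient of $f(t)$ is a nonzero scalar multiple of $xC_{k-1}y$. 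Throughout, homogeneity is automatic from the $\mathbb V$-grading, since $xC_{k-1}y \in \mathbb V_{2k}$ and $\tilde G_k \in \mathbb V_{2k}$: any $\star$-monomial built from factors $xC_{k_1-1}y, \ldots, xC_{k_m-1}y$ lands in $\mathbb V_{2(k_1+\cdots+k_m)}$, which corresponds to total degree $k_1+\cdots+k_m$ under the degree convention of the statement.

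For part (ii) I would expand ${\rm exp}\,f(t) = \sum_{m\ge 0} \frac{f(t)^{\star m}}{m!}$ using the formal definition from Section~2 and compare the $t^n$-coefficients of the two sides of \eqref{eq:E1b}. The $t^n$-coefficient of ${\rm exp}\,f(t)$ is a sum, over all compositions $k_1+\cdots+k_m = n$ with $k_j \ge 1$, of scalar multiples of $(xC_{k_1-1}y)\star\cdots\star(xC_{k_m-1}y)$; each such $\star$-monomial is a product of the generators $xC_0y,\ldots,xC_{n-1}y$ and has total degree $n$. Hence $\tilde G_n$ is a homogeneous polynomial of total degree $n$ in $xC_0y,\ldots,xC_{n-1}y$, which is part (ii). (One may isolate the $m=1$ term to see that $xC_{n-1}y$ occurs with the nonzero coefficient $-\frac{(-1)^n\lbrack n\rbrack_q}{n}$, the remaining terms involving only $xC_{k-1}y$ with $k<n$.)

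For part (i) I would invert \eqref{eq:E1b} using the equivalence recorded in Section~2 that ${\rm exp}\,a(t) = 1 + b(t)$ if and only if $a(t) = {\rm ln}\bigl(1+b(t)\bigr)$; applying this gives $f(t) = {\rm ln}\bigl(1+\tilde g(t)\bigr)$. Expanding the logarithm by its Section~2 definition and comparing $t^n$-coefficients, the left-hand side contributes $-\frac{(-1)^n\lbrack n\rbrack_q}{n}\, xC_{n-1}y$, while the right-hand side is a homogeneous polynomial of total degree $n$ in $\tilde G_1,\ldots,\tilde G_n$. Solving for $xC_{n-1}y$ then yields part (i). The single point that must be checked — and which I expect to be the only real obstacle — is that the leading scalar is invertible: this holds because $q$ is not a root of unity, so $\lbrack n\rbrack_q \neq 0$ for every $n\ge 1$, and the division is legitimate. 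Everything else is formal manipulation of generating functions, and I would note that these manipulations are well behaved because the elements $\lbrace xC_ny\rbrace_{n\in\mathbb N}$ mutually commute with respect to $\star$ by Corollary \ref{cor:xcyCom}.
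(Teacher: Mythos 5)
Your proposal is correct and follows essentially the same route as the paper, whose proof is the one-line instruction to use \eqref{eq:E1b} and induction on $n$: you simply make the coefficient extraction explicit, reading part (ii) off the expansion of the exponential and part (i) off the logarithmic inverse, with the only substantive check being that $\lbrack n \rbrack_q \neq 0$. The remark about Corollary \ref{cor:xcyCom} is harmless but not needed, since the formal definitions of $\rm exp$ and $\rm ln$ in Section 2 do not require the coefficients to commute.
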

\noindent We emphasize that the above homogeneous polynomials are with respect to the $q$-shuffle product.
\begin{proof} Use \eqref{eq:E1b} and induction on $n$.
\end{proof}

\begin{corollary} 
\label{cor:CCG}
The following {\rm (i)--(iii)} coincide:
\begin{enumerate}
\item[\rm (i)] the subalgebra of the $q$-shuffle algebra $\mathbb V$
generated by $\lbrace C_n \rbrace_{n=1}^\infty$;
\item[\rm (ii)] the subalgebra of the $q$-shuffle algebra $\mathbb V$
generated by $\lbrace xC_ny \rbrace_{n=0}^\infty$;
\item[\rm (iii)] the subalgebra of the $q$-shuffle algebra $\mathbb V$
generated by $\lbrace \tilde G_n \rbrace_{n=1}^\infty$.
\end{enumerate}
\end{corollary}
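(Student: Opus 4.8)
The plan is to prove that the three subalgebras coincide by exploiting the polynomial-generation results that relate the three families $\lbrace C_n \rbrace$, $\lbrace xC_ny \rbrace$, and $\lbrace \tilde G_n \rbrace$ pairwise. The strategy is to show a chain of inclusions among the subalgebras and then close the loop, so that all containments become equalities. Let me write $\mathcal{S}_C$, $\mathcal{S}_{xCy}$, and $\mathcal{S}_{\tilde G}$ for the subalgebras in (i), (ii), (iii) respectively. Each of these is a subalgebra with respect to the $q$-shuffle product.

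First I would establish $\mathcal{S}_C = \mathcal{S}_{\tilde G}$. This follows immediately from Corollary \ref{prop:CGpoly}: part (i) says each $C_n$ is a $q$-shuffle polynomial in $\tilde G_1, \ldots, \tilde G_n$, which gives $C_n \in \mathcal{S}_{\tilde G}$ for all $n \geq 1$, hence $\mathcal{S}_C \subseteq \mathcal{S}_{\tilde G}$; and part (ii) says each $\tilde G_n$ is a $q$-shuffle polynomial in $C_1, \ldots, C_n$, giving the reverse inclusion $\mathcal{S}_{\tilde G} \subseteq \mathcal{S}_C$. Next I would establish $\mathcal{S}_{xCy} = \mathcal{S}_{\tilde G}$ in exactly the same manner, now invoking Corollary \ref{prop:xCyGpoly}: part (i) gives $xC_{n-1}y \in \mathcal{S}_{\tilde G}$ and hence $\mathcal{S}_{xCy} \subseteq \mathcal{S}_{\tilde G}$, while part (ii) gives $\tilde G_n \in \mathcal{S}_{xCy}$ and hence $\mathcal{S}_{\tilde G} \subseteq \mathcal{S}_{xCy}$. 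Combining these two equalities yields $\mathcal{S}_C = \mathcal{S}_{xCy} = \mathcal{S}_{\tilde G}$, which is exactly the assertion that (i), (ii), (iii) coincide.

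The one point requiring care is the indexing convention at $n=0$: the definitions include $C_0 = 1$, $\tilde G_0 = 1$, and $xC_0y = xy$, and the generating sets are stated with differing starting indices ($\lbrace C_n \rbrace_{n=1}^\infty$, $\lbrace xC_ny \rbrace_{n=0}^\infty$, $\lbrace \tilde G_n \rbrace_{n=1}^\infty$). Since $C_0 = \tilde G_0 = 1$ is the multiplicative identity, it lies in every subalgebra automatically and contributes nothing new; and the term $xC_0y = xy = \tilde G_1$ already appears among the $\tilde G$ generators. So the mismatch in starting indices is harmless, and I would remark on this explicitly so that the polynomial-generation lemmas, which are each stated for $n \geq 1$, are applied cleanly.

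I do not expect any serious obstacle here, since the substantive content has already been front-loaded into Corollaries \ref{prop:CGpoly} and \ref{prop:xCyGpoly}; the present corollary is a formal consequence of those two pairs of mutual polynomial expressions. The only thing to watch is to phrase the argument so that both directions of each inclusion are drawn from the correct part (i) versus part (ii) of the relevant corollary, and to confirm that ``homogeneous polynomial with respect to the $q$-shuffle product'' genuinely places the element inside the $q$-shuffle subalgebra generated by the stated variables — which it does by the definition of the subalgebra generated by a set.
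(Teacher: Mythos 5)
Your proof is correct and follows essentially the same route as the paper, whose entire proof is a citation of the polynomial-expression corollaries; the only cosmetic difference is that you use two of the three pairwise relations (Corollaries \ref{prop:CGpoly} and \ref{prop:xCyGpoly}) while the paper also cites Corollary \ref{lem:Poly}, which is logically redundant for the conclusion. Your attention to the indexing at $n=0$ is a harmless but welcome extra check.
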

\begin{proof} By Corollaries  \ref{lem:Poly},  \ref{prop:CGpoly}, \ref{prop:xCyGpoly}.
\end{proof}

\noindent For more information about the alternating words and related topics, see
\cite{basFMA, BK05, basnc, altCE, compactUqp, factorUq
}.

 \section{Acknowledgement}
 The author thanks Pascal Baseilhac for many discussions about $U^+_q$ and its PBW bases.

\section{Appendix A}
Recall the Catalan elements $\lbrace C_n\rbrace_{n\in \mathbb N}$ from Definition \ref{def:CnIntro}, and the alternating elements $\lbrace \tilde G_n \rbrace_{n \in \mathbb N}$ from
Definition \ref{def:WWGG}. Recall that $C_0=1$ and $\tilde G_0=1$.
In this appendix we compare   $\lbrace x C_n y \rbrace_{n=0}^3$  and  $\lbrace C_n \rbrace_{n=1}^4$  and    $\lbrace \tilde G_n\rbrace_{n=1}^4$.
\medskip

\noindent 
First  we display $xC_ny$ for $0 \leq n \leq 3$.
 \begin{example}\label{ex:xCy} \rm We have
 \begin{align*}
 &xC_0y = xy, \qquad
 x C_1 y = \lbrack 2 \rbrack_q xxyy, \qquad
 xC_2 y = \lbrack 2 \rbrack^2_q xxyxyy+ \lbrack 3 \rbrack_q \lbrack 2 \rbrack^2_q xxxyyy,
 \\
 & xC_3 y = 
 \lbrack 2 \rbrack^3_q xxyxyxyy+
 \lbrack 3 \rbrack_q \lbrack 2 \rbrack^3_q xxxyyxyy+
 \lbrack 3 \rbrack_q \lbrack 2 \rbrack^3_q xxyxxyyy
 \\&\qquad \qquad +
 \lbrack 3 \rbrack^2_q \lbrack 2 \rbrack^3_q xxxyxyyy+
 \lbrack 4 \rbrack_q \lbrack 3 \rbrack^2_q \lbrack 2 \rbrack^2_q xxxxyyyy.
 \end{align*}
 \end{example}
 \noindent Next we compare $\lbrace x C_ny \rbrace_{n=0}^3$ and $\lbrace C_n \rbrace_{n=1}^4$.
 
 \begin{example}\label{ex:xCyC} \rm We have
 \begin{align*}
x C_0 y &= \frac{C_1} {\lbrack 2 \rbrack_q},
\qquad \quad 
x C_1 y = \frac{ 2 C_2 - C_1 \star C_1}{\lbrack 4 \rbrack_q},
\\
x C_2 y &= \frac{ 3 C_3 - 3 C_2 \star C_1 +C_1 \star C_1 \star C_1}{\lbrack 6 \rbrack_q},
\\
x C_3 y &= \frac{ 4C_4 -4 C_3 \star C_1-2 C_2 \star C_2 + 4 C_2 \star C_1 \star C_1 -C_1 \star C_1 \star C_1 \star C_1}{\lbrack 8 \rbrack_q}.
\end{align*}
\noindent Moreover
 \begin{align*} 
 C_1 &= \lbrack 2 \rbrack_q xC_0 y,
 \qquad \quad 
 C_2 =\frac{ \lbrack 4 \rbrack_q x C_1 y + \lbrack 2 \rbrack^2_q (x C_0 y) \star (x C_0 y)} {2},
 \\
 C_3 &= \frac{ 2\lbrack 6 \rbrack_q x C_2 y + 3 \lbrack 2 \rbrack_q \lbrack 4 \rbrack_q (x C_1 y)\star (x C_0 y) + \lbrack 2 \rbrack^3_q (xC_0y) \star (xC_0y)\star (xC_0 y)}{6},
 \\
 C_4 &= \frac{6 \lbrack 8 \rbrack_q x C_3 y + 8 \lbrack 6\rbrack_q \lbrack 2 \rbrack_q (xC_2 y) \star (xC_0y) + 3 \lbrack 4 \rbrack^2_q (xC_1 y)\star (xC_1 y)}{24}
 \\
 &+ \frac{ 6 \lbrack 4 \rbrack_q \lbrack 2 \rbrack^2_q (xC_1 y)\star (xC_0 y)\star (xC_0 y) + \lbrack 2 \rbrack^4_q (xC_0 y)\star (x C_0 y) \star (x C_0 y) \star (x C_0 y)}{24}.
 \end{align*}

\end{example}

 \noindent Next we compare $\lbrace C_n \rbrace_{n=1}^4$ and $\lbrace \tilde G_n \rbrace_{n=1}^4$.
 \begin{example} \label{lem:CG} \rm
 We have 
  \begin{align*}
 &C_1 = \lbrack 2 \rbrack_q \tilde G_1,
 \qquad \quad 
 C_2 = \frac{\lbrack 2 \rbrack_q \lbrack 3 \rbrack_q \tilde G_1\star \tilde G_1 - \lbrack 4 \rbrack_q \tilde G_2}{\lbrack 2 \rbrack_q},
 \\
 &C_3 = \frac{\lbrack 2 \rbrack_q \lbrack 6 \rbrack_q  \tilde G_3 -(\lbrack 4 \rbrack^2_q + \lbrack 2 \rbrack^2_q \lbrack 5 \rbrack_q) \tilde G_2\star  \tilde G_1 + \lbrack 2 \rbrack_q \lbrack 3 \rbrack_q \lbrack 4 \rbrack_q \tilde G_1\star \tilde G_1 \star \tilde G_1}{\lbrack 2 \rbrack_q \lbrack 3 \rbrack_q},
 \end{align*}
  $C_4= \lbrack 2 \rbrack^{-1}_q \lbrack 3 \rbrack^{-1}_q \lbrack 4 \rbrack^{-1}_q$ times a weighted sum with the following terms and coefficients:
 \bigskip
 
 \centerline{
 \begin{tabular}{c|c}
{\rm term} &  {\rm coefficient}
\\
\hline
$\tilde G_4$ & $-\lbrack 2 \rbrack_q\lbrack 3 \rbrack_q \lbrack 8 \rbrack_q $ 
\\
$\tilde G_3 \star \tilde G_1$ & $\lbrack 2 \rbrack^2_q \lbrack 3\rbrack_q  \lbrack 7 \rbrack_q+ \lbrack 2 \rbrack_q \lbrack 5\rbrack_q \lbrack 6\rbrack_q $
\\
$\tilde G_2 \star \tilde G_2 $ & $\lbrack 3 \rbrack_q \lbrack 4 \rbrack_q\lbrack 6 \rbrack_q $
\\
$\tilde G_2 \star \tilde G_1 \star \tilde G_1 $ & $-\lbrack 2 \rbrack_q \lbrack 3 \rbrack^2_q  \lbrack 6 \rbrack_q-\lbrack 2 \rbrack^2_q \lbrack 5 \rbrack^2_q - \lbrack 4 \rbrack^2_q \lbrack 5 \rbrack_q$
\\
$\tilde G_1 \star \tilde G_1 \star \tilde G_1 \star \tilde G_1$ & $\lbrack 2 \rbrack_q \lbrack 3 \rbrack_q \lbrack 4 \rbrack_q\lbrack 5 \rbrack_q$
\end{tabular}
}
\bigskip

\noindent Moreover
 \begin{align*}
 & \tilde G_1 = \frac{C_1}{\lbrack 2 \rbrack_q},
 \qquad \quad 
 \tilde G_2 = \frac{\lbrack 3 \rbrack_q C_1\star C_1 - \lbrack 2 \rbrack^2_q C_2}{\lbrack 2 \rbrack_q \lbrack 4 \rbrack_q},
 \\
 &\tilde G_3 = \frac{\lbrack 2 \rbrack_q \lbrack 3 \rbrack_q \lbrack 4 \rbrack_q C_3 -(\lbrack 4 \rbrack^2_q + \lbrack 2 \rbrack^2_q \lbrack 5 \rbrack_q) C_2\star  C_1 + \lbrack 3 \rbrack_q \lbrack 5 \rbrack_q C_1\star C_1 \star C_1}{\lbrack 2 \rbrack_q \lbrack 4 \rbrack_q \lbrack 6 \rbrack_q},
 \end{align*}
  $\tilde G_4= \lbrack 2 \rbrack^{-1}_q \lbrack 4 \rbrack^{-1}_q \lbrack 6 \rbrack^{-1}_q \lbrack 8 \rbrack^{-1}_q$ times a weighted sum with the following terms and coefficients:
 \bigskip
 
 \centerline{
 \begin{tabular}{c|c}
{\rm term} &  {\rm coefficient}
\\
\hline
$C_4$ & $-\lbrack 2 \rbrack_q\lbrack 4 \rbrack^2_q \lbrack 6 \rbrack_q $ 
\\
$C_3 \star C_1$ & $\lbrack 2 \rbrack_q \lbrack 3\rbrack_q \lbrack 4\rbrack_q  \lbrack 7 \rbrack_q+ \lbrack 4 \rbrack_q \lbrack 5\rbrack_q \lbrack 6\rbrack_q $
\\
$C_2 \star C_2 $ & $\lbrack 2 \rbrack^2_q \lbrack 6 \rbrack^2_q $
\\
$C_2 \star C_1 \star C_1 $ & $-\lbrack 2 \rbrack^2_q \lbrack 5 \rbrack_q  \lbrack 7 \rbrack_q-\lbrack 3 \rbrack_q \lbrack 6 \rbrack^2_q - \lbrack 4 \rbrack^2_q \lbrack 7 \rbrack_q$
\\
$C_1 \star C_1 \star C_1 \star C_1$ & $\lbrack 3 \rbrack_q \lbrack 5 \rbrack_q \lbrack 7 \rbrack_q$
\end{tabular}
}
\bigskip

 \end{example}
 
 \noindent Next we compare $\lbrace x C_n y \rbrace_{n=0}^3$ and
 $\lbrace \tilde G_n\rbrace_{n=1}^4$.
\begin{example}\label{ex:xCyG} \rm We have
\begin{align*}
&xC_0y=\tilde G_1, \qquad \quad  xC_1y = \frac{\tilde G_1 \star \tilde G_1 - 2 \tilde G_2}{\lbrack 2 \rbrack_q}, \\
&xC_2 y = \frac{ \tilde G_1 \star \tilde G_1 \star \tilde G_1-3 \tilde G_1 \star \tilde G_2 + 3 \tilde G_3}{ \lbrack 3 \rbrack_q},
\\
& xC_3 y = \frac{\tilde G_1 \star \tilde G_1 \star \tilde G_1 \star \tilde G_1-4 \tilde G_1 \star \tilde G_1 \star \tilde G_2
 + 2 \tilde G_2 \star \tilde G_2+ 4 \tilde G_1 \star \tilde G_3-4 \tilde G_4}{\lbrack 4 \rbrack_q}.
\end{align*}
\noindent Moreover 
\begin{align*}
& \tilde G_1 = xC_0y, \qquad \quad \tilde G_2 = \frac{(xC_0y)\star (xC_0y) -\lbrack 2 \rbrack_q xC_1y}{2},
\\
& \tilde G_3 = \frac{ (xC_0y)\star (xC_0 y) \star (xC_0y)-3\lbrack 2 \rbrack_q (xC_0y)\star (xC_1y)+2 \lbrack 3 \rbrack_q xC_2 y}{6},
\\
& \tilde G_4 = \frac{(xC_0 y) \star (xC_0 y) \star (xC_0 y) \star (xC_0 y)-6\lbrack 2 \rbrack_q (xC_0y)\star (xC_0 y) \star (xC_1 y)}{24}
\\
&\qquad +\frac{ 3 \lbrack 2\rbrack^2_q (xC_1y) \star (xC_1 y) +8 \lbrack 3 \rbrack_q (xC_0 y)\star (xC_2 y)-6 \lbrack 4 \rbrack_q xC_3 y }{24}.
\end{align*}
\end{example}



%

\bigskip

\noindent Paul Terwilliger \hfil\break
\noindent Department of Mathematics \hfil\break
\noindent University of Wisconsin \hfil\break
\noindent 480 Lincoln Drive \hfil\break
\noindent Madison, WI 53706-1388 USA \hfil\break
\noindent email: {\tt terwilli@math.wisc.edu }\hfil\break

\end{document}